\documentclass[11pt]{amsart}

\usepackage{amsxtra,amssymb,amsthm,amsmath,amscd,url,xypic,mathrsfs,mathtools}
\usepackage[latin1]{inputenc}
\usepackage{fullpage}
\usepackage{supertabular}

\usepackage[
        colorlinks,
        backref,
]{hyperref}
\usepackage{url}
\numberwithin{equation}{section}

\usepackage{color}

\renewcommand{\leq}{\leqslant}
\renewcommand{\geq}{\geqslant}

\setcounter{tocdepth}{1}



\def\stacksum#1#2{{\stackrel{{\scriptstyle #1}}
{{\scriptstyle #2}}}}


\newcommand{\Cc}{\mathbf{C}}
\newcommand{\D}{\mathbf{D}}

\newcommand{\Zz}{\mathbf{Z}}

\newcommand{\calG}{\mathbf{G}}
\newcommand{\cO}{\mathcal{O}}
\newcommand{\cG}{\mathcal{G}}
\newcommand{\cT}{\mathcal{T}}
\newcommand{\Hh}{\mathbf{H}}
\newcommand{\Qq}{\mathbf{Q}}
\newcommand{\Fp}{\mathbf{F}}
\newcommand{\Tt}{\mathbf{T}}
\newcommand{\T}{\mathbf{T}}
\newcommand{\G}{\mathbf{G}}
\newcommand{\Gg}{\mathbf{G}}
\newcommand{\GG}{\mathbf{G}}

\newcommand{\Fbar}{\overline{\mathbf{F}}}

\newcommand{\Ee}{\mathbf{E}}

\newcommand{\proba}{\mathbf{P}}
\newcommand{\expect}{\mathbf{E}}

\newcommand{\charfun}{\mathbf{1}}


\newcommand{\ideal}[1]{\mathfrak{{#1}}}


\newcommand{\ra}{\rightarrow}
\newcommand{\lra}{\longrightarrow}

\newcommand{\fleche}[1]{\stackrel{#1}{\lra}}


\DeclareMathOperator{\Spec}{Spec}

\DeclareMathOperator{\Gal}{Gal}

\DeclareMathOperator{\Aut}{Aut}

\DeclareMathOperator{\Ad}{Ad}
\DeclareMathOperator{\Frob}{Frob}
\DeclareMathOperator{\GL}{GL}
\DeclareMathOperator{\SL}{SL}
\DeclareMathOperator{\Sp}{Sp}
\DeclareMathOperator{\SO}{SO}



\newcommand{\demi}{{\textstyle{\frac{1}{2}}}}




\DeclareMathSymbol{\gena}{\mathord}{letters}{"3C}
\DeclareMathSymbol{\genb}{\mathord}{letters}{"3E}



\def\kbar{\overline{k}}
\def\calT{\mathcal{T}}
\def\calP{\mathcal{P}}
\def\calG{\mathcal{G}}
\def\FF{\mathbf{F}}

\def\p{\mathfrak{p}}

\newenvironment{romanenum}{\hfill \begin{enumerate} }{\end{enumerate}}

\theoremstyle{plain}
\newtheorem{theorem}{Theorem}[section]

\newtheorem{lemma}[theorem]{Lemma}
\newtheorem{corollary}[theorem]{Corollary}

\newtheorem{proposition}[theorem]{Proposition}

\theoremstyle{remark}
\newtheorem{remark}[theorem]{Remark}

\theoremstyle{definition}

\newtheorem{example}[theorem]{Example}



\begin{document}

\title{Splitting fields of characteristic polynomials of random
  elements in arithmetic groups} 
  
  \author{F. Jouve}
\address{D\'epartement de Math\'ematiques\\
B\^atiment 425\\
Facult\'e des Sciences d'Orsay\\
Universit\'e Paris-Sud 11\\
F-91405 Orsay Cedex, France}
\email{florent.jouve@math.u-psud.fr}

\author{E. Kowalski}
\address{ETH Z\"urich - DMATH\\
  R\"amistrasse 101\\
  8092 Z\"urich\\
  Switzerland} \email{kowalski@math.ethz.ch}

\author{David Zywina}
\address{Department of Mathematics and Statistics, Queen's University,
Kingston, ON  K7L~3N6, Canada}
\email{zywina@mast.queensu.ca}
\urladdr{http://www.mast.queensu.ca/\~{}zywina}

\subjclass[2000]{11R32, 11N35, 11N36, 20G30 (Primary); 11C08, 60J10 (Secon\-dary)}
\keywords{Algebraic groups, arithmetic groups, Weyl group,
  characteristic polynomial, large sieve, random walk on finite
  groups}

\begin{abstract}
  We discuss rather systematically the principle, implicit in earlier
  works, that for a ``random'' element in an arithmetic subgroup of a
  (split, say) reductive algebraic group over a number field, the
  splitting field of the characteristic polynomial, computed using any
  faitfhful representation, has Galois group isomorphic to the Weyl
  group of the underlying algebraic group. Besides tools such as the
  large sieve, which we had already used, we introduce some
  probabilistic ideas (large deviation estimates for finite Markov
  chains) and the general case involves a more precise understanding
  of the way Frobenius conjugacy classes are computed for such
  splitting fields (which is related to a map between regular elements
  of a finite group of Lie type and conjugacy classes in the Weyl
  group which had been considered earlier by Carter and Fulman for
  other purposes; we show in particular that the values of this map
  are equidistributed).
\end{abstract}

\maketitle

\section{Introduction}
In earlier works, in particular~\cite{jkze8}, we have considered
particular cases of the following ``principle'': if $g$ is a
``random'' rational element in a connected split reductive group $\G$
over $\Qq$, embedded in some $\GL(m)$, then the splitting field of the
characteristic polynomial of $g$ should have Galois group isomorphic
to the Weyl group of $\G$.
\par
In this paper, we consider this question in much greater generality
than previously.  We are thus led to replace somewhat ad hoc arguments
with more intrinsic constructions, in particular in two areas: (1) in
characterizing the splitting field of the polynomials we construct,
which we relate to splitting fields of tori; (2) in the understanding
of the situation over finite fields, which is required for the sieve
argument we use to obtain strong bounds on the probability of having a
Galois group smaller than expected.  Moreover, to handle the reduction
to simply-connected groups, we need as input some ideas from Markov
chains (in particular, some large deviation estimates for finite
Markov chains).
\par
Let $k$ be a number field and denote by $\Zz_k$ its ring of integers.
Let $\G$ be a connected linear algebraic group defined over $k$.  We
may view it as a matrix group by fixing a faithful embedding
$\rho\colon \G \hookrightarrow \GL(m)$ defined over $k$.  For each
$g\in \G(k)$, let $k_g$ be the splitting field over $k$ of the
characteristic polynomial $\det(T-\rho(g)) \in k[T].$ The goal of this
paper is to describe the Galois group $\Gal(k_g/k)$ for a ``random''
$g$ in terms of the geometry of $\G$.
\par
We will only consider those $g$ belonging to a fixed arithmetic
subgroup $\Gamma$ of $\G$.  Recall that an \emph{arithmetic subgroup}
of $\G$ is a subgroup $\Gamma$ of $\G(k)$ for which $\rho(\Gamma)$ is
commensurable with $\rho(\G(k))\cap \GL(m,\Zz_k)$; this definition is
independent of $\rho$.  We shall assume that our arithmetic group
$\Gamma$ is Zariski dense in $\G$ (otherwise the structure of the
Galois groups $\Gal(k_g/k)$ should be governed by a smaller algebraic
group).
\par 
Our notion of ``random'' in this paper is to view $\Gamma$ as the
vertices of a Cayley graph and perform a long random walk on this
graph.  First choose a finite set $S$ that generates the group
$\Gamma$ (arithmetic groups are finitely generated, see~\cite[Th. 4.17
(2)]{platonov-rapinchuk}), such that $S$ is symmetric, i.e.,
$S=S^{-1}$.  We then have a \emph{Cayley graph} associated to the pair
($\Gamma$, $S$): the vertices are the elements of $\Gamma$ and there
is an edge connecting the vertices $g_1$ and $g_2 \in \Gamma$ if and
only if $g_1 g_2^{-1}$ belongs to $S$ (note we allow $1\in S$, in
which case the graph has self-loops at each vertex).  This graph is
regular of degree $|S|$.  Starting at the vertex $1\in \Gamma$ of our
graph, we take a random walk by repeatedly following one of the $|S|$
edges emanating from the current vertex with equal probability.  More
precisely, for each $n\geq 1$, we will choose a random element $s_n$
of $S$ (with uniform distribution); this gives a walk $X_0=1,\,
X_1=s_1,\, X_2=s_1s_2,\, X_3=s_1s_2s_3,\, X_4=s_1s_2s_3s_4, \ldots$.

\begin{theorem}\label{th-main-easy}
  Let $\G$ be a reductive group defined over a number field $k$, and
  fix a faithful representation $\rho\colon \G \hookrightarrow \GL(m)$
  defined over $k$.  Let $\Gamma\subseteq \G(k)$ be an arithmetic
  subgroup of $\G$ and assume that it is Zariski dense in $\G$.  Let
  $S$ be a finite symmetric set of generators for $\Gamma$ such that
  $1\in S$.  For any $w=(s_1,\ldots, s_n)\in S^n$, let $k_w/k$ be the
  splitting field of the characteristic polynomial
$$
\det(T-\rho(s_1\cdots s_n)) \in k[T]
$$
over $k$.  Then there is a finite group $\Pi(\G)$ which contains the
Weyl group $W(\G)$ as a normal subgroup such that the following hold: 
\begin{romanenum}
\item
The Galois group $\Gal(k_w/k)$ is always isomorphic to a
subquotient of $\Pi(\G)$.
\item
We have
$$
\lim_{n\to \infty} \frac{\big|\big\{ w=(s_1,\ldots, s_n)\in
  S^n\,:\,\Gal(k_w/k)\cong \Pi(\G)\big\}\big|}{|S^n|}= 1.
$$
\item If $\G$ is semisimple, then there exists a constant $c>1$ such
  that
$$
\frac{\big|\big\{ w=(s_1,\ldots, s_n)\in S^n\,:\,\Gal(k_w/k)\cong
  \Pi(\G)\big\}\big|}{|S^n|}= 1 + O(c^{-n})
$$
for all $n\geq 1$. 
\item Let $\kbar$ be an algebraic closure of $k$ and let $k_\G$ be the
  intersection of all the extensions $K\subseteq \kbar$ of $k$ for
  which $\G_K$ is split.  There exists a constant $c>1$ such that
$$
\frac{\big|\big\{ w=(s_1,\ldots, s_n)\in S^n\,: \, \Gal(k_\G k_w/k_\G)
  \cong W(\G)\big\}\big|}{|S^n|} = 1 + O(c^{-n})
$$
for all $n\geq 1$. 
\end{romanenum}
\par
The constants $c$ and the implied constants depend only on $\G$ and
the set $S$.
\end{theorem}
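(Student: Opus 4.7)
The plan is to first identify $\Pi(\G)$ explicitly, deduce (i) from the Galois theory of maximal tori, and then apply a large sieve on the Cayley graph to obtain (ii)--(iv). For the definition, if $g \in \G(k)$ is regular semisimple then its connected centralizer is a maximal torus $T_g$ defined over $k$; the splitting field $k_g$ of $\det(T-\rho(g))$ lies inside the splitting field $k_{T_g}$ of $T_g$, and the natural Galois action on $X^*(T_g)$ realizes $\Gal(k_{T_g}/k)$ as a subgroup of $\Pi(\G) := W(\G) \rtimes \Gal(k_\G/k)$, where $\Gal(k_\G/k)$ acts on $W(\G)$ through the outer action on the root datum coming from the $k$-form of $\G$. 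This gives (i), since a generic trajectory of the walk lands at a regular semisimple element by Zariski density of $\Gamma$ (non-regular elements will be controlled by the sieve and only contribute lower-order terms).

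For (iv) I would run a large-sieve argument on the Cayley graph, in the spirit of~\cite{jkze8}. Over $k_\G$ the group splits, so $\Gal(k_\G k_w/k_\G) \hookrightarrow W(\G)$; by Chebotarev, this is an equality as soon as the Frobenius conjugacy classes meet every conjugacy class of $W(\G)$. The structural input is a Carter--Fulman map $\phi_\pr \colon \G(\mathbf{F}_\pr)^{\mathrm{rss}} \to W(\G)^\#$ defined, for primes $\pr$ of $k_\G$ of good reduction, on regular semisimple elements of the finite group of Lie type, together with the identification of the Frobenius class of $\pr$ in $\Gal(k_\G k_w/k_\G)$ with $\phi_\pr(\bar X_w)$ whenever $\bar X_w$ is regular semisimple. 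The sieve then reduces matters to two uniform quantitative inputs: (a) $\phi_\pr$ equidistributes across conjugacy classes of $W(\G)$ with fiber-size lower bounds uniform in $\pr$; (b) the random walk equidistributes exponentially fast in $\G(\mathbf{F}_\pr)$ with spectral gap uniform in $\pr$. Input (a) is a refinement of Carter's class-number computations for finite groups of Lie type, while (b) is the expander property of Cayley graphs of $\G(\mathbf{F}_\pr)$ (Bourgain--Gamburd, Breuillard--Green--Tao), combined with strong approximation for $\Gamma$. The sieve inequality on the Cayley graph then produces the $O(c^{-n})$ bound in (iv).

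Part (iii) is obtained by repeating the same sieve argument over $k$ rather than $k_\G$: the Frobenius class now lies in $\Pi(\G)$, with $\Gal(k_\G/k)$-component determined by the splitting of $\pr$ in $k_\G$ and $W(\G)$-component given by a twisted version of the Carter--Fulman map, while the inputs (a), (b) are of the same shape. For (ii), the extra feature of a general reductive $\G$ is that the central torus $Z(\G)^\circ$ makes the quotient $\Gamma/\Gamma^{\mathrm{der}}$ a finitely generated abelian group on which the walk equidistributes only polynomially; the large-deviation estimates for finite Markov chains alluded to in the abstract are then used to guarantee that the walk remains, with probability tending to one, inside a region where the semisimple analysis of (iii) can be transferred through the isogeny to the simply-connected cover. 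Because the abelian-quotient rate is polynomial rather than exponential, only a $1-o(1)$ statement survives in (ii).

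The main obstacle, as I see it, is the verification of input (a): one needs lower bounds on $|\phi_\pr^{-1}(c)|$ which are uniform in $\pr$ and apply simultaneously to every conjugacy class $c$ of $W(\G)$, since the sieve must rule out every proper subgroup of $\Pi(\G)$ at once. A secondary difficulty is to couple the Markov-chain large-deviation argument in (ii) to the expander input on the simply-connected cover without losing the main term; this is where the careful passage between $\Gamma$ and its simply-connected analogue, unavoidable for the non-semisimple case, becomes technically delicate.
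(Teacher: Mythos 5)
Your overall strategy coincides with the paper's: relate $k_g$ to the splitting field of a maximal torus, define $\Pi(\G)$ from the Galois action on the character lattice, and run a large sieve driven by equidistribution of the Carter--Fulman map $\theta$ together with Jordan's lemma. But your argument for (i) has a genuine gap. Part (i) is a statement about \emph{every} word $w$, with no exceptional set, so you cannot dismiss non-regular elements as being ``controlled by the sieve'' and contributing ``lower-order terms''. The correct (and not difficult) argument is: for arbitrary $g\in\G(k)$ one has $k_g=k_{g_s}$, the semisimple part $g_s$ lies in \emph{some} maximal torus $\T$ defined over $k$ (not a unique one), the weights of $\rho|_{\T}$ generate $X(\T)$ because $\rho$ is faithful, hence $k_g\subseteq k_{\T}$, and $\Gal(k_\T/k)$ embeds into $\Pi(\G)$. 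Relatedly, $\Pi(\G)$ is in general only an \emph{extension} of $\Gal(k_\G/k)$ by $W(\G)$ --- the subgroup of $\Aut(X(\T))$ generated by $W(\G,\T)$ and the image of the Galois representation $\varphi_\T$ --- and nothing in your construction furnishes the splitting needed to write it as $W(\G)\rtimes\Gal(k_\G/k)$.

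The architecture of your sieve also misplaces two essential points. First, the Markov-chain large-deviation input is not what handles the central torus in (ii): it is already needed in the \emph{semisimple} case whenever $\G$ is not simply connected, because strong approximation (hence the independence of reductions at distinct primes that the large sieve requires) holds only for the finite-index subgroup $\Gamma^{sc}$. One must decompose the walk according to the cosets $\Gamma^{sc}\backslash\Gamma$, use a Chernoff-type bound for the induced finite Markov chain to show each coset is visited about $n/|\Gamma^{sc}\backslash\Gamma|$ times, and --- crucially --- prove the equidistribution of $\theta$ on each coset of $\varphi(\G^{sc}(\Fp_q))$ in $\G(\Fp_q)$ (via the connecting map to $H^1(\Fp_q,\pi_1)$), not merely on all of $\G(\Fp_q)$; your input (a) as stated is the unrefined version and would not feed correctly into the coset-wise sieve. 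Second, the loss of exponential decay in (ii) does not come from that Markov-chain step (which costs only another exponential); it comes from the non-concentration estimate on the exceptional subvariety of non-regular elements when $\G$ has a positive-dimensional central torus: the walk projected to the finite-rank abelian group coming from that torus avoids a proper subvariety only with polynomially small failure probability, exactly as in the non-split-torus example inside $\SL_2(\Zz)$. Two smaller points: for arithmetic (finite-index) $\Gamma$ the spectral input is property $(\tau)$ for congruence subgroups (Clozel), the expansion machinery of Bourgain--Gamburd and Breuillard--Green--Tao being required only for thin subgroups; and the identification of the Frobenius class at $\p$ with $\theta$ of the reduction --- including the fact that the relevant Galois representation is unramified at $\p$ --- is a statement that itself requires proof via a comparison of maximal tori over $\cO_\p$ and over $\Fp_\p$, not a definition one may simply invoke.
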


We shall explicitly describe the group $\Pi(\G)$ in \S\ref{sec-tori}.
If we assume that $\G$ is split, then $k_{\G}=k$ and $\Pi(\G)=W(\G)$.
See Theorem~\ref{th-main2} for a more general version where we allow
different distributions of the steps $s_n$ and a general connected
linear algebraic group $\G$ over $k$.

\begin{example}
  Here are some illustrations of our theorem.
\par
(1) Let $\G$ be either $\SL(n)$ or $\Sp(2g)$ where $n\geq 2$ and
$g\geq 1$.  We may identify $\G$ as a matrix group via the natural
representation into $\GL(n)$ or $\GL(2g)$, respectively. Let $k=\Qq$
and take for $\Gamma$ the arithmetic subgroup $\SL(n,\Zz)$ or
$\Sp(2g,\Zz)$ of $\G$, respectively. The Weyl groups are,
respectively, the symmetric group on $n$ letters and the group of
signed permutations on $g$ letters. In those cases (where $k_{\G}=\Qq$
and $\Pi(\G)=W(\G)$), Theorem~\ref{th-main-easy} was proved
in~\cite[Th. 7.12]{lsieve} when $k=\Qq$.
\par
(2) For an example with $\Pi(\G)\neq W(\G)$, let us take for $\G$ a
non-split form of the special orthogonal group $\SO(4)$ over
$\Qq$. Say, the group corresponds to the positive isometries of the
four-dimensional space endowed with the nondegenerate indefinite
quadratic form $Q(x_1,\ldots,x_4)=x_1^2+x_2^2-x_3^2-x_4^2$. Of course
$\G$ is split over $\Qq(i)$.
\par
The Weyl group of $\SO(4)$ is isomorphic to the Klein four
group. Indeed this group corresponds to the Weyl group of the root
system of type $D_2$. However, a ``generic'' $g\in \SO(4,\Zz)$ should
have a characteristic polynomial whose splitting field $k_g$ over
$\Qq$ has Galois group sitting in the exact sequence
 $$
 1\rightarrow W(\SO(4))\rightarrow \Gal(k_g/\Qq)\rightarrow
 \Gal(\Qq(i)/\Qq)\rightarrow 1\,.
  $$
  Therefore the ``right'' maximal Galois group is an extension of
  $\Zz/2\Zz$ by $W(\SO(4))$, and it is in fact the Weyl group of the
  root system of type $C_2$.
\par
(3) Parts (ii) and (iv) involve a subtlety that we overlooked in the
first version of this paper, and which was pointed out by
L. Rosenzweig: if $\G$ is reductive, and not semisimple, then in
general we can not claim that the convergence in (ii) occurs
exponentially fast (in contrast with (iii)). For instance, consider
$k=\Qq$, and take a hyperbolic element $g_0$ in $\SL(2,\Zz)$. Let $\G$
be the Zariski-closure of the infinite cyclic subgroup $g_0^{\Zz}$
generated by $g_0$, so that $\G$ is a non-split torus. Take also
$\Gamma=g_0^{\Zz}\subset \G(\Qq)$ and $S=\{g_0,1,g_0^{-1}\}$. Then for
$w=(s_1,\ldots,s_n)\in S^n$, $k_w$ can be either the quadratic field
generated by the eigenvalues of $g_0$, or $\Qq$ itself, the second
case happening exactly when $s_1\cdots s_n=1$. But if $s_i=g_0^{m_i}$
with $m_i\in\{-1,0,1\}$, the condition becomes $m_1+\cdots+m_n=0$, which
occurs with probability approximately $n^{-1/2}$ (by the Stirling
formula). 
\par
In the semisimple case our theorem provides exponential decay, in
terms of the ``length'' of the random walk, of the probability that
the Galois group is ``small''. In the general reductive case, one can
very likely also derive a general quantitative bound, though only with
polynomial decay, and it should be possible to characterize those
groups $\G$ for which one can recover exponential decay. 
\end{example}
   
\begin{remark}
(1)  There are some interesting connections between our results and ideas
  introduced by Prasad and Rapinchuk~\cite[\S 3]{prasad-rapinchuk} to
  study the relation of ``weak commensurability'' in arithmetic
  groups.
\par
(2) There are other ways to try to describe ``random'' elements in an
arithmetic group; we comment on these in Section~\ref{sec-comments},
and indicate in particular some interesting natural questions which
arise from the probabilistic construction we have chosen.
\end{remark}
 
The plan of the paper is as follows. In Section~\ref{sec-tori} we
analyze, in general, splitting fields of the type considered and
relate them with splitting fields of maximal tori in $\G$, which are
more intrinsic; this leads to a very general form of the a priori
inclusion which is part (i) of the theorem
above. Section~\ref{SS:reduction} is also of a preliminary nature and
discusses fairly standard facts on reduction of arithmetic groups
modulo primes. In Section~\ref{SS:finite fields}, we show that the
general construction, in this case, is closely related to earlier
results of Fulman~\cite{fulman} and Carter~\cite{carter-3}, and we
prove an equidistribution statement that will be useful for setting up
the sieve (and which is of independent interest). In
Section~\ref{sec-sieve}, we prove a general sieve result for
arithmetic subgroups of semisimple groups -- again, a result of
independent interest, where other deep ingredients come into play,
coming both from algebra (strong approximation results for arithmetic
groups) and from harmonic analysis (Property $(\tau)$).  Finally, in
Section~\ref{sec-final}, we combine the algebraic information with the
sieve result and some additional reduction steps in order to obtain
the general conclusion. In Section~\ref{sec-comments}, we compare our
approach with two other natural ways of quantifying the idea that
``random'' elements have the Weyl groups as Galois group.

\par
\medskip
\par
\textbf{Notation.} As usual, $|X|$ denotes the cardinality of a
set. 
For any integer $n\geq 1$, $\mathfrak{S}_n$ is the group of
permutations on $n$ letters.  For any group $G$, we denote by
$G^{\sharp}$ the set of conjugacy classes of $G$. We denote by $\Fp_q$
a field with $q$ elements. ``Connected'' will mean ``geometrically
connected'' for all algebraic groups considered.
By the \emph{type} of a connected reductive algebraic group $\G$
defined over a field $k$ (or a subring of $k$), we mean the
isomorphism type of its root datum over an algebraic closure of $k$
(see, e.g.,~\cite[\S 9.4]{springer}).
\par
By the Galois group of a polynomial, we mean the Galois group of its
splitting field. For a number field $k$, we denote by $\Zz_k$ the ring
of integers, and for $\ideal{p}$ a prime ideal of $\Zz_k$, we write
$\Fp_{\ideal{p}}$ for the residue field $\Zz_k/\ideal{p}\Zz_k$.
\par
For a scheme $X$ defined over a ring $A$ and a ring homomorphism $A\to
B$, we will denote the base extension $X\times_{\Spec A} \Spec B$ by
$X_B$. 
\par
\medskip
\par
\textbf{Acknowledgements.} Thanks to the referee for a thorough
reading, and thanks especially to L. Rosenzweig for both interesting
discussions related to this topic and for pointing out a serious
mistake in an earlier version.

\section{Splitting fields of tori and elements of algebraic groups}
\label{sec-tori}

In this section, we consider the Galois theory of splitting fields of
tori and elements in linear algebraic groups.  Throughout, let $\G$ be
a connected linear algebraic group defined over a perfect field $k$.

\subsection{Tori}

An algebraic group $\T$ over $k$ is a \emph{torus} if $\T_{\bar{k}}$
is isomorphic to $\GG^r_{\bar{k}}$ for an integer $r\geq 0$.

Fix a torus $\T$ defined over $k$.  We say that $\T$ is \emph{split}
if it is isomorphic over $k$ to $\GG_m^r$.  Denote by $X(\T)$ the
group of characters $\alpha\colon \T_{\bar{k}}\to \GG_{m,\bar{k}}$,
which is a free abelian group of rank equal to the dimension of $\T$.
There is a natural action of $\Gal(\kbar/k)$ on $X(\T)$ given by
$\sigma(\chi(t))={}^\sigma \chi(\sigma(t))$ for $\sigma\in
\Gal(\kbar/k)$, $\chi\in X(\T)$ and $t\in \T(\kbar)$.  Let
$k_{\T}\subseteq \kbar$ be the minimal extension of $k$ for which
$\Gal(\kbar/k_{\T})$ acts trivially on $X(\T)$; it is a finite Galois
extension of $k$ that we call \emph{the splitting field of $\T$}.  The
field $k_{\T}$ is also the minimal extension $K\subseteq \kbar$ of $k$
for which $\T_{K}$ is split.
  
Let $\varphi_\T \colon \Gal(\kbar/k)\to \Aut(X(\T))$ be the
representation describing the Galois action on $X(\T)$; we have
$\varphi_\T(\sigma) \chi = {}^\sigma\chi$ for all $\sigma\in
\Gal(\kbar/k)$ and $\chi \in X(\T)$.  It factors through an injective
homomorphism $\Gal(k_{\T}/k)\hookrightarrow \Aut(X(\T))$.

\subsection{Maximal tori}\label{galoistori}

Assume that $\G$ is reductive.  Let $\T$ be a maximal torus of $\G$,
defined over $k$ (we always consider maximal tori defined over the
base field).

In this section, we shall describe a finite subgroup $\Pi(\G)$ of
$\Aut(X(\T))$ that contains the image of $\varphi_\T$ and whose
isomorphism class depends only on $\G$.

Let $Z_\G(\T)$ and $N_\G(\T)$ be the
centralizer and normalizer, respectively, of $\T$ in $\G$.  The
\emph{Weyl group} of $\G$ with respect to $\T$, denoted $W(\G,\T)$, is
defined to be the $\kbar$-valued points of $N_\G(\T)/Z_\G(\T)$.  The
group $W(\G,\T)$ is finite. 

Conjugation induces an action of $W(\G,\T)$ on $\T$; for $w\in
W(\G,\T)$ represented by an element $n\in N_\G(\T)(\kbar)$, we have
$w\cdot t := n t n^{-1}$.  This action is faithful since $Z_\G(\T)=\T$
\cite[13.17 Corollary~2]{borel}.  The Weyl group $W(\G,\T)$ also acts
faithfully on $X(\T)$; for $\chi \in X(\T)$, $w\cdot \chi$ is the
character of $\T$ defined by $t\mapsto \chi(n^{-1} t n)$.  Using this
last action, we may identify $W(\G,\T)$ with a subgroup of
$\Aut(X(\T))$.

We define $\Pi(\G,\T)$ to be the subgroup of $\Aut(X(\T))$ generated
by $W(\G,\T)$ and $\varphi_\T(\Gal(\kbar/k))$.  Trivially, we have
$\varphi_\T(\Gal(\kbar/k)) \subseteq \Pi(\G,\T)$, so we may rewrite our
representation as
\[
\varphi_\T \colon \Gal(\kbar/k) \to \Pi(\G,\T).
\]
We will now show that the group $\Pi(\G,\T)$, up to isomorphism, is
independent of $\T$.

Let $\T_0$ be a fixed maximal torus of $\G$ defined over $k$.  Since
all maximal tori of $\G$ are conjugate over $\kbar$, there is an
element $x\in \G(\kbar)$ such that $\T_{\kbar} =
x\T_{0,\kbar}x^{-1}$. This gives isomorphisms $f\colon \T_{\kbar}
\xrightarrow{\sim} \T_{0,\,\kbar},$ $t \mapsto x^{-1} t x$ and
$F\colon X(\T) \xrightarrow{\sim} X(\T_0),$ $\chi \mapsto \chi \circ
f^{-1}$.

\begin{proposition}  \label{P:Weyl group}
With notation as above, the following hold:
\begin{romanenum}
\item
The Weyl group $W(\G,\T)$ is a normal subgroup of $\Pi(\G,\T)$.   
\item
 The map 
\begin{equation*} \label{E:Aut}
\Aut(X(\T))\xrightarrow{\sim} \Aut(X(\T_0)) 
\quad \gamma \mapsto F \circ \gamma \circ F^{-1}
\end{equation*} 
is an isomorphism which induces isomorphisms 
$$
\Pi(\G,\T)\xrightarrow{\sim} \Pi(\G,\T_0),\quad\quad
W(\G,\T)\xrightarrow{\sim} W(\G,\T_0).
$$
A different choice of $x$ gives the same isomorphisms up to
composition by an inner automorphism arising from an element of the
Weyl group.
\item Take $\sigma\in \Gal(\kbar/k)$ and let $w_{\sigma}$ be the
  element of $W(\G,\T)$ represented by $x^{-1}\sigma(x) \in
  N_\G(\T)(\kbar)$.  Then
\[
F\circ \varphi_\T(\sigma) \circ F^{-1} = w_{\sigma} \circ
\varphi_{\T_0}(\sigma).
\]
\item If $K\subseteq \kbar$ is an extension of $k$ for which $\G_{K}$
  is split, then $\varphi_\T(\Gal(\kbar/K))\subseteq W(\G,\T)$.
\end{romanenum}
\end{proposition}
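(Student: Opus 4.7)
The plan is to handle the four parts in the order (i), (iii), (ii), (iv), since (iii) is a direct computation that feeds into both (ii) and (iv), while (ii) requires the conjugation formula in (iii) to control the image of the generators $\varphi_\T(\Gal(\kbar/k))$.

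For (i), since $\Pi(\G,\T)$ is generated by $W(\G,\T)$ and $\varphi_\T(\Gal(\kbar/k))$, it suffices to show that Galois conjugation preserves the Weyl group. The normalizer $N_\G(\T)$ is defined over $k$ (as $\T$ is), so for $n\in N_\G(\T)(\kbar)$ we have $\sigma(n)\in N_\G(\T)(\kbar)$. Writing $w=[n]\in W(\G,\T)$, a direct unwinding of the definitions ${}^\sigma\chi(t)=\sigma(\chi(\sigma^{-1}(t)))$ and $(w\cdot\chi)(t)=\chi(n^{-1}tn)$ should give
\[
\varphi_\T(\sigma)\circ w\circ \varphi_\T(\sigma)^{-1} = [\sigma(n)]\in W(\G,\T),
\]
which is the normality statement.

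For (iii), set $f\colon\T_{\kbar}\to\T_{0,\kbar}$, $t\mapsto x^{-1}tx$, and $u=x^{-1}\sigma^{-1}(x)=\sigma^{-1}(x^{-1}\sigma(x))^{-1}$. A straightforward substitution using the definitions of $F$, $\varphi_\T(\sigma)$ and $\varphi_{\T_0}(\sigma)$ evaluates both sides of the desired identity at a character $\chi_0\in X(\T_0)$ and a point $t_0\in\T_{0,\kbar}$ to $\sigma(\chi_0(u\,\sigma^{-1}(t_0)\,u^{-1}))$, noting first that $x^{-1}\sigma(x)$ indeed lies in $N_\G(\T_0)(\kbar)$ because $\T_0$ is defined over $k$ and $\T_{\kbar}=x\T_{0,\kbar}x^{-1}$. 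The only care needed is bookkeeping with $\sigma$ versus $\sigma^{-1}$; no new ideas appear.

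For (ii), the map $\gamma\mapsto F\gamma F^{-1}$ is an isomorphism of automorphism groups by construction. To see that it sends $W(\G,\T)$ to $W(\G,\T_0)$, one checks that if $w=[n]$ with $n\in N_\G(\T)(\kbar)$, then $F\circ w\circ F^{-1}$ is the Weyl element $[x^{-1}nx]\in W(\G,\T_0)$; this is again a direct computation using the explicit forms of $F$ and of the Weyl action. Combined with (iii) it transports the other generators of $\Pi(\G,\T)$ into $\Pi(\G,\T_0)$, giving the isomorphism of $\Pi$'s. For the dependence on $x$: any other $x'$ with $\T_{\kbar}=x'\T_{0,\kbar}x'^{-1}$ satisfies $n_0:=x'^{-1}x\in N_\G(\T_0)(\kbar)$; a calculation shows $F'=w_0\circ F$ as maps $X(\T)\to X(\T_0)$, so the induced isomorphism differs from $F\cdot F^{-1}$ by conjugation by $w_0\in W(\G,\T_0)$, an inner automorphism coming from the Weyl group as claimed. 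Part (iv) then follows almost for free: choose $\T_0$ to be a $K$-split maximal torus of $\G_K$ (which exists by hypothesis), so that $\varphi_{\T_0}$ is trivial on $\Gal(\kbar/K)$; for $\sigma\in\Gal(\kbar/K)$, formula (iii) specializes to $F\varphi_\T(\sigma)F^{-1}=w_\sigma\in W(\G,\T_0)$, and transporting back by $F^{-1}$ (using (ii)) places $\varphi_\T(\sigma)$ in $W(\G,\T)$.

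No step involves a deep obstacle; the main hazard is purely notational, namely staying consistent with the convention $({}^\sigma\chi)(t)=\sigma(\chi(\sigma^{-1}(t)))$ and with the left-versus-right conventions in the definitions $w\cdot\chi$ and $f(t)=x^{-1}tx$. I would carry out (iii) first as an auxiliary lemma, and then recycle the identity $x^{-1}\sigma(x)\in N_\G(\T_0)(\kbar)$ throughout.
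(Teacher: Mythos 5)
Your proposal is correct and follows essentially the same route as the paper: the key identity throughout is the computation that $F\circ \varphi_\T(\sigma)\circ F^{-1}$ differs from $\varphi_{\T_0}(\sigma)$ by the Weyl element represented by $x^{-1}\sigma(x)$, which the paper obtains by the same evaluation on characters (proving (ii) and (iii) in one stroke), with (i) and (iv) handled identically. Your extra verification of the ``different choice of $x$'' claim via $n_0=x'^{-1}x\in N_\G(\T_0)(\kbar)$ is a welcome detail that the paper leaves implicit.
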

\begin{proof}
  \noindent (i) For $\sigma \in \Gal(\kbar/k)$ and $w\in W(\G,\T)$, we
  need to show that $\varphi_\T(\sigma)\circ w \circ
  \varphi_\T(\sigma)^{-1}$ belongs to $W(\G,\T)$.  For a character
  $\chi \in X(\T)$, we have $$(\varphi_\T(\sigma)\circ w \circ
  \varphi_\T(\sigma)^{-1})\chi = {}^\sigma\!(w\cdot
  {}^{\sigma^{-1}}\!\chi) = \sigma(w)\cdot \chi$$ where we are using
  the natural Galois action on the Weyl group.  Therefore,
  $\varphi_\T(\sigma)\circ w \circ \varphi_\T(\sigma)^{-1}= \sigma(w)$
  which does indeed belong to $W(\G,\T)$.

  \noindent (ii) The isomorphism of Weyl groups is easy to check; if
  $w\in W(\G,\T)$ has representative $n\in N_\G(\T)(\kbar),$ then
  $F\circ w \circ F^{-1}$ belongs to $W(\G,\T_0)$ with representative
  $x^{-1} n x$.  To verify that we have an isomorphism
  $\Pi(\G,\T)\xrightarrow{\sim} \Pi(\G,\T_0)$, it suffices to show
  that $F\circ \varphi_\T(\sigma) \circ F^{-1}$ belongs to
  $\Pi(\G,\T_0)$ for all $\sigma \in \Gal(\kbar/k)$.  For $\chi \in
  X(\T_0)$,
\begin{equation} \label{E:phi compatible}
(F\circ \varphi_\T(\sigma) \circ F^{-1})\chi 
= {}^\sigma\!(\chi \circ f) \circ f^{-1}
= {}^\sigma\!\chi \circ ( {}^\sigma\!f \circ f^{-1})
= {}^\sigma\!\chi \circ ( f\circ {}^\sigma\!f^{-1})^{-1}.
\end{equation}
The automorphism $f\circ {}^\sigma\!f^{-1}$ of $\T_{\kbar}$ maps an
element $t\in \T(\kbar)$ to $x^{-1}\sigma(x)\, t \,
(x^{-1}\sigma(x))^{-1}$ which equals $w_{\sigma}\cdot t$ where
$w_{\sigma}$ is the element of $W(\G,\T)$ represented by
$x^{-1}\sigma(x) \in N_\G(\T)(\kbar)$ (indeed, since $\T$ and $\T_0$
are both defined over $k$, the element $x^{-1}\sigma(x)$ normalizes
$\T$).  From (\ref{E:phi compatible}), we deduce that $F\circ
\varphi_\T(\sigma) \circ F^{-1} = w_{\sigma} \circ
\varphi_{\T_0}(\sigma)$ which certainly belongs to $\Pi(\G,\T_0)$.  We
have also proved (iii).

\noindent For (iv), we may assume that $\T_0$ was chosen such that
$k_{\T_0}\subseteq K$.  For $\sigma \in \Gal(\kbar/K)$, part (iii)
implies that $\varphi_\T(\sigma)=F^{-1} \circ w_\sigma \circ F$ which
is an element of $W(\G,\T)$ by (ii).
\end{proof}

The groups $W(\G,\T)$ and $\Pi(\G,\T)$ are, up to isomorphism,
independent of $\T$ (by Proposition~\ref{P:Weyl group}(ii)).  We shall
denote the abstract groups simply by $W(\G)$ and $\Pi(\G)$,
respectively, when the choice of torus is unimportant.  The
isomorphisms $\Pi(\G,\T)\xrightarrow{\sim} \Pi(\G,\T_0)$ and
$W(\G,\T)\xrightarrow{\sim} W(\G,\T_0)$ of Proposition~\ref{P:Weyl
  group} are unique up to composition with an inner automorphism by an
element of the Weyl group; hence they induce \emph{canonical}
bijections $W(\G,\T)^\sharp = W(\G,\T_0)^\sharp$ and $\Pi(\G,\T)^\sharp
= \Pi(\G,\T_0)^\sharp$ of conjugacy classes. The set $W(\G)^\sharp$ and
$\Pi(\G)^\sharp$ are thus completely unambiguous.

We define \emph{the splitting field of $\G$} to be the field $k_\G :=
\bigcap_{\T} k_\T$ where the intersection is over all maximal tori
$\T$ of $\G$.  In other words, $k_\G$ is the largest extension of $k$
that is contained in any $K\subseteq \kbar$ for which $\G_K$ is split.

\begin{lemma} \label{L:in Weyl group} For every maximal torus $\T$ of
  $\G$, we have $\varphi_{\T}(\Gal(\kbar/k_{\G})) \subseteq W(\G)$.
  In particular, $\Gal(k_\T/k_\G)$ is isomorphic to a subgroup of
  $W(\G)$.
\end{lemma}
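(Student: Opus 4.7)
The plan is to bootstrap from part~(iv) of Proposition~\ref{P:Weyl group} via a routine Galois-theoretic packaging of the definition of $k_\G$ as an intersection, together with the observation that $\G$ is actually split over each of the fields $k_{\T'}$ appearing in that intersection.

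First I would check that, for every maximal torus $\T'$ of $\G$ defined over $k$, the reductive group $\G$ becomes split over $k_{\T'}$. By definition of $k_{\T'}$, the torus $\T'_{k_{\T'}}$ is split; since a connected reductive group is split as soon as it admits a split maximal torus, $\G_{k_{\T'}}$ is split. Applying Proposition~\ref{P:Weyl group}(iv) with $K = k_{\T'}$ then gives the containment $\varphi_\T(\Gal(\kbar/k_{\T'})) \subseteq W(\G,\T)$ for every maximal torus $\T'$ of $\G$ over $k$ (including the one fixed in the statement).

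Next, since $k_\G = \bigcap_{\T'} k_{\T'}$ is an intersection of Galois extensions of $k$, the infinite Galois correspondence identifies $\Gal(\kbar/k_\G)$ with the closed subgroup of $\Gal(\kbar/k)$ topologically generated by $\bigcup_{\T'} \Gal(\kbar/k_{\T'})$. Because $W(\G,\T)$ is finite and therefore closed in $\Aut(X(\T))$, and because $\varphi_\T$ is continuous, the image $\varphi_\T(\Gal(\kbar/k_\G))$ lies in the closed subgroup generated by the $\varphi_\T(\Gal(\kbar/k_{\T'}))$; by the previous paragraph, this is contained in $W(\G,\T)$, proving the first assertion.

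For the ``in particular'' part, I use that $\varphi_\T$ factors through an injection $\Gal(k_\T/k)\hookrightarrow \Aut(X(\T))$. Since $k_\G \subseteq k_\T$, restriction yields an injection $\Gal(k_\T/k_\G) \hookrightarrow \Aut(X(\T))$ whose image is $\varphi_\T(\Gal(\kbar/k_\G))$, hence is contained in $W(\G,\T)\cong W(\G)$ by what was just shown. No step is really an obstacle: the algebraic content is entirely supplied by part~(iv) of the proposition, and the only mild point to verify is that $\G$ is indeed split over each $k_{\T'}$, which is immediate from the split-maximal-torus characterization of split reductive groups.
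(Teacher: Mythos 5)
Your proof is correct and follows essentially the same route as the paper: both arguments reduce the lemma to Proposition~\ref{P:Weyl group}(iv) applied with $K=k_{\T'}$ for each maximal torus $\T'$ (using that $\G_{k_{\T'}}$ is split because it has a split maximal torus), and then pass to the intersection $k_\G=\bigcap_{\T'}k_{\T'}$ by routine Galois theory. The only cosmetic difference is the packaging of that last step: the paper introduces the minimal field $K$ with $\varphi_\T(\Gal(\kbar/K))\subseteq W(\G,\T)$ (invoking normality of $W(\G)$ in $\Pi(\G)$), whereas you describe $\Gal(\kbar/k_\G)$ as the closed subgroup topologically generated by the $\Gal(\kbar/k_{\T'})$, which needs only that $W(\G,\T)$ is a subgroup.
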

\begin{proof}
  Let $K\subseteq \kbar$ be the minimal extension of $k$ for which
  $\varphi_{\T}(\Gal(\kbar/K)) \subseteq W(\G,\T)$ (this is
  well-defined since $W(\G)$ is a normal subgroup of $\Pi(\G)$).  For
  a maximal torus $\T_0$ of $\G$, Proposition~\ref{P:Weyl group}(iv)
  implies that $K\subseteq k_{\T_0}$. Since $\T_0$ was arbitrary, we
  deduce that $K\subseteq k_{\G}$.
\end{proof}

\subsection{Galois groups for elements} \label{SS:Galois groups for
  elements}
  
Choose a faithful representation $\rho\colon \G\hookrightarrow \GL(m)$
defined over $k$.  For $g\in \G(k)$, we define $k_g$ to be the
splitting field over $k$ of $\det(T-\rho(g))$.

Recall that each $g\in \G(k)$ equals $g_s g_u$ for unique commuting
elements $g_s, g_u \in\G(k)$ where $g_s$ is semisimple and $g_u$ is
unipotent.  Since
$\det(T-\rho(g))=\det(T-\rho(g)_s)=\det(T-\rho(g_s))$, we have
$k_{g}=k_{g_s}$.  The \emph{unipotent radical} $R_u(\G)$ of $\G$ is
the maximal connected unipotent normal subgroup of $\G$.
The quotient $\G/R_u(\G)$ is reductive and defined over $k$.   

\begin{lemma}   \label{L:kg facts}
\begin{romanenum}
\item The field $k_g$ does not depend on the choice of $\rho$.
\item Define the reductive group $\G':=\G/R_u(\G)$ and let $\pi\colon
  \G \to \G'$ be the quotient homomorphism.  Then $k_g=k_{\pi(g)}$ for
  all $g\in \G(k)$.
\end{romanenum}
\end{lemma}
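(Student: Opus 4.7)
My plan is to give an intrinsic description of $k_g$ in terms of characters of a maximal torus containing $g$, which will make both parts essentially formal. Since $\rho$ is a morphism of algebraic groups over the perfect field $k$, it respects Jordan decomposition, so $\rho(g)$ and $\rho(g_s)$ have the same characteristic polynomial and $k_g = k_{g_s}$. I may therefore assume $g \in \G(k)$ is semisimple, in which case it lies in some maximal torus $\T$ of $\G_{\kbar}$. Define
\[
F(g) := k\bigl(\chi(g) : \chi \in X(\T)\bigr) \subseteq \kbar.
\]
A quick check shows this does not depend on the choice of $\T$: any two maximal tori of $\G_{\kbar}$ containing $g$ are conjugate by some $c \in Z_\G(g)(\kbar)$, and the resulting isomorphism of character groups preserves values at $g$ because conjugation by $c$ fixes $g$.

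For (i), let $\chi_1,\ldots,\chi_m \in X(\T)$ be the weights of $\rho|_\T$ (with multiplicities). The eigenvalues of $\rho(g)$ are precisely the $\chi_i(g)$, so $k_g = k(\chi_1(g),\ldots,\chi_m(g)) \subseteq F(g)$. Conversely, the restriction $\rho|_\T$ is faithful (as $\rho$ is), and a representation of a split torus over $\kbar$ is faithful if and only if its weights generate the character group; hence any $\chi \in X(\T)$ can be written $\chi = \sum_i n_i \chi_i$ with $n_i \in \Zz$, so $\chi(g) = \prod_i \chi_i(g)^{n_i}$ lies in $k_g$ (the $\chi_i(g)$ being nonzero, their inverses are available). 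This yields $F(g) \subseteq k_g$, so $k_g = F(g)$ is manifestly independent of $\rho$.

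For (ii), the same Jordan reduction (applied also to $\pi$, which respects Jordan decomposition) lets me assume $g$ is semisimple, and it suffices to show $F(g) = F(\pi(g))$. The crucial geometric input is that $R_u(\G) \cap \T = 1$, since a torus contains no nontrivial unipotents; hence $\pi|_\T \colon \T \to \G'_{\kbar}$ is injective, and since $\G$ and $\G'$ have the same rank its image $\pi(\T)$ is a maximal torus of $\G'_{\kbar}$ containing $\pi(g)$, with $\pi|_\T$ an isomorphism onto it. Pullback then gives an isomorphism $X(\pi(\T)) \xrightarrow{\sim} X(\T)$, $\chi' \mapsto \chi' \circ \pi|_\T$, satisfying $(\chi' \circ \pi|_\T)(g) = \chi'(\pi(g))$; therefore $F(g) = F(\pi(g))$, and part (i) applied to both $\G$ and $\G'$ yields $k_g = F(g) = F(\pi(g)) = k_{\pi(g)}$. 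No individual step should be genuinely difficult; the one substantive input is the classical characterization of faithful torus representations used in the proof of (i).
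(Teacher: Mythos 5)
Your proof is correct, and it takes a genuinely different route from the paper's. The paper works with $\mathbf{D}$, the (not necessarily connected) diagonalizable subgroup of $\G$ generated by $g_s$, and characterizes $k_g$ as the splitting field of $\mathbf{D}$ by appealing to a result of Borel on diagonalizable groups; part (ii) then follows because $\mathbf{D}\cap R_u(\G)=1$ makes $\pi|_{\mathbf{D}}$ an isomorphism of $k$-groups. You instead pass to a maximal torus $\T$ of $\G_{\kbar}$ containing $g_s$ and prove the intrinsic formula $k_g=k(\chi(g_s):\chi\in X(\T))$, using that the weights of a faithful torus representation generate $X(\T)$; the same containment $\T\cap R_u(\G)=1$ then handles (ii). Both exploit the same geometric fact (a diagonalizable subgroup meets the unipotent radical trivially), but the paper minimizes the object it transports under $\pi$, while you maximize it. It is worth noting that your intrinsic characterization is essentially the paper's Lemma~\ref{lm-tg-dg}(ii), stated there for reductive $\G$ and proved separately; you have in effect proved that characterization over $\kbar$ for general $\G$ and folded it into the present argument, which is a perfectly sound alternative organization. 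One small remark: your justification that $\pi(\T)$ is a maximal torus of $\G'_{\kbar}$ (``since $\G$ and $\G'$ have the same rank'') is mildly circular as phrased, since rank-preservation is usually deduced from the fact that surjections send maximal tori onto maximal tori; it would be cleaner to just cite that standard fact directly.
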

\begin{proof}
  Let $\mathbf{D}$ be the algebraic subgroup of $\G$ generated by
  $g_s$.  The group $\mathbf{D}$ is \emph{diagonalizable}, i.e.,
  $\mathbf{D}_{\kbar}$ is isomorphic to a subgroup of some torus
  $\GG^r_{m,\kbar}$.  Let $K$ be the \emph{splitting field} of
  $\mathbf{D}$, that is, the smallest extension $K\subseteq \kbar$ of
  $k$ for which $\mathbf{D}_K$ is isomorphic to a subgroup of a split
  torus $\GG^r_{m,K}$.  By \cite[\S8.4]{borel}, we find that $K$ is
  also the smallest extension of $k$ such that a $\GL(m,K)$-conjugate
  of $\rho(\mathbf{D}_K)$ is contained in the diagonal subgroup of
  $\GL(m)$.  Equivalently, $K$ is the smallest extension of $k$ for
  which $\rho(g_s)=\rho(g)_s$ is $\GL(m,K)$-conjugate to a diagonal
  matrix.  Therefore, $K=k_g$ and part (i) follows since our
  description of $K$ does not depend on $\rho$.

  Let $\mathbf{D}'$ be the algebraic subgroup of $\G'$ generated by
  $\pi(g)_s=\pi(g_s)$.  We have $\mathbf{D}\cap R_u(\G)=1$ since the
  only semisimple and unipotent element is 1.  Therefore,
  $\pi|_{\mathbf{D}}\colon \mathbf{D}\to \mathbf{D}'$ is an
  isomorphism of algebraic groups.  Since $\mathbf{D}$ and
  $\mathbf{D}'$ are isomorphic, we must have $k_g=k_{\pi(g)}$.
\end{proof}

Recall that a semisimple $g\in \G(k)$ is \emph{regular} in $\G$ if it
is contained in a unique maximal torus; we shall denote this maximal
torus by $\T_g$.  For a semisimple and regular $g\in \G(k)$, we define
\[
\varphi_g \colon \Gal(\kbar/k) \to \Pi(\G)
\]
to be the representation denoted by $\varphi_{\T_g}$ in the previous
section. The representation $\varphi_g$ is uniquely defined up to an
inner automorphism by an element of $W(\G)$.

We will now relate the fields $k_g$ to the Galois extensions arising
from maximal tori of $\G$.

\begin{lemma}\label{lm-tg-dg} Assume that $\G$ is reductive.
\begin{romanenum}   
\item For all $g\in \G(k)$, $\Gal(k_g/k)$ is isomorphic to a
  subquotient of $\Pi(\G)$ and $\Gal(k_\G k_g/k_\G)$ is isomorphic to
  a subquotient of $W(\G)$.
\item For $g\in \G(k)$, the field $k_g$ is the extension of $k$
  generated by the set $\{\chi(g_s): \chi\in X(\T)\}$ where $\T$ is a
  maximal torus of $\G$ containing $g_s$.
\item There is a closed subvariety $Y\subsetneq \G$ that is stable
  under conjugation by $\G$ such that if $g\in \G(k)-Y(k)$, then $g$
  is semisimple and regular in $\G$ and $k_g=k_{\T_g}$.
\end{romanenum}
\end{lemma}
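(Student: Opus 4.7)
The plan is to prove (ii) first, deduce (i) from it, and then argue (iii) geometrically via the Chevalley quotient.

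For (ii): every semisimple $g_s \in \G(k)$ lies in a maximal torus $\T$ of $\G$ defined over $k$ — take any maximal torus of the connected reductive $k$-group $Z_\G(g_s)^0$; it is a maximal torus of $\G$ and contains $g_s$ (since $g_s$ is central in $Z_\G(g_s)^0$). Over $\kbar$, decompose $\rho|_\T = \bigoplus_{i=1}^m \chi_i$ with $\chi_i \in X(\T)$. The eigenvalues of $\rho(g_s)$ are the values $\chi_i(g_s)$, so $k_g = k_{g_s}$ is the extension of $k$ they generate. Faithfulness of $\rho|_\T$ forces $\bigcap_i \ker \chi_i = 1$ in $\T$, so the $\chi_i$ generate $X(\T)$ as an abelian group, yielding $k_g = k(\chi(g_s) : \chi \in X(\T))$.

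For (i): with $\T \ni g_s$ as above, any $\sigma \in \Gal(\kbar/k_\T)$ acts trivially on $X(\T)$ and fixes the $k$-rational element $g_s$, so $\sigma(\chi(g_s)) = ({}^\sigma\chi)(\sigma(g_s)) = \chi(g_s)$; combined with (ii), this gives $k_g \subseteq k_\T$. Hence $\Gal(k_g/k)$ is a quotient of $\Gal(k_\T/k) \hookrightarrow \Pi(\G, \T) \cong \Pi(\G)$, proving the first claim. For the second, Lemma~\ref{L:in Weyl group} places $\varphi_\T(\Gal(\kbar/k_\G))$ inside $W(\G, \T)$, so $\Gal(k_\G k_\T/k_\G) \hookrightarrow W(\G)$; since $k_\G k_g \subseteq k_\G k_\T$, the group $\Gal(k_\G k_g/k_\G)$ is a quotient of this, hence a subquotient of $W(\G)$.

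For (iii): let $Y_1 \subsetneq \G$ be the complement of the regular semisimple locus, a proper closed conjugation-stable $k$-subvariety. Fix a maximal torus $\T_0$ of $\G$ defined over $k$ and consider
\[
Y_0 := \bigcup_{\alpha \in \Pi(\G, \T_0) \setminus \{1\}} \T_0^\alpha \subsetneq \T_0.
\]
Each $\T_0^\alpha$ is a proper closed subgroup (as $\alpha$ acts non-trivially on $X(\T_0)$); the union is $W(\G, \T_0)$-stable, and defined over $k$ because the subset $\Pi(\G, \T_0) \subseteq \Aut(X(\T_0))$ is Galois-stable under conjugation by $\varphi_{\T_0}$ (Proposition~\ref{P:Weyl group}(i)). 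Under the Chevalley morphism $\pi \colon \G \to \T_0 / W(\G, \T_0)$, the image of $Y_0 \cap \T_0^{\mathrm{reg}}$ is closed in $(\T_0/W)_{\mathrm{reg}}$, and its preimage in $\G_{rs}$ is a closed conjugation-stable $k$-subvariety; taking closure in $\G$ and unioning with $Y_1$ produces $Y \subsetneq \G$. For $g \in \G(k) \setminus Y(k)$, the element $g$ is regular semisimple with unique maximal torus $\T_g$, and some $\kbar$-conjugate of $g$ lies in $\T_0 \setminus Y_0$, i.e., is fixed by no non-trivial $\alpha \in \Pi(\G, \T_0)$. By Proposition~\ref{P:Weyl group}(ii)--(iii), the Galois action on $X(\T_g)$ transports via the canonical identification to a subset of $\Pi(\G, \T_0)$, so no non-trivial $\sigma \in \Gal(k_{\T_g}/k)$ fixes $g$. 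Together with the argument from (i), this forces $k_g = k_{\T_g}$.

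The main obstacle is in (iii): realizing $Y$ as an honest closed $k$-subvariety of $\G$ (not merely a constructible subset), and ensuring that the ``universal'' bad set $Y_0 \subseteq \T_0$ correctly detects the condition $k_g \neq k_{\T_g}$ for every maximal torus $\T_g$. Both points are handled by Proposition~\ref{P:Weyl group}: the canonical identifications of $\Pi(\G, \T_g)$ with $\Pi(\G, \T_0)$ (up to Weyl-inner automorphism) transport the Galois-fixation condition, while the Chevalley quotient supplies closedness and $k$-rationality of the conjugation sweep.
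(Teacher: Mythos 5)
Your parts (ii) and (i) follow the paper's proof essentially verbatim: decompose $\rho|_{\T}$ into weights, use faithfulness to see the weights generate $X(\T)$, and then use $\sigma(\chi(g))={}^{\sigma}\chi(g)$ to get $k_g\subseteq k_\T$ and conclude via the embedding $\Gal(k_\T/k)\hookrightarrow\Pi(\G)$ together with Lemma~\ref{L:in Weyl group}. Part (iii), however, takes a genuinely different route, and it works. The paper's bad locus is built from the representation: it fixes the weight multiset $\Omega_0$ of $\rho|_{\T_0}$ and excises the locus where two weight values coincide, so that for $\sigma\in\Gal(\kbar/k_g)$ the equality ${}^{\sigma}\chi(g)=\chi(g)$ forces ${}^{\sigma}\chi=\chi$ by injectivity of $\chi\mapsto\chi(g)$ on $\Omega$; the conjugation sweep is then done ``as in Steinberg''. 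You instead excise the union of fixed-point subtori $\T_0^{\alpha}$ for $1\neq\alpha\in\Pi(\G,\T_0)$ and sweep it out via the adjoint quotient $\G\to\T_0/W$. The key identity making this work is that $\sigma$ fixes $k_g$ pointwise exactly when the dual action of $\varphi_{\T_g}(\sigma)$ on $\T_g(\kbar)$ fixes $g$ (since characters separate points and the condition ${}^\sigma\chi(g)=\chi(g)$ extends multiplicatively from the weights to all of $X(\T_g)$), and your transport of the no-fixed-point condition through $F$ is compatible with this. Your construction buys representation-independence (your $Y$ is intrinsic to $(\G,\T_0,\Pi)$ and in general smaller than the paper's, since distinct characters can take equal values at $g$ without $g$ being fixed by any nontrivial element of $\Pi$), at the cost of invoking the Chevalley restriction theorem over $k$ for a possibly non-split group; the paper's version stays closer to the polynomial $\det(T-\rho(g))$ and to Steinberg's original argument. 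Two small imprecisions, neither fatal: the $k$-rationality of $Y_0$ really uses that $\Pi(\G,\T_0)$ is stable under conjugation by $\varphi_{\T_0}(\Gal(\kbar/k))$ (immediate, since $\Pi$ is a group containing that image), not the normality of $W$ from Proposition~\ref{P:Weyl group}(i); and ``no non-trivial $\sigma\in\Gal(k_{\T_g}/k)$ fixes $g$'' should be read as the dual action of $\varphi_{\T_g}(\sigma)$ on $\T_g(\kbar)$ fixing $g$, since the arithmetic action of $\sigma$ fixes the $k$-rational point $g$ trivially.
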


\begin{proof}
  We start with (ii).  Take $g\in \G(k)$.  Since $k_g=k_{g_s}$, we may
  assume that $g$ is semisimple.  Fix a maximal torus $\T$ containing
  $g$, and let $\Omega\subseteq X(\T)$ be the set of \emph{weights}
  arising from the representation $\rho|_{\T}\colon \T \hookrightarrow
  \GL(m)$.  There are positive integers $m_\chi$ such that 
$$
\det(T-\rho(t))= \prod_{\chi\in \Omega}(T-\chi(t))^{m_\chi}
$$ 
for all $t\in \T(\kbar)$, and in particular, $\{\chi(g):\chi\in
\Omega\}$ is the set of roots of $\det(T-\rho(g))$ in $\kbar$.  The
set $\Omega$ generates the group $X(\T)$ since the representation
$\rho|_{\T}\colon \T\to \GL(m)$ is faithful.  Therefore, we see that
the extension of $k$ generated by $\{\chi(g):\chi\in X(\T)\}$ is equal
to $k_g=k(\{\chi(g):\chi\in \Omega\})$, which  completes the proof of
(ii).
\par
Now we prove (i). For $\sigma\in \Gal(\kbar/k)$, we have
\begin{equation}\label{E:weights}
  \sigma(\chi(g))={}^\sigma\!\chi (\sigma(g))={}^\sigma\!\chi (g)
\end{equation}
for all $\chi\in \Omega$.  In particular, $\sigma(\chi(g))=\chi(g)$
for all $\sigma\in \Gal(\kbar/k_\T)$.  Since $k_g$ is generated over
$k$ by $\{\chi(g): \chi \in \Omega\}$, we deduce that $k_\T \supseteq
k_g$.  Part (i) follows, since in \S\ref{galoistori} we saw that
$\Gal(k_\T/k)$ was isomorphic to a subgroup of $\Pi(\G)$ and
$\Gal(k_\T/k_\G)$ was isomorphic to a subgroup of $W(\G)$.
\par
Now for (iii). First of all, there is a closed subvariety
$Y_1\subseteq \G$ such that $h\in \G(\kbar)$ does not belong to
$Y_1(\kbar)$ if and only if $h$ is semisimple and regular in
$\G_{\kbar}$, see~\cite[2.14]{steinberg} (the proof is given there
only for semisimple groups, but the reductive case follows easily from
the semisimple case by considering the morphism from $\G$ to
$\G/R_u(\G)$).
\par
Now fix a maximal torus $\T_0\subset \G$, and let $\Omega_0$ be the
set of weights of $\T_0$ with respect to $\rho$, as above.  The set
$$
V=\{t\in\T_0\,\mid\, \text{ the $\chi(t)$ are distinct for }
\chi\in\Omega_0\}
$$
is an open dense subset of $\T_0$. Arguing as
in~\cite[2.14]{steinberg}, it follows that the set $Y_2$ of those
$h\in \G$ where $g_s$ is conjugate in $\G$ to an element in $\T_0-V$ is
a proper subvariety of $\G$. Moreover, it is clearly invariant under
conjugation.
\par
Now we define the proper closed subvariety $Y=Y_1\cup Y_2$ of $\G$,
which is stable under conjugation, and we claim that (iii)
holds. Indeed, let $g\in \G(k)-Y(k)$.  Since $g\notin Y_1(k)$, it is a
regular semisimple element of $\G$. Let $\T_g$ be the unique maximal
torus containing $g$, $\Omega$ the set of weights with respect to
$\T_g$. Since $g\notin Y_2(k)$ and $\Omega$ is obtained from
$\Omega_0$ by conjugation, it follows that the values $\chi(g)$,
$\chi\in\Omega$, are all distinct. 
\par
But now, take any $\sigma\in \Gal(\kbar/k_g)$.  By (\ref{E:weights}),
we have ${}^\sigma\!\chi (g)=\chi(g)$ for all $\chi\in \Omega$, and
therefore we must have in fact ${}^\sigma\!\chi=\chi$ for all $\chi\in
\Omega$.  Since $\Omega$ generates the group $X(\T_g)$, we find that
$\sigma$ acts trivially on $X(\T_g)$, and since $\sigma$ was an
arbitrary element of $\Gal(\kbar/k_g)$, we deduce finally that
$k_g\supseteq k_{\T_g}$.
\end{proof}

\section{Reductions of arithmetic groups and tori over finite
  fields} \label{SS:reduction}

Let $\G$ be a connected semisimple group defined over a number field
$k$.  To consider reductions, we will need to choose a model of $\G$.
This means that we take a group scheme $\mathcal{G}$ over a ring
$\Zz_k[R^{-1}]$ whose generic fiber $\mathcal{G}_k$ is isomorphic to
$\G$, where $R$ is a finite set of maximal ideals of $\Zz_k$.  We
identify $\G$ with the generic fiber of $\mathcal{G}$.  Any two such
models will agree after possibly inverting more primes.  From now on,
$\p$ will denote a maximal ideal of $\Zz_k$.  Let $k_{\p}$ be the
completion of $k$ at the prime $\p$ and let $\mathcal{O}_{\p}$ be the
corresponding valuation ring.  The ring $\mathcal{O}_{\p}$ is a
discrete valuation ring with residue field $\Fp_{\p}$.

After possibly increasing $R$, we may assume that $\mathcal{G}$ is
semisimple and that all of its fibers have the same type.  For
background on general reductive groups, see \cite{demazure}; recall
that $\mathcal{G}$ is \emph{semisimple} if it is affine and smooth
over $\Zz_k[R^{-1}]$ and if the generic fiber $\mathcal{G}_k$ and
special fibers $\mathcal{G}_{\Fp_{\p}}$ ($\p\notin R$) are semisimple
in the usual sense.

Choose a maximal torus $\calT_0$ of $\calG$.  Let $\calP$ be the set
of maximal ideals $\p \notin R$ of $\Zz_k$ such that the tori
$\calT_{0,k_\p}$ and $\calT_{0,\FF_\p}$ are both split.

\begin{lemma} \label{L:conjugacy bijection} Let $\mathcal{N}_0$ be the
  normalizer of $\mathcal{T}_0$ in $\mathcal{G}$.  For each $\p\in
  \calP$, there is a unique bijection
\begin{equation*} \label{E:conjugacy bijection}
W(\calG_{k_\p})^\sharp \leftrightarrow  W(\mathcal{G}_{\Fp_{\p}})^\sharp
\end{equation*}
such that for $n\in \mathcal{N}_0(\mathcal{O}_\p)$ the image of $n$ in
$W(\calG_{k_\p},\mathcal{T}_{0,k_\p})^\sharp$ and
$W(\calG_{\Fp_\p},\mathcal{T}_{0,\Fp_\p})^\sharp$ correspond.
\end{lemma}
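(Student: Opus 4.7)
The plan is to pass through the quotient group scheme $\mathcal{W}_0 := \mathcal{N}_0/\mathcal{T}_0$ over $\mathcal{O}_\p$, which is finite étale by standard results on reductive group schemes (SGA~3), and to show that when $\p \in \calP$ it is in fact the \emph{constant} étale group scheme associated to the abstract Weyl group $W$ attached to the root datum of $\mathcal{G}$. Once this is established, global sections over $\mathcal{O}_\p$, $k_\p$ and $\FF_\p$ all yield the same group $W$, giving canonical identifications
\[
W(\mathcal{G}_{k_\p}, \mathcal{T}_{0,k_\p}) = \mathcal{W}_0(k_\p) = \mathcal{W}_0(\mathcal{O}_\p) = \mathcal{W}_0(\FF_\p) = W(\mathcal{G}_{\FF_\p}, \mathcal{T}_{0,\FF_\p});
\]
passing to conjugacy classes and invoking the canonical identifications $W(\mathcal{G})^\sharp = W(\mathcal{G},\mathcal{T})^\sharp$ from Proposition~\ref{P:Weyl group}(ii) then produces the required bijection $W(\mathcal{G}_{k_\p})^\sharp \leftrightarrow W(\mathcal{G}_{\FF_\p})^\sharp$.

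The key step is showing that $\mathcal{T}_0$ is split over the whole of $\mathcal{O}_\p$. Its character lattice is a lisse étale sheaf on $\Spec(\mathcal{O}_\p)$, equivalently a continuous representation of $\pi_1^{\mathrm{et}}(\Spec \mathcal{O}_\p) = \Gal(k_\p^{\mathrm{unr}}/k_\p)$ on a free $\Zz$-module of finite rank. The restriction of this representation along the surjection $\Gal(\overline{k_\p}/k_\p) \twoheadrightarrow \Gal(k_\p^{\mathrm{unr}}/k_\p)$ recovers the representation attached to the generic fibre $\mathcal{T}_{0,k_\p}$, and this is trivial by the hypothesis $\p \in \calP$; surjectivity forces triviality of the original representation, so $\mathcal{T}_0 \cong \GG_m^r$ over $\mathcal{O}_\p$. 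The structure theory of split reductive group schemes then yields that $\mathcal{W}_0$ is the constant finite étale group scheme $\underline{W}$.

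To verify compatibility with elements $n \in \mathcal{N}_0(\mathcal{O}_\p)$ and uniqueness of the bijection, consider the short exact sequence of $\mathcal{O}_\p$-group schemes
\[
1 \longrightarrow \mathcal{T}_0 \longrightarrow \mathcal{N}_0 \longrightarrow \mathcal{W}_0 \longrightarrow 1.
\]
Since $\mathcal{T}_0 \cong \GG_m^r$ and $\mathcal{O}_\p$ is local, $H^1_{\mathrm{et}}(\mathcal{O}_\p, \mathcal{T}_0) = \Pic(\mathcal{O}_\p)^r = 0$, so the induced map $\mathcal{N}_0(\mathcal{O}_\p) \twoheadrightarrow \mathcal{W}_0(\mathcal{O}_\p) = W$ is surjective. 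By functoriality, for any $n \in \mathcal{N}_0(\mathcal{O}_\p)$ its image in $W$ equals the class of $n_{k_\p}$ in $W(\mathcal{G}_{k_\p}, \mathcal{T}_{0,k_\p})$ and also that of $n_{\FF_\p}$ in $W(\mathcal{G}_{\FF_\p}, \mathcal{T}_{0,\FF_\p})$. Since every element of $W$ (hence every conjugacy class) is realized by some such $n$, the bijection is uniquely determined by the stated compatibility.

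The main obstacle is the passage from splitness of the generic fibre to splitness over the whole DVR $\mathcal{O}_\p$, which is not assumed Henselian; this is the essential use of the hypothesis $\p \in \calP$, handled by the étale-fundamental-group argument in the second paragraph. Thereafter the proof is routine bookkeeping with the structure theory of split reductive group schemes (identifying $\mathcal{W}_0$ with the constant Weyl group) and the vanishing of $H^1$ for split tori over local rings.
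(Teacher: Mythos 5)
Your argument is correct, but it takes a genuinely different route from the paper's. The paper stays entirely at the level of groups of points: it notes that $\mathcal{N}_0(\mathcal{O}_\p)/\mathcal{T}_0(\mathcal{O}_\p)$ injects into $W(\calG_{k_\p},\mathcal{T}_{0,k_\p})$ via the generic fibre and surjects onto $W(\calG_{\Fp_\p},\mathcal{T}_{0,\Fp_\p})$ via Hensel's lemma applied to the smooth schemes $\mathcal{N}_0$ and $\mathcal{T}_0$, and then concludes that both maps are isomorphisms because the two Weyl groups have the same cardinality (the fibres of $\calG$ having the same type); uniqueness again comes from the surjectivity. You instead trivialize the whole Weyl group scheme $\mathcal{W}_0=\mathcal{N}_0/\mathcal{T}_0$ over $\mathcal{O}_\p$, which makes the identification of the two fibres immediate and replaces the Hensel-plus-cardinality step by the vanishing of $H^1_{\mathrm{et}}(\mathcal{O}_\p,\GG_m^r)$. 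Your approach buys a cleaner conceptual statement ($\mathcal{W}_0$ is the constant group scheme, so no counting is needed) at the price of heavier scheme-theoretic input from SGA~3, whereas the paper's argument only needs smoothness of $\mathcal{N}_0$. One small correction: $\mathcal{O}_\p$ is the valuation ring of the completion $k_\p$, so it \emph{is} complete, hence Henselian; your closing worry is moot, and in fact the identification $\pi_1^{\mathrm{et}}(\Spec\mathcal{O}_\p)=\Gal(k_\p^{\mathrm{unr}}/k_\p)$ on which your descent-of-splitness argument rests is precisely the statement that holds for a Henselian (here complete) discrete valuation ring and would fail for a general one.
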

\begin{proof}

  The homomorphism
  \begin{equation} \label{E:Weyl isom 1}
    \mathcal{N}_0(\mathcal{O}_{\p})/\mathcal{T}_0(\mathcal{O}_{\p})
    \hookrightarrow
    \mathcal{N}_0(k_{\p})/\mathcal{T}_0(k_{\p}) =
    W(\mathcal{G}_{k_{\p}},\mathcal{T}_{0,k_{\p}})
\end{equation}
is injective; the identification with the Weyl group uses that
$\mathcal{T}_{0,k_{\p}}$ is split.  The normalizer
$\mathcal{N}_0$ is a closed and smooth subscheme of
$\mathcal{G}$ (for smoothness, cf.~\cite[XXII~Corollaire
5.3.10]{SGA3}).  The homomorphisms
$\mathcal{N}_0(\mathcal{O}_{\p})\to
\mathcal{N}_0(\Fp_{\p})$ and
$\mathcal{T}_0(\mathcal{O}_{\p})\to \mathcal{T}_0(\Fp_{\p})$ are
surjective by Hensel's lemma.  We thus have a surjective homomorphism
\begin{equation} \label{E:Weyl isom 2}
  \mathcal{N}_0(\mathcal{O}_{\p})/\mathcal{T}_0(\mathcal{O}_{\p})
  \twoheadrightarrow
  \mathcal{N}_0(\Fp_{\p})/\mathcal{T}_{0,\Fp_{\p}}(\Fp_{\p})
  = W(\mathcal{G}_{\Fp_{\p}},\mathcal{T}_{0,\Fp_{\p}})
\end{equation}
where the equality uses that $\mathcal{T}_{0,\Fp_{\p}}$ is split.  The
Weyl groups $W(\mathcal{G}_{k_{\p}},\mathcal{T}_{0,k_{\p}})$ and
$W(\mathcal{G}_{\Fp_{\p}},\mathcal{T}_{0,\Fp_{\p}})$ are isomorphic
since $\mathcal{G}_{k_{\p}}$ and $\mathcal{G}_{\Fp_{\p}}$ have the
same type.  Since (\ref{E:Weyl isom 1}) and (\ref{E:Weyl isom 2}) are
injective and surjective, respectively, we deduce that they are both
isomorphisms. By combining them, we get an isomorphism
$$
W(\mathcal{G}_{k_{\p}},\mathcal{T}_{0,k_{\p}}) \xrightarrow{\sim}
W(\mathcal{G}_{\Fp_{\p}},\mathcal{T}_{0,\Fp_{\p}}).
$$
\par 
The desired bijection of conjugacy classes is induced from this
isomorphism.  The uniqueness is a consequence of the surjectivity of
(\ref{E:Weyl isom 2}).
\end{proof}

Fix a maximal ideal $\p \in \calP$ and choose an embedding
$\iota\colon \kbar\hookrightarrow \overline{k_\p}$ that is the
identity on $k$.  Using $\iota$, we can make an identification
$W(\G,\mathcal{T}_{0,k}) = W(\calG_{k_\p},\mathcal{T}_{0,k_\p})$.
Combining with the map of Lemma~\ref{L:conjugacy bijection}, we obtain
a bijection
\begin{equation} \label{E:conjugacy bijection 2}
W(\G)^\sharp \leftrightarrow  W(\mathcal{G}_{\Fp_{\p}})^\sharp
\end{equation}
that we will also use as an identification.  For an element
$g\in\calG(\FF_\p)$ that is semisimple and regular in
$\calG_{\FF_\p}$, we have a homomorphism
\[
\varphi_{g} \colon \Gal({\Fbar}_\p/\Fp_\p) \to W(\G)
\]
by using that $\calT_{0,\FF_\p}$ is split.  Let $\Frob_\p$ be the
Frobenius automorphism $x\mapsto x^{N(\p)}$ of $\Fbar_\p$ where
$N(\p)$ is the cardinality of $\FF_\p$.  The representation
$\varphi_g$ (up to inner automorphism) is determined by the conjugacy
class $\varphi_g(\Frob_\p)$ of $W(\G)$.
\par
The following crucial proposition shows that the local and global
images of Frobenius automorphisms coincide.

\begin{proposition}[Local and global Frobenius]\label{P:Frobenius
    comparison 1}
  Let $\p\in \calP$ be a prime ideal.  Let $g\in
  \mathcal{G}(\Zz_k[R^{-1}])$ be an element such that $g$ is
  semisimple and regular in $\G=\mathcal{G}_{k}$ and $\bar{g}:= g
  \bmod{\p} \in \mathcal{G}(\Fp_\p)$ is semisimple and regular in
  $\calG_{\Fp_\p}$.  Let $C$ be the conjugacy class of $W(\G)$
  containing $\varphi_{\bar{g}}(\Frob_{\p})$.  Then the representation
  $\varphi_g$ is unramified at $\p$, and if $\sigma_{\p}\in
  \Gal(\bar{k}/k)$ denotes a Frobenius element at $\p$, we have
$$
\varphi_g(\sigma_\p) \in C.
$$
\end{proposition}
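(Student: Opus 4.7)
The plan is to extend the torus $\T_g$ integrally to a torus scheme $\calT_g$ over $\mathcal{O}_\p$ by taking the centralizer of $g$ in $\calG_{\mathcal{O}_\p}$; this integral torus will also have $\T_{\bar g}$ as its special fibre. Unramifiedness of $\varphi_g$ at $\p$ will follow immediately because every torus over $\mathcal{O}_\p$ splits over an unramified extension. The Frobenius comparison will then be an unwinding of Proposition~\ref{P:Weyl group}(iii), applied to a conjugator $x \in \calG(\mathcal{O}_\p^{\mathrm{unr}})$ between $\calT_0$ and $\calT_g$ that we obtain by Hensel-lifting a mod-$\p$ conjugator.

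First I would show, possibly after enlarging $R$, that $\calT_g := Z_{\calG_{\mathcal{O}_\p}}(g)$ is a smooth torus scheme over $\mathcal{O}_\p$ whose generic and special fibres are $\T_g$ and $\T_{\bar g}$. Since $g$ is regular semisimple on each fibre, the centralizer is a maximal torus fibrewise (cf.\ \cite{demazure}); constancy of fibre dimension plus smoothness of the universal centralizer over the regular semisimple locus of $\calG$ yield flatness, hence a genuine torus scheme over $\mathcal{O}_\p$. Any such torus splits over a finite unramified extension, so the $\Gal(\overline{k_\p}/k_\p)$-action on $X(\calT_{g,\overline{k_\p}})$ factors through the unramified quotient; pulled back via $\iota$, this is the restriction of $\varphi_g$ to the decomposition group at $\p$, which proves unramifiedness.

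Next I would exploit Proposition~\ref{P:Weyl group}(iii) with reference torus $\calT_0$. The transporter $\mathrm{Transp}_{\calG_{\mathcal{O}_\p}}(\calT_0,\calT_g)$ is a right torsor under the smooth group scheme $N_\calG(\calT_0)$, and hence is itself smooth over $\mathcal{O}_\p$. Pick any $\bar x \in \calG(\Fbar_\p)$ conjugating $\calT_{0,\Fbar_\p}$ to $\T_{\bar g, \Fbar_\p}$ and lift it via Hensel to an $x$ in the transporter over $\mathcal{O}_\p^{\mathrm{unr}}$; then view $x$ inside $\calG(\kbar)$ via $\iota$. Because $\calT_{0,k_\p}$ is split ($\p \in \calP$), the representation $\varphi_{\calT_0}$ is trivial on the decomposition group at $\p$, so Proposition~\ref{P:Weyl group}(iii) identifies $\varphi_g(\sigma_\p)$ with the class $w_{\sigma_\p}$ of $x^{-1}\sigma_\p(x)$ in $W(\G,\calT_0)$. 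Since $\calT_0$ and $\calT_g$ are defined over $k$, $\sigma_\p(x)$ also conjugates $\calT_0$ to $\calT_g$, so $x^{-1}\sigma_\p(x) \in N_\calG(\calT_0)(\mathcal{O}_\p^{\mathrm{unr}})$. Reducing modulo the maximal ideal yields $\bar x^{-1}\Frob_\p(\bar x)$, whose class in $W(\calG_{\FF_\p},\calT_{0,\FF_\p})$ is precisely $\varphi_{\bar g}(\Frob_\p)$. By construction of the bijection \eqref{E:conjugacy bijection 2} in Lemma~\ref{L:conjugacy bijection}, the two classes are identified, so $\varphi_g(\sigma_\p) \in C$.

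The step I expect to be the main obstacle is the integral one: verifying that the scheme-theoretic centralizer $Z_{\calG_{\mathcal{O}_\p}}(g)$ is a torus scheme over all of $\mathcal{O}_\p$ (and not just fibrewise a torus), for which one invokes smoothness of the regular semisimple locus and possibly enlarges $R$. The remaining manipulations, including threading the identifications of Weyl groups via $\calT_0$ and via $\calT_g$, are then direct consequences of Proposition~\ref{P:Weyl group}(iii) and Lemma~\ref{L:conjugacy bijection}, together with the smoothness of the transporter torsor.
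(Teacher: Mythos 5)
Your proposal follows essentially the same route as the paper: an integral maximal torus through $g$ over $\mathcal{O}_\p$, smoothness of the transporter from $\calT_0$, Hensel lifting of a mod-$\p$ conjugator to $\mathcal{O}_\p^{un}$, and Proposition~\ref{P:Weyl group}(iii) combined with Lemma~\ref{L:conjugacy bijection} to match the two Frobenius classes (the paper obtains the integral torus by citing~\cite[XIII, Cor.~3.2]{SGA3} for the unique maximal torus containing a regular semisimple section, rather than via the centralizer). The only caveat is that for non--simply-connected $\G$ the scheme-theoretic centralizer of a regular semisimple element may be disconnected, so you should take its identity component (or use the SGA3 reference directly); this does not affect the rest of the argument.
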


This is intuitively very natural, but the proof requires some care in
our generality, in particular to show that the representation is
unramified. In previous works, this issue did not come up, since one
could explicitly control factorizations of the reduction of the
characteristic polynomial to ensure it was squarefree in suitable
conditions.
\par
Because of this proposition, we will, from now on, also denote by
$\Frob_{\p}$ any representative of the Frobenius automorphism in
$\Gal(\bar{k}/k)$. 

\begin{proof}
  Let $k_{\p}^{un}$ denote the maximal unramified extension of
  $k_{\p}$ in an algebraic closure $\overline{k_\p}$.  Let
  $\mathcal{O}_{\p}^{un}$ be the valuation ring of $k_\p^{un}$; its
  residue field is $\Fbar_{\p}$.  We have an isomorphism
$$
\Gal(k_\p^{un}/k_\p)\xrightarrow{\sim}\Gal(\overline{\Fp}_\p/\Fp_\p),
$$
which allows us to view $\Frob_\p$ as an automorphism of
$k_\p^{un}$. We will also denote by $\Frob_\p$ any extension of the
Frobenius automorphism to the field $\overline{k_\p}$

We now need to compare maximal tori in $\cG_{\cO_{\p}}$ and
$\cG_{\Fp_{\p}}$. First, we have the tori $\cT_{0,\cO_{\p}}$ (which we
will still denote $\cT_0$ for simplicity) and its reduction
$\cT_{0,\Fp_{\p}}$ modulo $\p$. Further, because we assume that $g$
(as element of $\cG(k_{\p})$, i.e., of the generic fiber of
$\cG_{\cO_{\p}}$) and $\bar{g}$ (as section of $\cG_{\cO_{\p}}$ over
the special fiber) are regular semisimple, there exists a unique
maximal torus $\cT$ of $\cG_{\cO_{\p}}$ containing $g$ (this is a
special case of~\cite[XIII, Cor. 3.2]{SGA3}).
\par
The transporter $\textrm{Transp}_{\mathcal{G}}(\mathcal{T}_0,
\mathcal{T})$, defined as an $\cO_{\p}$-scheme by
$$
\textrm{Transp}_{\mathcal{G}}(\mathcal{T}_0, \mathcal{T})(A) = \{ g
\in \mathcal{G}(A) : g\mathcal{T}_{0,A} g^{-1} = \mathcal{T}_{A} \},
$$
is a closed and smooth group scheme in $\mathcal{G}_{\mathcal{O}_\p}$
(for smoothness, cf.~\cite[XXII~Corollaire 5.3.10]{SGA3}).  
\par
Now we can choose an element $\overline{x} \in
\textrm{Transp}_{\mathcal{G}}(\mathcal{T}_0,
\mathcal{T})(\Fbar_{\p})$.  Since
$\textrm{Transp}_{\mathcal{G}}(\mathcal{T}_0, \mathcal{T})$ is smooth
and $\mathcal{O}_{\p}^{un}$ is a Henselian ring, there is an $x \in
\textrm{Transp}_{\mathcal{G}}(\mathcal{T}_0,
\mathcal{T})(\mathcal{O}_{\p}^{un})$ which lifts $\overline{x}$.
\par
Finally, by Proposition~\ref{P:Weyl group}(iii), the conjugacy class
of $\varphi_{\mathcal{T}_{k_{\p}}}(\Frob_{\p})$ in
$W(\mathcal{G}_{k_{\p}})^\sharp=W(\mathcal{G}_{k_{\p}},
\mathcal{T}_{0,k_{\p}})^\sharp$ is represented by $x^{-1}
\Frob_{\p}(x) \in
N_{\mathcal{G}_{k_{\p}}}(\mathcal{T}_{0,k_{\p}})$. Similarly, the
conjugacy class of
$\varphi_{\bar{g}}(\Frob_\p)=\varphi_{\mathcal{T}_{\Fp_{\p}}}(\Frob_{\p})$
in $W(\mathcal{G}_{\Fp_{\p}})^\sharp=W(\mathcal{G}_{\Fp_{\p}},
\mathcal{T}_{0,\Fp_{\p}})^\sharp$ is represented by $\overline{x}^{-1}
\Frob_{\p}(\overline{x}) \in
N_{\mathcal{G}_{\Fp_{\p}}}(\mathcal{T}_{0,\Fp_{\p}})$.  These
conjugacy correspond under the bijection of Lemma~\ref{L:conjugacy
  bijection}.  In particular,
$\varphi_{\mathcal{T}_{k_{\p}}}(\Frob_{\p})$ does not depend on the
choice of extension $\Frob_\p$; i.e., $\varphi_{\mathcal{T}_{k_{\p}}}$
is unramified at $\p$.  The choice of $\iota$ gives an inclusion
$\iota^*\colon \Gal(\overline{k_\p}/k_\p) \hookrightarrow
\Gal(\kbar/k)$.  The representation $\varphi_{\calT_{k_\p}}$ is equal
to $\iota^*$ composed with $\varphi_{g}$.  The proposition now follows
immediately.
\end{proof}

To show that $\varphi_{g}$ is often surjective, we will exploit the
well-known lemma of Jordan according to which, in a finite group, no
proper subgroup contains elements of all conjugacy classes.
Proposition~\ref{P:Frobenius comparison 1} will be used to produce
conjugacy classes of $W(\G)$ that intersect
$\varphi_g(\Gal(\kbar/k_\G))$.

\section{Semisimple elements and conjugacy classes in the Weyl group
  over finite fields} \label{SS:finite fields}

In this section, which can be read independently of the rest of the
paper, we consider a finite field $k=\Fp_{q}$ with $q$ elements and a
connected split semisimple group $\G$ defined over $\Fp_q$.  In
\S\ref{sec-tori}, we defined a homomorphism
\[
\varphi_{\T} \colon \Gal(\Fbar_q/\Fp_q) \to W(\G,\T)
\]
for each maximal torus $\T$ of $\G$ (the image lies in $W(\G,\T)$ by
Proposition~\ref{P:Weyl group}(iv)).  The representation
$\varphi_{\T}$ is determined by its value on the Frobenius
automorphism $F\colon x\mapsto x^{q}$, and $\varphi_{\T}(F)$ gives a
well-defined conjugacy class in $W(\G,\T)^\sharp=W(\G)^\sharp$ that we
shall denote by $\theta(\T)$.  
 
Now if $g\in \G(\Fp_{q})$ is a semisimple regular element of $\G$,
then it is contained in a unique maximal torus $\T_g$ of $\G$, and we
will study here the map
\begin{align*}
\theta \colon \left\{
\begin{array}{ccl}
\G(\Fp_{q})_{sr} & \to& W(\G)^\sharp,\\
 g& \mapsto& \theta(\T_g)=[\varphi_{\T_g}(F)]
\end{array}
\right.
\end{align*}
where $\G(\Fp_{q})_{sr}$ is the set of regular and semisimple elements
of $\G(\Fp_{q})$. From Proposition~\ref{P:Weyl group}(iii), it
follows that this can be described concretely as follows: we fix a split
maximal torus $\T_0\subset \G$, and then, given $g\in \G(\Fp_q)_{sr}$, let
$\T$ be the unique maximal torus containing $g$.  Take $y\in \G(\Fbar_q)$ such
that
$$
\T=y\T_0 y^{-1}.
$$
Then $\theta(g)$ is the class of $y^{-1}F(y)$ in $W(\G,\T_0)^{\sharp}=
W(\G)^{\sharp}$.
\par
Our goal is to prove that the values of this map are asymptotically
equidistributed, with respect to the natural measure on the conjugacy
classes of $W(\G)$, when $q$ goes to infinity (and the type of $\G$ is
fixed). 

\begin{proposition} \label{P:Carter approximation}
For each $C\in W(\G)^\sharp$, we have
\[
\frac{\big|\big\{g \in \G(\Fp_{q})_{sr}: \theta(g)=C\big\}\big|}
{|\G(\Fp_{q})|}= \frac{|C|}{|W(\G)|} (1 + O(q^{-1}))
\]
where the implicit constant depends only on the type of $\G$.
\end{proposition}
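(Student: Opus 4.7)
The plan is to reduce the count to a parametrization of maximal tori by Weyl conjugacy classes, and then to combine two elementary asymptotic counts. Fix a split maximal torus $\T_0$ of $\G$ and set $W=W(\G,\T_0)$. Since $\G$ is split, Frobenius $F$ acts trivially on $W$, so $F$-conjugacy coincides with ordinary conjugacy, and the standard Lang torsor argument yields a bijection
\[
\{\text{$\G(\Fp_q)$-conjugacy classes of $\Fp_q$-rational maximal tori}\} \longleftrightarrow W(\G)^\sharp,
\]
sending $\T = y\T_0 y^{-1}$ (for $y\in \G(\Fbar_q)$ with $y^{-1}F(y)\in N_\G(\T_0)(\Fbar_q)$) to the class of $y^{-1}F(y)$ in $W^\sharp$. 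By the explicit description of $\theta$ recalled before the statement, this is precisely $\theta(g)$ for any regular semisimple $g\in \T(\Fp_q)$.

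Given $C\in W(\G)^\sharp$, pick $w\in C$ and a torus $\T$ in the corresponding conjugacy class, and write $r=\rank\G$. I would partition
\[
\{g\in \G(\Fp_q)_{sr}:\theta(g)=C\} = \bigsqcup_{\T'} \T'(\Fp_q)_{sr},
\]
with $\T'$ running over $\Fp_q$-tori in the $\G(\Fp_q)$-conjugacy class of $\T$, and carry out two standard counts. First, by orbit-stabilizer, the number of such $\T'$ equals $|\G(\Fp_q)|/|N_\G(\T)(\Fp_q)|$; the exact sequence $1\to\T(\Fp_q)\to N_\G(\T)(\Fp_q)\to (N_\G(\T)/\T)^F\to 1$, combined with the fact that transporting back to $\T_0$ twists Frobenius on $W$ by conjugation by $w$, gives $(N_\G(\T)/\T)^F = C_W(w)$, whence $|N_\G(\T)(\Fp_q)| = |\T(\Fp_q)|\cdot |C_W(w)|$. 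Second, the standard determinant formula for a twisted torus gives $|\T(\Fp_q)| = q^r + O(q^{r-1})$ with implied constant depending only on the type, and the non-regular locus of $\T$ lies in the union of the finitely many root kernels, each of dimension $r-1$, so $|\T(\Fp_q)_{sr}| = |\T(\Fp_q)|(1+O(q^{-1}))$.

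Combining the two counts,
\[
|\{g\in \G(\Fp_q)_{sr}:\theta(g)=C\}| = \frac{|\G(\Fp_q)|}{|C_W(w)|}(1+O(q^{-1})),
\]
and dividing by $|\G(\Fp_q)|$ together with the orbit-stabilizer identity $|C|/|W| = 1/|C_W(w)|$ yields the stated estimate, with all implied constants controlled by the type of $\G$ (through $r$, the number of roots and the coefficients of the torus order polynomials). The main obstacle I anticipate is the bookkeeping in the first paragraph: verifying that the combinatorial map $\theta$, defined one regular semisimple element at a time via the recipe $g\mapsto [y^{-1}F(y)]$, really does agree with the Lang torsor parametrization of torus conjugacy classes, and that in the split case $F$-conjugacy collapses to conjugacy so that the target is $W(\G)^\sharp$ rather than a set of twisted conjugacy classes. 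Once this identification is secured, the remaining steps are uniform counting arguments over the isogeny type.
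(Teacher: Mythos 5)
Your proposal follows essentially the same route as the paper's proof: the bijection between $\G(\Fp_q)$-conjugacy classes of maximal tori and $W(\G)^\sharp$, orbit--stabilizer combined with Carter's identity $|N_\G(\T)(\Fp_q)/\T(\Fp_q)|=|C_W(w)|$ to get the exact equality $\sum_{\T}|\T(\Fp_q)|/|\G(\Fp_q)|=|C|/|W(\G)|$, and an $O(q^{r-1})$ bound on the non-regular locus of each torus. The only step you pass over is the one the paper works hardest on: to get $|\T(\Fp_q)_{sr}|=|\T(\Fp_q)|(1+O(q^{-1}))$ with a constant depending only on the type, one must bound the number of connected components of the root kernels $\ker\alpha$ (equivalently, the torsion of $X(\T)/\langle\Zz\alpha\rangle$) uniformly in terms of the root datum, and one must group the roots into $F$-stable subsets since an individual root need not be Frobenius-stable when $\T$ is non-split --- this is routine but does not follow from ``dimension $r-1$'' alone.
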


\begin{remark}
  If $\G$ is a simple and simply-connected group, Theorem~1
  of~\cite{carter} describes precisely the number of semisimple
  conjugacy classes of $\G(\Fp_q)$ mapping to $C$ under $\theta$ in
  terms of the geometry of the action of the so-called affine Weyl
  group on the cocharacter group of a maximal torus of $\G$. A proof
  of Proposition~\ref{P:Carter approximation} can then be derived
  fairly easily by a lattice-point counting technique, the well-known
  formula for the volume of the fundamental domain of the affine Weyl
  group, and some equidistribution of semisimple conjugacy classes.
  Our proof is different; it requires less precise information (and
  works for arbitrary connected semisimple groups), exploiting the fact
  that we only look for asymptotic information for large $q$.
\end{remark}

\begin{remark}
  This map has already been considered by Fulman~\cite{fulman} and
  Carter~\cite{carter-3} in the context of finite groups of Lie type.
  As they remark, it takes a very classical and concrete form when
  $\G=\SL(m)$. In that case, the Weyl group is the symmetric group
  $\mathfrak{S}_m$, and its conjugacy classes correspond naturally to
  partitions of the integer $m$. Now, consider an element $g\in
  \SL(m,\FF_q)$ which is regular and has distinct eigenvalues in
  $\Fbar_q$; in that case its characteristic polynomial
  $\det(T-g)\in\FF_q[T]$ is monic, squarefree and of degree $m$. We
  may factor it as a product of distinct irreducible factors
$$
\det(T-g)=\pi_1\cdots \pi_k
$$
and the degrees $d_i=\deg(\pi_i)$ form a partition $\lambda$ of $m$
(with as many cycles of length $j$, for $1\leq j\leq m$, as there are
factors of degree $d_i$ equal to $j$); then one can check that
$\theta(g)$ is the conjugacy class in $\mathfrak{S}_m$ corresponding
precisely to this partition. 
\end{remark}

\subsection{Proof of Proposition~\ref{P:Carter approximation}}

We will first need a few lemmas. The notation in this section is the
same as before.

\begin{lemma} \label{L:theta correspondence} 
The map $\T\mapsto
  \theta(\T)$ defines a bijection between the maximal tori of $\G$ up
  to conjugation by $\G(\Fp_q)$ and the conjugacy classes
  $W(\G)^\sharp$.
\end{lemma}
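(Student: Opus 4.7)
The plan is to fix a split $F$-stable maximal torus $\T_0 \subseteq \G$ (available since $\G$ is split) and to establish the bijection via two applications of Lang--Steinberg: one on $\G$ itself for surjectivity, and one on $\T_0$ for injectivity. Every maximal torus of $\G_{\Fbar_q}$ is $\G(\Fbar_q)$-conjugate to $\T_0$, so any $F$-stable maximal torus has the form $\T=y\T_0 y^{-1}$ for some $y\in\G(\Fbar_q)$, and $F$-stability forces $w:=y^{-1}F(y) \in N_\G(\T_0)(\Fbar_q)$. The element $y$ is determined only up to right multiplication by $N_\G(\T_0)(\Fbar_q)$, under which $w$ varies by $F$-twisted conjugation; projecting to $W=N_\G(\T_0)/\T_0$ and using that $F$ acts trivially on $W$ (because $\T_0$ is split), this becomes ordinary conjugation, so $[w]\in W^\sharp$ is intrinsic to $\T$ and coincides with $\theta(\T)$ by the concrete description recalled just before the lemma. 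Moreover, if $\T'=g\T g^{-1}$ with $g\in\G(\Fp_q)$, then replacing $y$ by $gy$ leaves $w$ unchanged (using $F(g)=g$), so $\theta$ factors through the set of $\G(\Fp_q)$-orbits.

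For surjectivity, given a class $C\in W^\sharp$, I would pick a representative $w\in N_\G(\T_0)(\Fbar_q)$ with $[w]\in C$, and apply Lang--Steinberg to the connected group $\G$ to produce $y\in\G(\Fbar_q)$ with $y^{-1}F(y)=w$. Then $\T:=y\T_0 y^{-1}$ is automatically $F$-stable (since $F(y)=yw$ and $w$ normalizes $\T_0$) and satisfies $\theta(\T)=C$ by construction.

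The main obstacle is injectivity. Assume $\T_i=y_i\T_0 y_i^{-1}$ are $F$-stable maximal tori with $\theta(\T_1)=\theta(\T_2)$, and set $w_i:=y_i^{-1}F(y_i)$. Exploiting the flexibility in $y_2$ described in the first paragraph, I may replace $y_2$ by $y_2 n$ for a suitable $n\in N_\G(\T_0)(\Fbar_q)$ so that $[w_1]=[w_2]$ in $W$, whence $w_1=w_2 t_0$ for some $t_0\in\T_0(\Fbar_q)$. I then search for a conjugator of the form $g:=y_2 m y_1^{-1}$ with $m\in\T_0$; for any such $m$ one has $g\T_1 g^{-1}=\T_2$ automatically, and the rationality condition $F(g)=g$ rewrites as $F(m)=w_2^{-1} m w_1=\sigma(m)\,t_0$, where $\sigma:=\Ad(w_2^{-1})$ is an algebraic automorphism of $\T_0$. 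Setting $u:=\sigma(m)$ turns this into the Lang equation $u^{-1}\Phi(u)=t_0$ on the connected torus $\T_0$, for the Steinberg endomorphism $\Phi:=F\circ\sigma^{-1}$; here one checks that $F$ and $\sigma$ commute on $\T_0$ because $F(w_2)w_2^{-1}\in\T_0$, which holds since $F$ acts trivially on $W$. Lang--Steinberg applied to $\T_0$ then yields a solution $u$, producing the required $g\in\G(\Fp_q)$ and completing the proof.
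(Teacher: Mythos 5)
Your argument is correct, but it is not the route the paper takes: the paper's proof of this lemma is essentially a citation of Carter's Proposition~3.3.3 (the standard parametrization of $F$-stable maximal tori up to $\G(\Fp_q)$-conjugacy by $F$-conjugacy classes in the Weyl group), plus the observation that $F$-conjugacy reduces to ordinary conjugacy because $\T_0$ is split and the identification $W(\G,\T)^\sharp=W(\G,\T_0)^\sharp$ via Proposition~\ref{P:Weyl group}(iii). What you have done is reprove the cited result from scratch, and your proof is the standard one, carried out correctly: the two applications of Lang--Steinberg (on $\G$ for surjectivity, on $\T_0$ with the twisted endomorphism $\Phi=F\circ\sigma^{-1}$ for injectivity) are exactly where the content lies, and you correctly identify the one delicate point, namely that $\Phi$ is a legitimate Steinberg endomorphism of $\T_0$, which follows because $F(w_2)w_2^{-1}\in\T_0(\Fbar_q)$ (triviality of the $F$-action on $W$ for a split torus) makes $F$ and $\sigma$ commute, so that a power of $\Phi$ is a power of $F$ and hence has finite fixed-point set. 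Your version buys self-containedness at the cost of length; the paper's version buys brevity at the cost of sending the reader to Carter and of having to explain why Carter's $F$-conjugacy classes coincide with the ordinary conjugacy classes appearing in the statement. Either is acceptable; if you keep yours, you should state explicitly the form of Lang--Steinberg you invoke for $\Phi$ (surjectivity of $u\mapsto u^{-1}\Phi(u)$ for a connected group with a surjective endomorphism having finite fixed-point group) and record the finite-order argument for $\sigma$ rather than leaving ``one checks'' implicit.
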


\begin{proof}
  This is \cite[Prop. 3.3.3]{carter} though stated a little
  differently.  First, fix a split maximal torus $\T_0$ of $\G$.  The
  action of $F$ on $W(\G,\T_0)$ is trivial since $\T_0$ is split, so
  $F$-conjugacy classes of $W(\G,\T_0)$ in \cite{carter} are the same
  as usual conjugacy classes.  The equivalence of our statement and
  Carter's then follows using $W(\G,\T)^\sharp=W(\G,\T_0)^\sharp$ and
  Proposition~\ref{P:Weyl group}(iii).
\end{proof}

We also recall that for any connected reductive group $\G/\Fp_q$, we
have
\begin{equation}\label{eq:card}
(q-1)^{\dim \G}\leq |\G(\Fp_q)|\leq (q+1)^{\dim\G},
\end{equation}
as follows from the formula of Steinberg for $|\G(\Fp_q)|$ (see,
e.g.,~\cite[p. 75, Prop. 3.3.5]{carter}).
\par
The next lemma is well-known, and essentially follows from Lang-Weil
estimates in our application, but since we think of this as a fact
about finite groups of Lie type, and not about reductions of groups
over numbers fields, we give the details (the argument is, in any
case, more elementary than the use of the Lang-Weil bounds).

\begin{lemma} \label{L:strongly regular bound} 
With notation as above, we have
$$
|\G(\Fp_{q})_{sr}| = |\G(\Fp_{q})|(1+O(q^{-1})),
$$ 
where the implicit constant depends only on the type of $\G$. 
\end{lemma}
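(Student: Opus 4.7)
My approach is to stratify $\G(\Fp_q)_{sr}$ according to the unique maximal torus containing each element, estimate elementarily within each torus, and conclude via a standard Weyl-group identity.

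First, since $\G$ is reductive, the centralizer of every regular semisimple element is a maximal torus, so each $g\in\G(\Fp_q)_{sr}$ is contained in a unique $\Fp_q$-maximal torus $\T_g=Z_\G(g)$; conversely, an element $t$ of an $\Fp_q$-maximal torus $\T$ belongs to $\G(\Fp_q)_{sr}$ iff $\alpha(t)\neq 1$ for every root $\alpha$ of $\G$ with respect to $\T_{\Fbar_q}$. Writing $\T(\Fp_q)^{reg}$ for this set and grouping the $\Fp_q$-maximal tori by $\G(\Fp_q)$-conjugacy (classes being indexed by $W(\G)^\sharp$ via Lemma~\ref{L:theta correspondence}), I would obtain
$$|\G(\Fp_q)_{sr}|=\sum_{c\in W(\G)^\sharp}\frac{|\G(\Fp_q)|}{|N_\G(\T_c)(\Fp_q)|}\,|\T_c(\Fp_q)^{reg}|,$$
where $\T_c$ is any representative of $c$.

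Next, I would estimate $|\T(\Fp_q)^{reg}|$ uniformly for an $\Fp_q$-maximal torus $\T$ of rank $r$. The irregular locus is the union of the kernels $\ker(\alpha)\subseteq\T$ as $\alpha$ ranges over the (finitely many, and in number depending only on the type of $\G$) roots. Each such kernel is a diagonalizable subgroup of codimension one, and using the surjectivity of $\alpha\colon\T\to\G_m$ (whose image on $\Fp_q$-points has cardinality at least $(q-1)/n_\alpha$ for a bounded integer $n_\alpha$) together with the lower bound $|\T(\Fp_q)|\geq(q-1)^r$ from \eqref{eq:card}, one gets $|\ker(\alpha)(\Fp_q)|=O(|\T(\Fp_q)|/q)$. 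Summing over roots then yields
$$|\T_c(\Fp_q)^{reg}|=|\T_c(\Fp_q)|(1+O(q^{-1}))$$
uniformly in $c$, with implicit constant depending only on the type of $\G$.

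Substituting, this gives
$$|\G(\Fp_q)_{sr}|=|\G(\Fp_q)|(1+O(q^{-1}))\sum_{c\in W(\G)^\sharp}\frac{1}{|N_\G(\T_c)(\Fp_q)/\T_c(\Fp_q)|}.$$
I would then invoke the classical identification, valid because $\G$ is split semisimple, that $N_\G(\T_c)(\Fp_q)/\T_c(\Fp_q)\simeq C_{W(\G)}(w)$ for any $w\in c$ (see e.g.\ \cite[Prop.~3.3.6]{carter}), and conclude via the elementary orbit-stabilizer identity
$$\sum_{c\in W(\G)^\sharp}\frac{1}{|C_{W(\G)}(c)|}=\sum_{c\in W(\G)^\sharp}\frac{|c|}{|W(\G)|}=1,$$
yielding $|\G(\Fp_q)_{sr}|=|\G(\Fp_q)|(1+O(q^{-1}))$ as claimed. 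I expect the main delicate point to be the identification of $N_\G(\T_c)(\Fp_q)/\T_c(\Fp_q)$ with $C_{W(\G)}(w)$ for \emph{twisted} (non-split) tori, which is standard but requires care with the $F$-structure (a Lang-theorem argument on $N_\G(\T_0)$); the remaining steps reduce to elementary point-counting on tori and kernels of characters.
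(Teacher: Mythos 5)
Your decomposition is a legitimate alternative to the paper's. Instead of summing over all $q^{2N}$ maximal tori via Steinberg's theorem and matching leading powers of $q$, you group the tori by $\G(\Fp_q)$-conjugacy class, use the orbit--stabilizer formula, invoke Carter's identification $N_\G(\T_c)(\Fp_q)/\T_c(\Fp_q)\cong C_{W(\G)}(w)$, and close with the class equation $\sum_c |c|/|W(\G)|=1$. This is tidier in the global counting step and is in fact exactly the machinery the paper deploys a page later when proving Proposition~\ref{P:Carter approximation}. So the skeleton is sound, but it transfers all the real work into the per-torus estimate $|\T(\Fp_q)^{reg}|=|\T(\Fp_q)|(1+O(q^{-1}))$, and that is where your sketch has genuine gaps.

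First, for a twisted maximal torus $\T$ the individual roots $\alpha\in\Phi(\G,\T)$ are in general \emph{not} $F$-fixed, so $\ker(\alpha)$ is not an $\Fp_q$-subgroup and ``$|\ker(\alpha)(\Fp_q)|$'' and ``the image of $\alpha$ on $\Fp_q$-points'' are not well-formed as written. You need to pass to $F$-stable sets of roots: if $x\in\T(\Fp_q)$ and $\alpha(x)=1$ then $({}^F\alpha)(x)=1$ as well, so the set $A$ of roots annihilating $x$ is $F$-stable and $x$ lies in the $\Fp_q$-subgroup $\D_A=\bigcap_{\alpha\in A}\ker\alpha$. This is precisely what the paper's proof sets up and what your version of the argument must use.

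Second, and more importantly, the step ``$|\alpha(\T(\Fp_q))|\geq (q-1)/n_\alpha$ for a bounded integer $n_\alpha$'' is the crux, not a throwaway. The integer $n_\alpha$ is controlled by the component group of $\ker\alpha$ (equivalently, by the divisibility of $\alpha$ in the lattice $X(\T)$), and showing that this is bounded purely in terms of the type of $\G$ is exactly the nontrivial content. The paper spends the entire second half of its proof on this: it shows $X(\D_A)\simeq X(\T)/\langle\Zz\alpha : \alpha\in A\rangle$ and then uses that the sublattice $\Psi$ generated by the roots and the characters of the centre has index in $X(\T)$ bounded by the type, so the torsion of $X(\D_A)$ is bounded by the type. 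You assert the bound without this argument, and since $\T$ ranges over all twisted forms (uniformly in $q$), you cannot simply wave it away. By contrast, the step you flag as the ``main delicate point'' --- the identification $N_\G(\T_c)(\Fp_q)/\T_c(\Fp_q)\cong C_{W(\G)}(w)$ --- is completely standard and covered by your citation of Carter, Prop.~3.3.6. So the difficulty lies where you did not expect it: supply the lattice/component-group argument and your proof goes through.
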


\begin{proof}
  As already observed, any element in $\G(\Fp_{q})_{sr}$ lies in a
  unique maximal torus of $\G$.  Hence we have
\begin{equation} \label{E:into pieces}
|\G(\Fp_{q})_{sr}|=\sum_{\Tt \in \mathcal{T}}{|\Tt(\Fp_q)\cap \G(\Fp_q)_{sr}|}
\end{equation}
where $\mathcal{T}$ is the set of ($F$-stable) maximal tori in
$\G$. 
\par
Now fix a maximal torus $\Tt\in\mathcal{T}$, and let
$\Phi=\Phi(\G,\T)$ be the set of roots of $\G$ with respect to $\Tt$.
Fix an element $x\in \T(\FF_q)$ that is not regular in $\G$.  Then
there exists a root $\alpha \in \Phi$ such that $\alpha(x)=1$
\cite[III.12.2]{borel}.  So let $A$ be the (non-empty) set of roots
$\alpha\in \Phi$ for which $\alpha(x)=1$, and define the algebraic
subgroup
$$
\D_{A}=\bigcap_{\alpha\in A}{\ker\alpha}
$$
of $\T$.  Since $A$ is $F$-stable, the group $\D_A$ is defined over
$\Fp_q$ and $x\in \D_A(\Fp_q)$.  We thus have
\[
|\{x\in \T(\Fp_q) : x \text{ is not regular in $\G$}\}| \leq \sum_{A}
|\D_A(\Fp_q)|,
\]
where the sum is over all non-empty $F$-stable subsets $A\subseteq \Phi$.
For any such subset $A\subseteq \Phi$, we claim that $|\D_A(\Fp_q)|=
O(q^{r-1})$ where $r$ is the dimension of $\T$ and the implied
constant depends only on the type of $\G$.  Assuming this for now, we
have
\[
|\{x\in \T(\Fp_q) : x \text{ is not regular in $\G$}\}| \ll
 \sum_A q^{r-1} \ll q^{r-1}
\]
where the implied constant again depends only on the type of $\G$, and
hence $|\T(\Fp_q) \cap \G(\Fp_q)_{sr}| = |\T(\Fp_q)| + O(q^{r-1})$.
Applying~(\ref{eq:card}) to $\T$, we get
\[
|\Tt(\Fp_q)\cap \G(\Fp_q)_{sr}| = q^r + O(q^{r-1}).
\]

We now return to (\ref{E:into pieces}).  According to a theorem of
Steinberg~\cite[Th. 3.4.1]{carter}, we have $|\mathcal{T}|=q^{2N}$
where $N$ is the number of positive roots of $\G$, so
\[
|\G(\Fp_{q})_{sr}|=\sum_{\Tt \in \mathcal{T}} |\Tt(\Fp_q)\cap
\G(\Fp_q)_{sr}| = |\mathcal{T}| \big(q^r + O(q^{r-1})\big) = q^{2N+r}
+ O(q^{2N+r-1})
\]
where the implied constant depends only on the type of $\G$.  The
desired estimate for $|\G(\Fp_{q})_{sr}|$ follows by noting that
$2N+r=\dim \G$ and applying~(\ref{eq:card}) to $\G$.

It remains to show that for a fixed maximal torus $\T$ and a non-empty
$F$-stable set $A$ of roots of $\G$ relative to $\T$, we have
$|\D_A(\Fp_q)|= O(q^{r-1})$ where the implied constant depends only on
the type of $\G$.  Since the connected component of the identity of a
diagonalizable group is a torus such that
$$
|\D_A^0(\Fp_q)|\leq (q+1)^{\dim \D_A}\leq (q+1)^{r-1}
$$
it is enough to show that the number of (geometric) connected
components of $\D_A$ is bounded in terms of the type of $\G$ only
(note that $\dim \D_A<\dim \Tt$, since $A$ is non-empty).  From the
exact sequence
$$
1\ra \ker(\alpha)\ra \Tt\fleche{\alpha}\G_m\ra 1,
$$
for $\alpha\in \Phi$, and the dual exact sequence
$$
0\ra \Zz\ra X(\Tt)\ra X(\ker(\alpha))\ra 0
$$
of abelian groups of finite rank (see~\cite[III.8.12]{borel}), we find
that the character group of $X(\D_A)$ is
\begin{equation}\label{eq-proj}
X(\D_A)\simeq X(\Tt)/\langle \Zz\alpha\,\mid\, \alpha\in A\rangle.
\end{equation}
\par
The fundamental structure theory of reductive groups shows that the
subgroup $\Psi$ of $X(\Tt)$ generated by the roots $\Phi$ together
with a basis of the characters of the center of $\G$ is of bounded
index in $X(\Tt)$, the bound depending only on the type of $\G$ (see,
e.g.,~\cite[1.11]{carter}). Thus the size of the torsion subgroup of
$X(\D_A)$ differs from that of
$$
\Psi_A=\Psi/\langle \Zz\alpha\,\mid\, \alpha\in A\rangle,
$$
only by a bound depending only on the type of $\G$. Moreover, $\Psi_A$
is defined purely in terms of the root datum, and therefore only
depends on the type of $\G$. Thus, the result follows.
\end{proof}

\begin{proof}[Proof of Proposition~\ref{P:Carter approximation}]
  Fix a conjugacy class $C\in W(\G)^\sharp$ and let $\mathscr{T}_C$ be
  the set of maximal tori $\T$ of $\G$ for which $\theta(\T)=C.$ Since
  a regular semisimple element of $\G$ lies in a unique maximal torus,
  we have
\begin{align*}
  \frac{|\{ g \in \G(\Fp_q)_{sr} : \theta(g)=C\}|}{| \G(\Fp_q)|}
  &= \sum_{\T \in \mathscr{T}_C}  
\frac{|  \T(\Fp_q) \cap \G(\Fp_q)_{sr}|}{|\G(\Fp_q)|}  \\
  & = \sum_{\T \in \mathscr{T}_C} \frac{| \T(\Fp_q)|}{|\G(\Fp_q)|} +
  O\Bigl(\frac{| \G(\Fp_q)- \G(\Fp_q)_{sr}|}{|\G(\Fp_q)|}\Bigr)\\
  & = \sum_{\T \in \mathscr{T}_C} \frac{| \T(\Fp_q)|}{|\G(\Fp_q)|} +
  O(q^{-1})
\end{align*}
where the last line uses Lemma~\ref{L:strongly regular bound} and the
implicit constant depends only on the type of $\G$.  It thus suffices
to show that
$$\frac{1}{|\G(\Fp_q)|}
\sum_{\T \in  \mathscr{T}_C}  |\T(\Fp_q)| = \frac{ |C|}{|W(\G)|}.
$$

By Lemma~\ref{L:theta correspondence}, any two tori in $\mathscr{T}_C$
are $\G(\Fp_q)$-conjugate.  So after fixing a $\T_1 \in \mathscr{T}_C$
(that $\mathscr{T}_C\neq \emptyset$ is part of Lemma~\ref{L:theta
  correspondence}), we have
\[
|\mathscr{T}_{C}|=\frac{|\G(\Fp_q)|}{|N_{\G}(\T_1)(\Fp_q)|}
\]
(the denominator being the order of the stabilizer of $\T_1$ under
$\G(\Fp_q)$-conjugation) and hence
\[
\frac{1}{|\G(\Fp_q)|} \sum_{\Tt \in \mathscr{T}_C}|\Tt(\Fp_q)| = 
\frac{1}{|\G(\Fp_q)|}|\mathscr{T}_C| |\T_1(\Fp_q)| 
=  \frac{|\T_1(\Fp_q)|}{|N_{\G}(\T_1)(\Fp_q)|}.
\] 
By Proposition~3.3.6 of \cite{carter}, we have
$|N_{\G}(\Tt_1)(\Fp_q)/\Tt_1(\Fp_q) |=| C_{W(\G,\T_0)}(w)|$ where
$\T_0$ is a split maximal torus of $\G$, $w\in W(\G,\T_0)$ lies in
$C\in W(\G,\T_0)^\sharp=W(\G)^\sharp$, and $C_{W(\G,\T_0)}(w)$ is the
centralizer of $w$ in $W(\G,\T_0)$ (the action of $F$ on $W(\G,\T_0)$
is trivial since $\T_0$ is split, so the $F$-centralizers in
\cite{carter} are the same as standard centralizers).  Since $|W(\G)|=
|C|\cdot |C_{W(\G,\T_0)}(w)|$, the desired formula follows.
\end{proof}

It will be important for our application to have uniform bounds and
have estimates for those elements lying in certain special cosets in
$\G(\Fp_q)$.  
Let $\varphi\colon \G^{sc} \to \G$ be the universal cover of $\G$ (as
an algebraic group), the group and morphism are also defined over
$\Fp_q$.  The semisimple group $\G^{sc}$ is simply connected and the
kernel $\pi_1$ of $\varphi$ is a finite group scheme contained in the
center of $\G^{sc}$.  Our refined equidistribution result is the
following.

\begin{proposition} \label{P:Carter approximation2} Let $\G$ be a
  split semisimple group over $\Fp_q$.  Let $\kappa$ be a coset of
  $\varphi(\G^{sc}(\Fp_q))$ in $\G(\Fp_q)$.  Then for each $C\in
  W(\G)^\sharp$, we have
\[
\frac{\big|\big\{g \in \kappa \cap \G(\Fp_q)_{sr}: 
\theta(g)=C\big\}\big|}{|\kappa|}= \frac{|C|}{|W(\G)|} (1 + O(q^{-1}))
\]
where the implicit constant depends only on the type of $\G$.
\end{proposition}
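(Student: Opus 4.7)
The plan is to reduce Proposition~\ref{P:Carter approximation2} to Proposition~\ref{P:Carter approximation} by showing that every maximal torus of $\G$ meets every coset $\kappa$ of $\varphi(\G^{sc}(\Fp_q))$ in a subset of the expected size. Let $m = [\G(\Fp_q) : \varphi(\G^{sc}(\Fp_q))]$; the key claim is that $|\T(\Fp_q) \cap \kappa| = |\T(\Fp_q)|/m$ for every maximal torus $\T$ of $\G$ and every such coset $\kappa$.

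To establish this, I would argue with Galois cohomology. Fix a maximal torus $\T$ of $\G$ and let $\T^{sc} = \varphi^{-1}(\T)$. Since $\pi_1 \subseteq Z(\G^{sc})$ is contained in every maximal torus of $\G^{sc}$, the group $\T^{sc}$ is the unique maximal torus of $\G^{sc}$ mapping onto $\T$; in particular, $\T^{sc}$ is a connected torus defined over $\Fp_q$, and we have a short exact sequence
\[
1 \to \pi_1 \to \T^{sc} \to \T \to 1
\]
sitting inside $1 \to \pi_1 \to \G^{sc} \to \G \to 1$. Passing to Galois cohomology over $\Fp_q$ and using Lang's theorem (which yields $H^1(F, \G^{sc}) = H^1(F, \T^{sc}) = 1$, since both groups are connected) gives isomorphisms
\[
\T(\Fp_q)/\varphi(\T^{sc}(\Fp_q)) \xrightarrow{\sim} H^1(F, \pi_1) \xleftarrow{\sim} \G(\Fp_q)/\varphi(\G^{sc}(\Fp_q)),
\]
and by functoriality the induced map between these two quotients is the identity. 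Hence $\T(\Fp_q) \to \G(\Fp_q)/\varphi(\G^{sc}(\Fp_q))$ is surjective with kernel $\varphi(\T^{sc}(\Fp_q))$, which gives the claim, and shows simultaneously that $m = |H^1(F, \pi_1)|$ divides $|\pi_1|$ and is thus bounded in terms of the type of $\G$.

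The rest is bookkeeping. The proof of Lemma~\ref{L:strongly regular bound} actually gives $|\G(\Fp_q) - \G(\Fp_q)_{sr}| = O(q^{\dim \G - 1})$, and since $|\kappa| = |\G(\Fp_q)|/m \gg q^{\dim \G}$ with constants depending only on the type, we obtain $|\kappa - \kappa \cap \G(\Fp_q)_{sr}|/|\kappa| = O(q^{-1})$. Decomposing the set of regular semisimple elements in $\kappa$ according to the unique maximal torus each lies in, and writing $\mathscr{T}_C$ as in the proof of Proposition~\ref{P:Carter approximation}, the identity $|\T(\Fp_q) \cap \kappa| = |\T(\Fp_q)|/m$ in the numerator combines with $|\kappa| = |\G(\Fp_q)|/m$ in the denominator so that the factors of $m$ cancel, giving
\[
\frac{|\{g \in \kappa \cap \G(\Fp_q)_{sr} : \theta(g) = C\}|}{|\kappa|} = \sum_{\T \in \mathscr{T}_C} \frac{|\T(\Fp_q)|}{|\G(\Fp_q)|} + O(q^{-1}),
\]
and the main sum equals $|C|/|W(\G)|$ by the computation at the end of the proof of Proposition~\ref{P:Carter approximation}.

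The main technical point is the cohomological step: correctly identifying $\T^{sc}$ as connected and as the full preimage $\varphi^{-1}(\T)$, and verifying that the map induced on the two $H^1(F,\pi_1)$ quotients is the natural identification. Once these are in place, everything else is a direct reprise of the proof of Proposition~\ref{P:Carter approximation} with cosmetic modifications to account for the coset $\kappa$.
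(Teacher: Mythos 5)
Your proposal is correct and follows essentially the same route as the paper: the paper isolates your key claim as a separate lemma (proved exactly as you do, via the two long exact sequences in Galois cohomology attached to $1\to\pi_1\to\G^{sc}\to\G\to 1$ and $1\to\pi_1\to\T^{sc}\to\T\to 1$, with vanishing of $H^1$ for the connected groups and compatibility of the connecting maps), and then runs the same coset-by-coset reprise of the proof of Proposition~\ref{P:Carter approximation}. No gaps.
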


We start with another simple lemma.

\begin{lemma} \label{L:index inequality} Let $\kappa$ be a coset of
  $\varphi(\G^{sc}(\Fp_q))$ in $\G(\Fp_q)$.  Then for any maximal
  torus $\T$ of $\G$, we have
\[
\frac{|\T(\Fp_q)\cap \kappa|}{|\kappa|} =
\frac{{|\T(\Fp_q)|}}{|\G(\Fp_q)|}.  
\]
\end{lemma}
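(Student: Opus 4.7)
The plan is to rewrite the desired identity as the assertion that the natural group homomorphism $\alpha\colon \T(\Fp_q)\to \G(\Fp_q)/\varphi(\G^{sc}(\Fp_q))$ (obtained by composing inclusion and projection) is surjective. Indeed, all cosets $\kappa$ of $\varphi(\G^{sc}(\Fp_q))$ in $\G(\Fp_q)$ have the same cardinality, and the nonempty fibers of $\alpha$ are all translates of $\ker\alpha$. So if $\alpha$ is surjective then $|\T(\Fp_q)\cap \kappa|=|\T(\Fp_q)|\cdot [\G(\Fp_q):\varphi(\G^{sc}(\Fp_q))]^{-1}$ is independent of $\kappa$, and dividing by $|\kappa|=|\G(\Fp_q)|\cdot [\G(\Fp_q):\varphi(\G^{sc}(\Fp_q))]^{-1}$ yields exactly the claimed ratio $|\T(\Fp_q)|/|\G(\Fp_q)|$.

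To establish surjectivity of $\alpha$, I would set $\T^{sc}:=\varphi^{-1}(\T)$. Since $\pi_1$ is central in $\G^{sc}$ and hence lies in every maximal torus, $\T^{sc}$ is a maximal torus of $\G^{sc}$, and in particular is connected of the same dimension as $\T$. The inclusions fit into a commutative diagram of short exact sequences of $\Fp_q$-group schemes
\[
\begin{CD}
1 @>>> \pi_1 @>>> \T^{sc} @>\varphi>> \T @>>> 1 \\
@.      @|       @VVV               @VVV    @. \\
1 @>>> \pi_1 @>>> \G^{sc} @>\varphi>> \G @>>> 1,
\end{CD}
\]
and taking Galois cohomology with respect to $\Gal(\Fbar_q/\Fp_q)$ produces a pair of compatible long exact sequences.

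Lang's theorem, applied to the connected groups $\T^{sc}$ and $\G^{sc}$, gives $H^1(\Fp_q,\T^{sc})=0$ and $H^1(\Fp_q,\G^{sc})=0$. Consequently both connecting maps
\[
\T(\Fp_q)\twoheadrightarrow H^1(\Fp_q,\pi_1),\qquad \G(\Fp_q)\twoheadrightarrow H^1(\Fp_q,\pi_1)
\]
are surjective, and the kernel of the second one is precisely $\varphi(\G^{sc}(\Fp_q))$, so that it identifies $\G(\Fp_q)/\varphi(\G^{sc}(\Fp_q))$ with $H^1(\Fp_q,\pi_1)$. By naturality (i.e.\ commutativity of the cohomology diagram), $\alpha$ coincides, under this identification, with the first surjection; in particular, $\alpha$ is surjective, which finishes the proof.

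I do not expect a serious obstacle here: the only structural input beyond formal diagram chasing is the fact that $\T^{sc}=\varphi^{-1}(\T)$ is a maximal torus of $\G^{sc}$ (immediate from the centrality of $\pi_1$), together with the two vanishing statements provided by Lang's theorem. Everything else is bookkeeping.
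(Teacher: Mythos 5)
Your proof is correct and follows essentially the same route as the paper: both arguments run the two long exact cohomology sequences attached to $1\to\pi_1\to\G^{sc}\to\G\to1$ and $1\to\pi_1\to\T^{sc}\to\T\to1$, use the vanishing of $H^1$ for the connected groups $\G^{sc}$ and $\T^{sc}$ (the paper cites Steinberg's theorem where you cite Lang's, which is the relevant statement over a finite field), and conclude via the compatibility of the two connecting maps into $H^1(\Fp_q,\pi_1)$. The only cosmetic difference is that you phrase the conclusion as surjectivity of $\T(\Fp_q)\to\G(\Fp_q)/\varphi(\G^{sc}(\Fp_q))$, whereas the paper computes both ratios directly as $1/|H^1(\Fp_q,\pi_1)|$.
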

\begin{proof}
  The short exact sequence $1\to \pi_1 \to \G^{sc}
  \xrightarrow{\varphi} \G \to 1$ gives the following long exact
  sequence in Galois cohomology,
\[
1 \to \pi_1(\Fp_q) \to \G^{sc}(\Fp_q) \xrightarrow{\varphi} \G(\Fp_q)
\xrightarrow{\delta} H^1(\Fp_q,\pi_1) \to 1,
\]
since $H^1(\Fp_q,\G^{sc})=1$ by Steinberg's theorem
\cite[1.9]{steinberg}.  Thus there exists an element $\kappa_0 \in
H^1(\Fp_q,\pi_1)$ such that $g\in \G(\Fp_q)$ lies in $\kappa$ if and
only if $\delta(g) = \kappa_0$.  Since $\pi_1$ is contained in the
center of $\G^{sc}$, there is a maximal torus $\T^{sc}$ of $\G^{sc}$
giving an exact sequence $1\to \pi_1 \to \T^{sc} \xrightarrow{\varphi}
\T \to 1$ and a long exact sequence
\[
1 \to \pi_1(\Fp_q) \to \T^{sc}(\Fp_q) \xrightarrow{\varphi} \T(\Fp_q)
\xrightarrow{\delta'} H^1(\Fp_q,\pi_1) \to 1.
\]
\par
The homomorphism $\delta'$ agrees with the homomorphism $\delta$ when
restricted to $\T(\Fp_q)$.  Therefore,
\begin{align*}
  \frac{|\T(\Fp_q)\cap \kappa|}{|\T(\Fp_q)|} &= \frac{|\{ t\in
    \T(\Fp_q): \delta'(t)=\kappa_0\}|}{|\T(\Fp_q)|} =
  \frac{1}{|H^1(\Fp_q,\pi_1)|}
\end{align*}
and
\[ 
\frac{| \kappa|}{|\G(\Fp_q)|} = \frac{|\{ g\in \G(\Fp_q):
  \delta(t)=\kappa_0\}|}{|\G(\Fp_q)|} = \frac{1}{|H^1(\Fp_q,\pi_1)|}.
\]
\end{proof}

\begin{proof}[Proof of Proposition~\ref{P:Carter approximation2}]
  Fix a conjugacy class $C\in W(\G)^\sharp$ and let $\mathscr{T}_C$ be
  the set of maximal tori $\T$ of $\G$ for which $\theta(\T)=C.$ Since
  a regular semisimple element of $\G$ lies in a unique maximal torus,
  we have
\begin{align*}
  \frac{|\{ g \in \kappa \cap \G(\Fp_q)_{sr} : \theta(g)=C\}|}{|
    \kappa|}
  &= \sum_{\T \in \mathscr{T}_C}  \frac{|  \kappa \cap \T(\Fp_q) \cap \G(\Fp_q)_{sr}|}{| \kappa|}  \\
  & = \sum_{\T \in \mathscr{T}_C} \frac{| \kappa \cap \T(\Fp_q)|}{|
    \kappa|}
  + O\Bigl(\frac{| \G(\Fp_q)- \G(\Fp_q)_{sr}|}{| \kappa|}\Bigr)\\
  & = \sum_{\T \in \mathscr{T}_C} \frac{| \kappa \cap \T(\Fp_q)|}{|
    \kappa|} + O(q^{-1})
\end{align*}
where the last line uses Lemma~\ref{L:strongly regular bound} and
$|\G(\Fp_q)|/|\kappa| = O(1)$ (the implicit constants depend only on
the type of $\G$). By Lemma~\ref{L:index inequality}, we have
\[
\frac{|\{ g \in \kappa \cap \G(\Fp_q)_{sr} : \theta(g)=C\}|}{|
  \kappa|} = |\G(\Fp_q)|^{-1} \sum_{\T \in \mathscr{T}_C} |\T(\Fp_q)|
+ O(q^{-1}).
\]
This completes the proof since we have already proved that
$$
|\G(\Fp_q)|^{-1}\sum_{\T \in \mathscr{T}_C} |\T(\Fp_q)| =
|C|/|W(\G)|
$$ 
in the course of the proof of Proposition~\ref{P:Carter
  approximation2}.
\end{proof}

\section{Sieve for random walks on semisimple algebraic groups}
\label{sec-sieve}
To prove our main results in the next section, we will use sieve
methods. We first consider in this section the problem of obtaining a
general (upper-bound) sieve result for ``random'' elements of an
arithmetic group in a semisimple group over a number field.
\par
To give a meaning to ``random'' elements in $\G$, we use random walks,
as in~\cite[Ch. 7]{lsieve} (but see Section~\ref{sec-comments} for
comments on other possibilities). This involves a fair amount of
notation, but is otherwise quite convenient.
\par
In all this section, we therefore consider to have fixed the following
data:
\begin{itemize}
\item A number field $k$;
\item A connected semisimple algebraic group $\Gg/k$ (not necessarily
  split);
\item An arithmetic subgroup $\Gamma\subset \G(k)$ of $\G$, as defined
  in the introduction, e.g.,
$$
\Gamma=\rho(\Gg(k))\cap \GL(N,\Zz_k)
$$ 
for some faithful representation $\rho\,:\, \G\hookrightarrow \GL(N)$
over $k$.   
\item A finite symmetric (i.e., $s\in S$ implies $s^{-1}\in S$)
  generating $S$ set of $\Gamma$ (the group $\Gamma$ is finitely
  generated\footnote{\ In fact, finitely presented, which is a quite
    deeper property which we do not need.} by a theorem of Borel, see
  e.g.~\cite[Th. 4.17 (2)]{platonov-rapinchuk}); we will always assume
  that the pair $(\Gamma,S)$ is \emph{balanced}, by which we mean that
  either (i) $1\in S$, or (ii) there exists no non-trivial
  homomorphism $\Gamma\ra \Zz/2\Zz$ (this is in order to avoid
  possible issues with bipartite Cayley graphs, see~\cite[\S
  7.4]{lsieve} for a discussion of this point; we thank the referee
  for having reminded us of this issue);
\item A sequence $(\xi_n)$ of independent, identically distributed,
  random variables, defined on some probability space
  $(\Omega,\Sigma,\proba)$, taking values in $S$:
$$
\xi_n\,:\, \Omega\ra S,
$$
such that $p(s)=\proba(\xi_n=s)>0$ for all $s\in S$, and
$p(s)=p(s^{-1})$ for all $s$.
\end{itemize}

\begin{example}
  Readers not familiar with the general theory may take:
\begin{itemize}
\item The field $k=\Qq$; 
\item The group $\Gg=\SL(N)$, $N\geq 2$, with $\rho$ the inclusion in
  $\GL(N)$;
\item The arithmetic group $\Gamma=\SL(N,\Zz)$;
\item The system of generators $S$ of elementary matrices
  $\mathrm{Id}\pm E_{i,j}$ for distinct $i,j \in \{1,\ldots, n\}$
  where $E_{i,j}$ has zero in all entries except for the $(i,j)$-th
  where it is one; for $N=2$, we add $1$ to $S$ in order for
  $(\Gamma,S)$ to be balanced (when $N\geq 3$, there is no non-trivial
  map $\SL(N,\Zz)\ra \Zz/2\Zz$);
\item The probability space $\Omega=\{(s_n)_{n\geq 1}\,\mid\, s_n\in
  S\}$, with the product uniform normalized counting measure,
  $\xi_n(\omega)=s_n$ for $\omega=(s_n)_{n\geq 1}\in \Omega$, so that
  $p(s)=1/|S|$ for all $s$.
\end{itemize}
\par
This is a setting already considered in~\cite[\S 7]{lsieve}. Note
however that in that case $\Gg$ is simply connected, so much of the
work needed below to deal with the general case is unnecessary. For a
non-simply connected example, one may take $\Gg=\SO(N,N)$ for $N\geq
2$, and $\Gamma=\SO(N,N)(\Zz)$.
\end{example}

To have a meaningful asymptotic problem, the discrete group $\Gamma$
must be ``big enough''. It seems that the right way to quantify this
in our setting is simply to assume that $\Gamma$ is
\emph{Zariski-dense} in $\Gg$.  By the Borel Density Theorem (see,
e.g.,~\cite[Th. 4.10]{platonov-rapinchuk}), this assumption on the
arithmetic group $\Gamma$ can be formulated purely in terms of the
semisimple group $\G$: it means that for any simple component of
$\Gg$, say $\Hh$, and any real or complex completion $K$ of $k$, the
group $\Hh(K)$ is noncompact for the real or complex topology. In
particular, this holds whenever $\G$ is split.
\par
We then define
$$
X_0=1\in \Gamma,\quad\quad X_{n+1}=X_n\xi_{n+1},
$$
so that the sequence $(X_n)$ is a random walk on $\Gamma$.
\par
To perform the sieve, we require independence properties of reductions
modulo primes of arithmetic groups. This independence is only valid
for \emph{simply connected} groups, and to reduce to this case we use
ideas already found in~\cite{jouve} with some new tools. 
\par
Let $\varphi\,:\, \G^{sc}\ra \G$ be the simply connected covering of
$\G$ (as an algebraic group). Both $\G^{sc}$ and $\varphi$ are defined over
$k$, so we can define
$$
\Gamma^{sc}=\varphi(\varphi^{-1}(\Gamma)\cap \G^{sc}(k))\subset \Gamma.
$$
\par
It follows from basic facts about arithmetic groups (see,
e.g.,~\cite[Theorem 4.1]{platonov-rapinchuk}) that $\Gamma^{sc}$ is an
arithmetic subgroup of $\G$. In fact, since $\Gamma^{sc}\subset
\Gamma$, it follows that $\Gamma^{sc}$ has finite index in $\Gamma$.
\par
As recalled in Section~\ref{SS:reduction}, there exists a finite set
$R_0$ of prime ideals of $\Zz_k$ such that $\Gg$ has a model defined
over the ring $\Zz_{k}[1/R_0]$, and such that any two such models are
isomorphic after possibly inverting finitely many more primes.  By
abuse of notation, we will also denote the fixed model by $\G$.  After
possibly increasing $R_0$, we may assume that $\Gamma \subseteq
\G(\Zz_k[1/R_0])$.  In particular, for $\ideal{p}\notin R_0$, we
obtain a well-defined reduction map
$$
\pi_{\ideal{p}}\,:\, \Gamma\ra \G(\Fp_{\ideal{p}}),
$$
and similarly for the simply connected cover $\Gg^{sc}$, and we have
homomorphisms
$$
\varphi_{\ideal{p}}\,:\, \G^{sc}(\Fp_{\ideal{p}})\ra 
\G(\Fp_{\ideal{p}}).
$$
\par
The following deep result, called the ``Strong Approximation
Property'', explains why we need to use $\Gamma^{sc}$: the statement
is false, in general, if $\Gamma^{sc}$ is replaced with $\Gamma$
itself.

\begin{proposition}\label{pr-sc}
  Let $(k, \G, \Gamma)$ be as given, in particular such that $\Gamma$
  is Zariski-dense in $\G$, and let $\G^{sc}$, $\Gamma^{sc}$ be as
  defined above. Let
$$
\Gamma^{sc}_{\ideal{p}}=\pi_{\ideal{p}}(\Gamma^{sc}),
$$
where $\pi_{\ideal{p}}$ is the reduction map defined above for almost
all prime ideals of $\Zz_k$. 
\par
Then there exists a finite subset $R\supset R_0$ of prime ideals,
depending only on $(k,\G, \Gamma,R_0)$, such that for any
$\ideal{p}\notin 
R$, we have
$$
\Gamma^{sc}_{\ideal{p}}=\varphi_{\ideal{p}}(\G^{sc}(\Fp_{\ideal{p}})),
$$
the image of the group of $\Fp_{\ideal{p}}$-rational points of the
simply connected covering of $\G$, and moreover the product maps
$$
\Gamma^{sc}\fleche{\pi_{\ideal{p}}\times \pi_{\ideal{p}'}}
\Gamma^{sc}_{\ideal{p}}\times \Gamma^{sc}_{\ideal{p}'}
$$
for $\ideal{p}\not=\ideal{p}'$, both not in $R$, are surjective.
\end{proposition}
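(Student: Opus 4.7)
The plan is to deduce this from the Strong Approximation Theorem for simply connected semisimple groups over number fields (Kneser--Platonov, see e.g.~\cite[Th.~7.12]{platonov-rapinchuk}), which is tailored to assertions of exactly this form and makes transparent why the reduction to $\G^{sc}$ in the definition of $\Gamma^{sc}$ is essential.

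First, I would lift $\Gamma^{sc}$ to the simply connected cover. Set $\widetilde{\Gamma}=\varphi^{-1}(\Gamma)\cap\G^{sc}(k)$; by construction $\varphi(\widetilde{\Gamma})=\Gamma^{sc}$. Since $\varphi$ is a central isogeny with finite kernel and $\Gamma$ is commensurable with $\G(\Zz_k)$, the preimage $\widetilde{\Gamma}$ is commensurable with $\G^{sc}(\Zz_k)$ and hence is an arithmetic subgroup of $\G^{sc}$. Its Zariski closure maps under $\varphi$ onto the Zariski closure of $\Gamma^{sc}$; and since $\Gamma^{sc}$ has finite index in $\Gamma$ it is Zariski-dense in $\G$, so as $\varphi$ is finite and surjective the preimage of $\G$ equals $\G^{sc}$. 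Therefore $\widetilde{\Gamma}$ is Zariski-dense in $\G^{sc}$. The non-compactness hypothesis needed for strong approximation then follows: by the Borel Density Theorem recalled in the excerpt, every almost simple factor $\Hh$ of $\G$ satisfies that $\Hh(K)$ is non-compact for every archimedean completion $K$ of $k$; the central isogeny sends each almost simple factor $\Hh^{sc}$ of $\G^{sc}$ onto such an $\Hh$ by a continuous surjection, so $\Hh^{sc}(K)$ cannot be compact either.

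Second, I would invoke Strong Approximation directly: after enlarging $R_0$ to a finite set $R$ of prime ideals if necessary, the diagonal image of $\widetilde{\Gamma}$ is dense in the restricted product $\prod_{\ideal{p}\notin R}\G^{sc}(\cO_\ideal{p})$. Projecting to any finite subproduct therefore has dense image, and composing with the reductions $\G^{sc}(\cO_\ideal{p})\twoheadrightarrow \G^{sc}(\Fp_\ideal{p})$ — which are surjective by smoothness of the integral model of $\G^{sc}$ and Hensel's lemma, perhaps after enlarging $R$ once more — converts density into honest surjectivity of homomorphisms to finite groups. This yields, for any distinct $\ideal{p},\ideal{p}'\notin R$, both $\widetilde{\Gamma}\twoheadrightarrow \G^{sc}(\Fp_\ideal{p})$ and $\widetilde{\Gamma}\twoheadrightarrow \G^{sc}(\Fp_\ideal{p})\times\G^{sc}(\Fp_{\ideal{p}'})$. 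Post-composing with $\varphi_\ideal{p}$ and $\varphi_{\ideal{p}'}$ and using $\varphi(\widetilde{\Gamma})=\Gamma^{sc}$ gives both conclusions of the proposition: $\Gamma^{sc}_\ideal{p}=\varphi_\ideal{p}(\G^{sc}(\Fp_\ideal{p}))$, and the stated product surjectivity.

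The main obstacle is this third step, namely the correct invocation of Kneser--Platonov: one has to verify that the Zariski-dense arithmetic group $\widetilde{\Gamma}$ genuinely satisfies the hypotheses of the theorem for each almost simple factor of $\G^{sc}$, and one must be prepared to discard a further finite set of primes so as to obtain smoothness of the model together with surjectivity of reduction on $\cO_\ideal{p}$-points. The reduction to $\G^{sc}$ in Step one is precisely what permits strong approximation to apply — as emphasized in the excerpt, the analogous statement is false in general for $\Gamma$ in place of $\Gamma^{sc}$ — and once the deep theorem is in hand, the remaining manipulations with reduction maps, Hensel's lemma and $\varphi$ are essentially formal.
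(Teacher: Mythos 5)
Your proposal is correct and follows essentially the same route as the paper: lift to $\widetilde{\Gamma}=\varphi^{-1}(\Gamma)\cap\G^{sc}(k)$, apply a strong approximation theorem in the simply connected group, and push the resulting surjectivity back down through $\varphi$ and $\varphi_{\ideal{p}}$. The one substantive difference is the black box invoked: the paper applies Weisfeiler's strong approximation theorem for finitely generated Zariski-dense subgroups, which directly yields surjectivity of $\varphi^{-1}(\Gamma)\to\G^{sc}(\Zz_k/I)$ for ideals $I$ coprime to a finite set (and then handles two primes by the Chinese Remainder Theorem), whereas you use the classical Kneser--Platonov theorem together with the observation that $\widetilde{\Gamma}$ is an arithmetic subgroup of $\G^{sc}$; both are legitimate here, since $\Gamma$ is assumed arithmetic, and Weisfeiler's version would only become indispensable for thin Zariski-dense subgroups. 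One small inaccuracy in your non-compactness step: a central isogeny $\Hh^{sc}\to\Hh$ is generally \emph{not} surjective on $K$-points for an archimedean completion $K$ (e.g.\ $\SL_2(\Rr)\to\mathrm{PGL}_2(\Rr)$); the correct statement is that the image of $\Hh^{sc}(K)$ is a closed finite-index subgroup of $\Hh(K)$ and the map is proper with finite kernel, which still forces $\Hh^{sc}(K)$ to be non-compact whenever $\Hh(K)$ is, so the conclusion stands after this repair.
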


\begin{proof}
  Results of this type, in varying generality, have been proved by
  many people, using a wide variety of techniques; see, e.g., the
  papers of Nori~\cite[Th. 5.1]{nori}, Matthews, Vaserstein and
  Weisfeiler~\cite[Th., p. 515]{mvw}, Weisfeiler~\cite[\S
  9]{weisfeiler}, Hrushovski and Pillay~\cite[Prop. 7.3]{hru-pillay}
  (see also the comments in~\cite[\S 7.5]{platonov-rapinchuk}).
  Precisely, we first apply~\cite[Th. 9.1.1]{weisfeiler} with data
$$
(k,G,\Gamma) = (k,\G^{sc},\varphi^{-1}(\Gamma))
$$
to deduce that $\varphi^{-1}(\Gamma)$ surjects to $\G^{sc}(\Zz_k/I)$
for all integral ideals $I\not=0$ coprime with some finite set of
primes in $\Zz_{k}$. Taking $I=\ideal{p}$ and composing with $\varphi$
and $\varphi_{\ideal{p}}$, respectively, we derive
$\Gamma^{sc}_{\ideal{p}}=\pi_{\ideal{p}}(\Gamma^{sc})$ for
$\ideal{p}\notin R$.
\par
Then, since 
$$
\G^{sc}(\Zz_k/\ideal{p}\ideal{p'})=\G^{sc}(\Zz_k/\ideal{p})
\times \G^{sc}(\Zz_k/\ideal{p'})
$$
if $\ideal{p}\not=\ideal{p}'$ are both prime ideals not in $R$ (by a
straightforward Chinese Remainder Theorem), it also follows that
$\varphi^{-1}(\Gamma)$ surjects onto $\G^{sc}(\Fp_{\ideal{p}})\times
\G^{sc}(\Fp_{\ideal{p}'})$ for $\ideal{p}$ and $\ideal{p}'$ both
outside $R$, and the final conclusion is obtained by applying again
the map $\varphi$.
\end{proof}

\begin{remark}
  Here is an illustration of failure of this result when the group is
  not simply connected: let $Q$ be a nondegenerate indefinite
  quadratic form over $\Zz$ and $\Gg=\SO(Q)$, which is defined over
  $\Zz$. It is a standard fact that the spinor norm of an element in
  the group of integral points $\SO(Q,\Zz)$ is $\pm 1$ (modulo the non
  zero squares of $\Qq^\times$). Thus for any prime $p$ congruent to
  $1$ modulo $4$ (i.e.  for a subset of primes of density $1/2$), the
  image of $\SO(Q,\Zz)$ by reduction modulo $p$ equals the spinorial
  kernel $\Omega(n,\Fp_p)$ which means that for any $p$ congruent to
  $1$ modulo $4$, the morphism of reduction modulo $p$ fails to be
  surjective onto $\SO(Q,\Fp_p)$ and its image has index $2$.
\end{remark}

Since $\Gamma^{sc}$ is of finite index in $\Gamma$, the idea is now to
use random walks on the cosets of $\Gamma^{sc}$ in $\Gamma$ in order
to perform the sieve. Of course, the original walk we wish to consider
is not of this type. One could, as in~\cite[Section 1.1]{jouve} decide
that it is good enough to deal with each fixed coset separately (using
random variables of the type $Y_n=\gamma X_n$ for a fixed $\gamma$ and
a random walk $(X_n)$ on $\Gamma^{sc}$), provided we obtain the
``same'' result, independently of $\gamma$. However, we want to do
better. For this, the idea is also suggested by the (easy) case
of~\cite[Prop. 7.11]{lsieve}, where random walks on $\Sp(4,\Zz)$ were
studied by reducing to auxiliary random walks (namely $Y_n=X_{2n}$ and
$Z_n=X_{2n+1}$) on the two cosets in
$\Sp(4,\Zz)/[\Sp(4,\Zz),\Sp(4,\Zz)]$, when the original walk had the
property that every other step was in the non-identity coset.
\par
We do something similar; we don't know exactly when the finitely many
cosets $\gamma\in \Gamma^{sc}\backslash \Gamma$ are reached, but we
can use probabilistic results to show that every coset is covered
essentially equally often. Note that readers not familiar with the
basic properties of Markov chains (with countable state space) may
wish to assume that $\Gamma^{sc}=\Gamma$ and skip directly to
Corollary~\ref{cor-sieve-semisimple}, reading the latter with this
assumption in mind.\footnote{\ This assumption holds in a number of
  cases, such as $\SL(m)$ or $\Sp(2g)$.}
\par
Let $\mathcal{C}=\Gamma^{sc}\backslash \Gamma$ be the finite set of
cosets; we write $g\equiv \gamma$ to state that $g\in \Gamma$ is in
the coset $\gamma\in \mathcal{C}$ (instead of $g\in \gamma$). Fix
representatives $\tilde{\gamma}$ in $\Gamma$ of all $\gamma\in \mathcal{C}$.
\par
Let now $(\gamma_n)$ be the random walk on the finite set
$\mathcal{C}=\Gamma^{sc}\backslash \Gamma$ induced from the walk $(X_n)$ on
$\Gamma$. In probabilistic terms, $(\gamma_n)$ is a finite Markov
chain with Markov kernel
$$
K(\gamma,\gamma')=\sum_{s\in S,\, \gamma s=\gamma'}{p(s)}.
$$
\par
This Markov chain is \emph{irreducible}, because the possible steps
$S$ of $(X_n)$ have positive probability and generate $\Gamma$, and
\emph{reversible} because the probabilities $p(s)$ satisfy
$p(s)=p(s^{-1})$. The (unique) \emph{stationary distribution}
associated with $(\gamma_n)$ is the uniform distribution on $\mathcal{C}$, i.e.,
we have
$$
\frac{1}{|\mathcal{C}|}\sum_{\gamma\in \mathcal{C}}{K(\gamma,\gamma')}=\frac{1}{|\mathcal{C}|}
$$
for any $\gamma'\in \mathcal{C}$. (For basic facts and terminology, we refer
to~\cite[\S 2]{saloff-coste} or~\cite[Ch. II]{bw}.)
\par
For any $\gamma\in \mathcal{C}$, we define recursively the following sequence of
\emph{random times}
$$
t_{\gamma,j}\,:\, \Omega\ra \{0,1, 2\ldots, \}\cup {\infty},
$$
which indicate for which successive indices the walk falls in $\gamma$:
first
$$
t_{\gamma,0}=\min\{n\geq 0\,\mid\, X_n\equiv \gamma\},
$$
and for $j\geq 0$, we have
$$
t_{\gamma,j+1}=
\begin{cases}
  +\infty&\text{ if } t_{\gamma,j}=+\infty\\
  \min\{n> t_{\gamma,j}\,\mid\, X_n\equiv \gamma\},&\text{ otherwise.}
\end{cases}
$$
\par
We then define auxiliary random walks by
$$
Y_{\gamma,j}=
\begin{cases}
X_{t_{\gamma,j}}\in\gamma&\text{ if } t_{\gamma,j}<+\infty\\
\tilde{\gamma}\in\gamma&\text{ otherwise,}
\end{cases}
$$
where (as we will see immediately) the second case is only present for
definiteness.
\par
These random walks are then quite similar to the original ones, but
(by definition) lie in a single coset of $\Gamma^{sc}$.

\begin{lemma}
With notation as above, we have the following properties:
\par
\emph{(1)} Almost surely, all the $t_{\gamma,j}$ are finite. 
\par
\emph{(2)} For any $\gamma\in \mathcal{C}$, the sequence $(Y_{\gamma,j})_{j\geq
  0}$ is a random walk on the coset $\gamma$, given by the initial
$\gamma$-valued random variable $Y_{\gamma,0}$, and with steps
$$
\beta_{\gamma,j}=Y_{\gamma,j-1}^{-1}Y_{\gamma,j}
$$ 
which are $\Gamma^{sc}$-valued, independent and identically
distributed; their distribution is given by the rule
\begin{equation}\label{eq-steps}
\proba(\beta_{\gamma,j}=g)=\sum_{k\geq 1}{
\sum_{\stacksum{s_1\cdots s_k=g}{
s_1\cdots s_m\notin \Gamma^{sc},\ m<k}}
{
p(s_1)\cdots p(s_k)
}},\quad\quad\text{for } g\in \Gamma^{sc}.
\end{equation}
Moreover, we have
$\proba(\beta_{\gamma,j}=g)=\proba(\beta_{\gamma,j}=g^{-1})$ for any
$g\in \Gamma^{sc}$.
\end{lemma}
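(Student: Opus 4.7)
The plan is to prove (1) by a standard Markov chain argument, and (2) by applying the strong Markov property at the hitting times $t_{\gamma,j-1}$ and then exploiting symmetry through a reversal map on admissible word sequences.

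For (1), I would observe that the induced chain $(\gamma_n)$ on the finite set $\mathcal{C}=\Gamma^{sc}\backslash \Gamma$ is irreducible: since $S$ generates $\Gamma$ and $p(s)>0$ for every $s\in S$, for any pair $\gamma,\gamma'$ there exists a word in $S$ sending $\gamma$ to $\gamma'$ under the right-multiplication action on cosets, hence some power of $K$ has a strictly positive entry at $(\gamma,\gamma')$. A finite irreducible Markov chain is positive recurrent, so every state is visited infinitely often almost surely. Applied to each $\gamma\in\mathcal{C}$ (and to each $j$ inductively), this yields $t_{\gamma,j}<\infty$ almost surely.

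For (2), I would apply the strong Markov property of the random walk $(X_n)$ at the stopping times $t_{\gamma,j-1}$ (which are finite a.s. by (1)). Conditionally on $\mathcal{F}_{t_{\gamma,j-1}}$, the shifted sequence $(\xi_{t_{\gamma,j-1}+i})_{i\geq 1}$ is i.i.d.\ with the same law as $(\xi_i)_{i\geq 1}$ and is independent of the past. Writing $X_{t_{\gamma,j-1}}\equiv\gamma$, the next return time satisfies $t_{\gamma,j}-t_{\gamma,j-1}=k$ where $k$ is the first index for which $X_{t_{\gamma,j-1}}\xi_{t_{\gamma,j-1}+1}\cdots\xi_{t_{\gamma,j-1}+k}\equiv\gamma$. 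Using that $\Gamma^{sc}$ is normal in $\Gamma$ (so that $g\Gamma^{sc}g^{-1}=\Gamma^{sc}$ for any representative $g$ of $\gamma$), this condition is equivalent to $\xi_{t_{\gamma,j-1}+1}\cdots\xi_{t_{\gamma,j-1}+k}\in\Gamma^{sc}$ with no earlier partial product lying in $\Gamma^{sc}$. This simultaneously shows that the $\beta_{\gamma,j}$ are $\Gamma^{sc}$-valued, i.i.d., independent of $Y_{\gamma,0}$, and yields the formula~\eqref{eq-steps} by summing over all admissible words.

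For the symmetry $\proba(\beta_{\gamma,j}=g)=\proba(\beta_{\gamma,j}=g^{-1})$, I would exhibit the involution
\[
\sigma\colon (s_1,\ldots,s_k)\longmapsto (s_k^{-1},\ldots,s_1^{-1})
\]
on $S^k$. It sends a word with product $g\in\Gamma^{sc}$ to one with product $g^{-1}$, and preserves the weight $p(s_1)\cdots p(s_k)$ because $p(s)=p(s^{-1})$. The crucial point is that it preserves the ``no intermediate return'' constraint: for $1\leq m<k$, the partial product $s_k^{-1}\cdots s_{k-m+1}^{-1}=(s_{k-m+1}\cdots s_k)^{-1}$ lies in $\Gamma^{sc}$ iff $s_{k-m+1}\cdots s_k\in\Gamma^{sc}$, and since $s_1\cdots s_k=g\in\Gamma^{sc}$, this is iff $s_1\cdots s_{k-m}\in\Gamma^{sc}$, which is forbidden because $1\leq k-m\leq k-1$. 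Thus $\sigma$ is a probability-preserving bijection between the two sets of admissible words, giving the desired equality.

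The main obstacle is not computational but conceptual: making sure that the strong Markov property is applied correctly at $t_{\gamma,j-1}$ and that the return condition on cosets translates into the stated condition on $\Gamma^{sc}$. This is exactly where normality of $\Gamma^{sc}$ in $\Gamma$ plays the decisive role, ensuring that the law of $\beta_{\gamma,j}$ does not depend on $\gamma$ or on the choice of representative $\tilde\gamma$.
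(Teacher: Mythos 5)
Your proposal is correct and follows essentially the same route as the paper: part (1) by irreducibility and positive recurrence of the induced finite chain $(\gamma_n)$, and part (2) by applying the strong Markov property at the stopping times $t_{\gamma,j-1}$ and translating the coset return condition into the ``no intermediate partial product in $\Gamma^{sc}$'' constraint. You are somewhat more explicit than the paper on two points: the role of normality of $\Gamma^{sc}$ in $\Gamma$ (which guarantees the $\beta_{\gamma,j}$ are $\Gamma^{sc}$-valued with a law independent of $\gamma$) and the word-reversal involution $(s_1,\ldots,s_k)\mapsto(s_k^{-1},\ldots,s_1^{-1})$ justifying the symmetry $\proba(\beta_{\gamma,j}=g)=\proba(\beta_{\gamma,j}=g^{-1})$, which the paper dismisses as obvious; both elaborations are correct and welcome.
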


\begin{proof}
  Part (1) is a well-known property of finite irreducible Markov
  chains (it is possible to go from any coset to the other), see,
  e.g.,~\cite[Prop. II.8.1]{bw}.
\par
Part (2): from (1), we know that the random walk $(Y_{\gamma,j})$ is
well-defined. Its initial state is $Y_{\gamma,0}$ by
definition. Therefore, it remains to show that the steps
$$
\beta_{\gamma,j}=Y_{\gamma,j-1}^{-1}Y_{\gamma,j}=
X_{t_{\gamma,j-1}}^{-1}X_{t_{\gamma,j}},\quad\text{ for }
j\geq 1
$$
are distributed according to~(\ref{eq-steps}), are independent, and
independent of the initial step $Y_{\gamma,0}$. This is intuitively
natural, and is a fairly standard fact in probability, but we give a
certain amount of details for completeness for those readers who have
not seen this type of arguments before (see, e.g.,~\cite[II,
Th. 4.1]{bw} for similar reasoning).
\par
Of course, $\beta_{\gamma,j}$ is $\Gamma^{sc}$-valued by
construction. We will show that the distribution is the one
claimed. For $g\in \Gamma^{sc}$, we have
\begin{align}
\proba(\beta_{\gamma,j}=g)
&=\proba(X_{t_{\gamma,j}}=X_{t_{\gamma,j-1}}g)\nonumber\\
&=\sum_{k\geq 1}{\proba(t_{\gamma,j}=t_{\gamma,j-1}+k,\text{ and }
X_{t_{\gamma,j-1}+k}=X_{t_{\gamma,j-1}}g)}\nonumber\\
&=\sum_{k\geq 1}{
\sum_{\stacksum{(s_1,\ldots,s_k)\in S^k}{g=s_1\cdots s_k}}
{
\proba(t_{\gamma,j}=t_{\gamma,j-1}+k\text{ and }
\xi_{t_{\gamma,j-1}+m}=s_m,\text{ for } 1\leq m\leq k)
}}\nonumber\\
&=\sum_{k\geq 1}{
\sum_{\stacksum{g=s_1\cdots s_k}{s_1\cdots s_m\notin \Gamma^{sc},\ m<k}}}
{
\proba(\xi_{t_{\gamma,j-1}+m}=s_m,\text{ for } 1\leq m\leq k)
}\label{eq-steps2}
\end{align}
since the condition that $t_{\gamma,j}=t_{\gamma,j-1}+k$ means, by
definition of $t_{\gamma,j}$ and the fact that $X_{t_{\gamma,j-1}}\in
\Gamma^{sc}$, that none of the intermediate elements
$$
\xi_{t_{\gamma,j-1}+1}\cdots
\xi_{t_{\gamma,j-1}+m}=s_1\cdots s_m,
$$
are in $\Gamma^{sc}$ for $1\leq m<k$.
\par
Now we can invoke the \emph{strong Markov property} of the original
random walk~\cite[Th. II.4.1]{bw}, which implies that the random walk
defined by
\begin{equation}\label{eq-markov}
Z_m=X_{t_{\gamma,j-1}+m},\quad\text{ for }m\geq 0
\end{equation}
is itself a random walk on $\Gamma$ with steps which are independent and
distributed like the original steps of $(X_n)$. Note that this would
be obvious if $t_{\gamma,j-1}$ were constant,\footnote{\ In which case
  it is the \emph{Markov property}.} but is false for a general random
time: imagine for instance looking at $Z_m=X_{T+m}$ where the time $T$
is defined to be the least index $n$ such that $\xi_{n+1}=s$ (for some
fixed $s\in S$). The suitable property which holds for the random time
$t_{\gamma,j-1}$ is that it is a \emph{stopping time} for the standard
filtration associated with $(X_n)$, meaning that the events
$$
\{t_{\gamma,j-1}=k\}
$$
for any $k\geq 0$, are measurable for the $\sigma$-field
$\sigma(X_1,\ldots, X_k)$ (which is obvious since determining whether
$t_{\gamma,j-1}=k$ can be done by looking at the first $k$ steps of
the original walk).
\par
>From this, it follows that
\begin{align*}
\proba(\xi_{t_{\gamma,j-1}+m}=s_m,\text{ for } 1\leq m\leq k)&=
\proba(\xi_{m}=s_m,\text{ for } 1\leq m\leq k)
\\
&=p(s_1)\cdots p(s_k),
\end{align*}
and the distribution property~(\ref{eq-steps}) then follows
from~(\ref{eq-steps2}). The symmetry property of the distribution is
obvious.
\par
The independence of the steps $\beta_{\gamma,j}$ is also a consequence
of the strong Markov property and computations very similar to the
previous one, except for notational complications. 
\end{proof}

Note that in these auxiliary walks, the initial distribution depends
on $\gamma$, not the steps of the walk (though, $\mathcal{C}$ being
finite, such a dependency would not affect the remainder of the
argument). A further difference with the original walk $(X_n)$ is the
feature that the steps $\beta_{\gamma,j}$ are supported on the whole
of the discrete group $\Gamma^{sc}$, instead of the original finite
set $S$. This is, however, still a symmetric generating set of
$\Gamma^{sc}$. It turns out that random walks involving infinite
generating sets were also already considered in~\cite[Introduction and
Section 1.2]{jouve}, so we can build on this.
\par
The following general sieve result follows quite simply from the
theory developed in~\cite[\S 7]{lsieve} and the adjustments
in~\cite{jouve}.

\begin{proposition}\label{pr-sieve-sc}
  Let $(k, \G, \Gamma)$ be given as before, and define $\Gamma^{sc}$,
  $\mathcal{C}$, $\Gamma^{sc}_{\ideal{p}}$, $\pi_{\ideal{p}}\,:\,
  \Gamma^{sc}\ra \Gamma^{sc}_{\ideal{p}}$ as above.
\par
Let $(Y_j)$, $j\geq 0$, be a random walk on a fixed coset $\gamma\in
\mathcal{C}$ of $\Gamma^{sc}$, with initial step $Y_0$ and with independent,
identically distributed steps $(\beta_j)$, $j\geq 1$, such that the
support of the law of the $\beta_j$ is a generating set of
$\Gamma^{sc}$, and
$$
\proba(\beta_j=g)=\proba(\beta_j=g^{-1}),\quad\quad
\text{ for } j\geq 1,\ g\in \Gamma^{sc}.
$$
\par
Assume that $(\Gamma^{sc},S)$ is a balanced pair: either
$\proba(\beta_j=1)>0$, or there is no surjection $\Gamma^{sc}\ra
\Zz/2\Zz$.
\par
There exists a finite set $R$ of prime ideals in $\Zz_k$, depending
only on $\Gamma$, and constants $c>0$ and $A\geq 0$, depending only on
$k$, $\Gamma$ and the distribution of the steps $(\beta_j)$ such that
the following holds: for any choice of subsets
$$
\Omega_{\ideal{p}}\subset \tilde{\gamma}\Gamma^{sc}_{\ideal{p}},
$$
invariant under $\G(\Fp_{\ideal{p}})$-conjugation, with
$\Omega_{\ideal{p}}=\emptyset$ if $\ideal{p}\in R$, we have
$$
\proba(\pi_{\ideal{p}}(Y_j)\notin \Omega_{\ideal{p}}\text{ for }
N\ideal{p}\leq L) \leq (1+L^Ae^{-cj}) V^{-1}
$$
for any $L\geq 2$, where
$$
V= \sum_{N\ideal{p}\leq L}
{ \frac{|\Omega_{\ideal{p}}|}{|\Gamma^{sc}_{\ideal{p}}|} }.
$$
\end{proposition}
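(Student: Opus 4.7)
The strategy is to apply the abstract large sieve framework for groups from \cite[\S 7]{lsieve}, with the refinements of \cite{jouve} needed to handle the fact that the steps of $(Y_j)$ are supported on an infinite generating set of $\Gamma^{sc}$. Three essential ingredients will be combined: strong approximation for $\Gamma^{sc}$ in the form of Proposition~\ref{pr-sc}, which ensures that reductions modulo distinct primes decouple; property $(\tau)$ for the arithmetic group $\Gamma^{sc}$, providing a spectral gap on each congruence quotient that is uniform over $\ideal{p}$; and the trivial upper bound $|\Gamma^{sc}_{\ideal{p}\ideal{p}'}|\leq L^{2\dim\G}$ coming from the Steinberg formula for orders of finite groups of Lie type (which enters as the polynomial factor $L^A$).

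First I would translate the walk into one on $\Gamma^{sc}$ by setting $Z_j = \tilde{\gamma}^{-1}Y_j$. Since each $\Omega_{\ideal{p}}$ is invariant under $\G(\Fp_{\ideal{p}})$-conjugation, its translate $\pi_{\ideal{p}}(\tilde{\gamma})^{-1}\Omega_{\ideal{p}}$ is still a union of conjugacy classes in $\pi_{\ideal{p}}(\Gamma^{sc})$, which is exactly the structural property required to open up the indicator functions via characters. The non-abelian large sieve inequality (in the form used in \cite[Prop. 7.2]{lsieve}) then yields
\[
\proba\bigl(\pi_{\ideal{p}}(Y_j)\notin \Omega_{\ideal{p}}\text{ for }N\ideal{p}\leq L\bigr) \leq \frac{\Delta_j(L)}{V},
\]
and the remaining task is to bound the large sieve constant by $\Delta_j(L)\leq 1+L^Ae^{-cj}$.

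The constant $\Delta_j(L)$ is expressed via the spectral decomposition of the convolution operator by the step law on $\ell^{2}(\Gamma^{sc}_{\ideal{p}\ideal{p}'})$ for pairs of primes $\ideal{p}\neq\ideal{p}'$ outside $R$. By Proposition~\ref{pr-sc} the joint reduction is surjective onto $\Gamma^{sc}_{\ideal{p}}\times\Gamma^{sc}_{\ideal{p}'}$, so one can do harmonic analysis on this product. The contribution of the trivial representation is $1$; all nontrivial irreducible contributions are controlled by the $j$-th power of the spectral norm of the averaging operator on mean-zero functions, which by property $(\tau)$ for $\Gamma^{sc}$ is bounded by $1-\lambda$ for some $\lambda>0$ independent of $\ideal{p}$. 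Dualising to the required pointwise estimate costs a factor $|\Gamma^{sc}_{\ideal{p}\ideal{p}'}|\leq L^A$. This produces the desired bound with $c=-\log(1-\lambda)>0$.

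The main obstacle is that the step distribution of the $(\beta_j)$ is supported on all of $\Gamma^{sc}$ rather than on a finite symmetric generating set, so \cite[\S 7]{lsieve} does not apply as a black box. This is handled exactly as in \cite[\S 1.2]{jouve}: property $(\tau)$ for $\Gamma^{sc}$ holds with respect to an arbitrary symmetric generating probability measure, and its spectral gap persists after passing to any congruence quotient, with the rate depending only on the measure and on the pair $(k,\G)$. The balanced hypothesis on $(\Gamma^{sc},\text{law of }\beta_j)$, inherited from $(\Gamma,S)$ together with the formula \eqref{eq-steps} (no surjection $\Gamma\to\Zz/2\Zz$ descends to $\Gamma^{sc}$, or else $\proba(\beta_j=1)>0$ follows from $1\in S$), rules out an eigenvalue of absolute value $1$ other than the trivial one, so that no bipartite-type obstruction spoils the spectral gap at any congruence quotient.
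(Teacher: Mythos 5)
Your proposal is correct and follows essentially the same route as the paper: strong approximation (Proposition~\ref{pr-sc}) to decouple reductions at pairs of primes, Clozel's Property $(\tau)$ for the congruence subgroups of $\Gamma^{sc}$ to get a uniform spectral gap (reduced, as in \cite{jouve}, to a finite subset of the support of the step law), and the large sieve machinery of \cite[Ch.~3, Ch.~7]{lsieve} with the trivial bound $|\Gamma^{sc}_{\ideal{p}}\times\Gamma^{sc}_{\ideal{p}'}|\ll L^{A}$ supplying the polynomial factor. The only cosmetic difference is your translation $Z_j=\tilde{\gamma}^{-1}Y_j$ (the translated sets need not remain conjugation-invariant, but the sieve inequality does not require this, and the references work directly with the coset sieve), which does not affect the argument.
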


\begin{proof}
  The main ingredient, beside Proposition~\ref{pr-sc}, is the
  following:
  \par
  [Property $(\tau)$] The group $\Gamma^{sc}$ has Property $(\tau)$ (in the
  sense of Lubotzky) with respect to the family of its congruence
  subgroups, and in particular with respect to the family of subgroups
  of the type
$$
(\ker (\pi_{\ideal{p}}\times \pi_{\ideal{p}'}))
$$
where $\ideal{p}$ and $\ideal{p}'$ run over all prime ideals not in
$R$. This conjecture of Lubotzky and Zimmer was proved by
Clozel~\cite{clozel}.
\par
We then apply the general methods in~\cite[Ch. 3, Ch. 7]{lsieve}
(compare with~\cite[Section 1]{jouve}). More precisely, we first note
that, from Property $(\tau)$, there exists a finite subset (say $S'$)
of $\Gamma^{sc}$ with
$$
\min_{s\in S'}\proba(\beta_j=s)>0
$$
and $\delta>0$ with the following property: for any finite-dimensional
unitary representation
$$
\Gamma^{sc}\fleche{\rho} U(N,\Cc)
$$
that factors through some product of ``prime'' congruence groups,
i.e., $\rho$ is given by
$$
\Gamma^{sc} \ra \Gamma^{sc}/\ker (\pi_{\ideal{p}}\times \pi_{\ideal{p}'})
\fleche{\rho'} U(N,\Cc),
$$
we have 
$$
\min_{s\in S'}\{\|\rho(s)v-v\|\}\geq \delta\|v\|,\quad\quad v\in\Cc^N
$$
provided there is no vector $v\not=0$ which is invariant under $\rho$.
\par
With this, one can follow the proof of~\cite[Prop. 7.2]{lsieve},
or~\cite[Prop. 5]{jouve} to obtain the large sieve bound.
\end{proof}

If we use this technique to control a random walk on $\Gamma$ itself
by splitting into auxiliary walks, this proposition requires one extra
piece of information to be useful: namely, the estimate in terms of
$j$ must be transformed into information in terms of the parameter $n$
of the original walk. Intuitively, since there are $|\mathcal{C}|$
different cosets, and the random walk on $\mathcal{C}$ mixes very
quickly (it is a finite irreducible Markov chain), converging to the
stationary uniform distribution on $\mathcal{C}$ (all cosets being
equally likely), we expect that $X_n$ is, roughly, the $j$-th step of
the auxiliary random walk $Y_{\gamma_n,j}$ for an index $j$ close to
$n/|\mathcal{C}|$.
The following result makes this precise:

\begin{lemma}\label{lm-grand-nombres}
  Let $(k, \G, \Gamma, S, (X_n))$ be given as before, and let
  $(\G^{sc}, \mathcal{C}, (\gamma_n), (t_{\gamma,j})_{\gamma,j},
  (Y_{\gamma,j}))$ be defined as above.
\par
For $n\geq 0$, let $\iota_n$ be the random index such that
$$
X_n=Y_{\gamma_n,\iota_n}.
$$
\par
Then we have
$$
\proba\Bigl(\iota_n<\frac{1}{2}\frac{n+1}{|\mathcal{C}|}\Bigr)\ll \exp(-cn),
$$
for all $n\geq 0$ and some constant $c>0$, depending only on $\mathcal{C}$ and
the distribution of the steps of the Markov chain $\gamma_n$, as does
the implied constant.
\end{lemma}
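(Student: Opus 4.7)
The plan is to recognize $\iota_n$ as an occupation count for the finite Markov chain $(\gamma_n)$ on $\mathcal{C}$ and then invoke a standard Chernoff-type large deviation bound. By construction of the hitting times $t_{\gamma,j}$, the equality $X_n=Y_{\gamma_n,\iota_n}$ means $n=t_{\gamma_n,\iota_n}$, so $\iota_n+1$ coincides with $N_n(\gamma_n)$, where
$$N_n(\gamma):=|\{0\leq k\leq n:\gamma_k=\gamma\}|$$
counts the number of visits of the induced coset walk to $\gamma$ up to time $n$. Partitioning according to the value of $\gamma_n$ and bounding trivially yields
$$\proba\Bigl(\iota_n<\tfrac{1}{2}\tfrac{n+1}{|\mathcal{C}|}\Bigr)\leq|\mathcal{C}|\,\max_{\gamma\in\mathcal{C}}\proba\Bigl(N_n(\gamma)\leq\tfrac{1}{2}\tfrac{n+1}{|\mathcal{C}|}+1\Bigr).$$

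Next, I would verify that $(\gamma_n)$ is a finite, irreducible, aperiodic and reversible Markov chain with uniform stationary distribution. Irreducibility holds because $S$ generates $\Gamma$; the uniform measure on $\mathcal{C}$ is stationary, and reversibility of the kernel $K(\gamma,\gamma')=\sum_{s\in S,\,\gamma s=\gamma'}p(s)$ follows from the symmetry $p(s)=p(s^{-1})$ via the bijection $s\mapsto s^{-1}$ between $\{s\in S:\gamma s=\gamma'\}$ and $\{s\in S:\gamma's=\gamma\}$. Aperiodicity is where the balanced hypothesis on $(\Gamma,S)$ enters: either $1\in S$, which provides self-loops at every coset, or else there is no nontrivial homomorphism $\Gamma\to\Zz/2\Zz$, in which case the Cayley graph of $\Gamma$ is not bipartite and so admits a closed walk of odd length $n$ at $1$, i.e.\ $s_1\cdots s_n=1$ with $s_i\in S$ and $n$ odd; projecting such a walk to $\mathcal{C}$ gives an odd closed walk at the identity coset, so the induced chain is aperiodic in either case.

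One then applies a Chernoff-type concentration inequality for empirical occupation times of reversible finite Markov chains (for instance the bound of Lezaud, or the spectral-gap estimate of Gillman). Since the stationary probability of each state is $1/|\mathcal{C}|$ and the spectral gap of the kernel $K$ depends only on $\mathcal{C}$ and the law of the steps $(\xi_n)$, there exist constants $C,c>0$, depending only on these data, such that
$$\proba\Bigl(N_n(\gamma)\leq\tfrac{1}{2}\tfrac{n+1}{|\mathcal{C}|}+1\Bigr)\leq Ce^{-cn}$$
uniformly in $\gamma\in\mathcal{C}$ and in the starting state. Combining with the union bound above absorbs the harmless factor $|\mathcal{C}|$ and yields the stated estimate.

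The only nontrivial verification in this plan is the aperiodicity of the induced chain from the balanced condition (and identifying the correct off-the-shelf large deviation inequality); once these are in hand, the concentration estimate is classical and the rest is a matter of bookkeeping via the decomposition according to the terminal coset $\gamma_n$.
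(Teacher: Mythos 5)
Your proposal is correct and follows essentially the same route as the paper: both identify $\iota_n$ (up to an additive shift) with the occupation count of the induced finite Markov chain $(\gamma_n)$ on $\mathcal{C}$, decompose according to the terminal coset $\gamma_n$, and then apply a Chernoff-type large deviation bound for occupation measures of reversible finite chains (the paper cites Lezaud explicitly). Your extra verification of aperiodicity via the balanced hypothesis is a reasonable piece of added care, but it does not change the argument.
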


\begin{proof}
  We can express $\iota_n$ concisely by
$$
\iota_n=|\{i\,\mid\, 0\leq i\leq n,\ X_i\equiv X_n\}|,
\quad\text{ so } \quad
\frac{\iota_n}{n+1}=\frac{1}{n+1} 
\sum_{0\leq i\leq n}{\charfun_{\{\gamma_n\}}(\gamma_i)}.
$$
\par
Now fix $\gamma\in \mathcal{C}$ instead, and consider the deterministic variant
$$ 
\kappa_{\gamma,n}=\frac{1}{n+1}\sum_{0\leq i\leq n}{
  \charfun_{\gamma}(\gamma_i)}.
$$
\par
>From basic properties of Markov chains, we know that
\begin{equation}\label{eq-equi}
\lim_{n\ra +\infty}{\proba(\gamma_n=\gamma)}=\frac{1}{|\mathcal{C}|},
\end{equation}
by equidistribution of the random walk $(\gamma_n)$ on the finite set
$\mathcal{C}$ (in particular, this does not depend on $\gamma$). Noting that
$$
\expect(\charfun_{\gamma}(\gamma_i))=\proba(\gamma_i=\gamma),
$$
we may expect by results like the law of large numbers that
$\kappa_{\gamma,n}$ is usually ``close'' to $\frac{1}{|\mathcal{C}|}$, which
makes it clear intuitively why the probability we look for should be
small.
\par
However, because the random variables $\charfun_{\gamma}(\gamma_i)$ are
not independent, we can not simply apply the standard results about
sums of independent random variables.  But because the convergence to
equidistribution~(\ref{eq-equi}) is exponentially fast, fairly
classical works in probability theory have extended the basic
convergence results (weak and strong law of large numbers, central
limit theorem, large deviations results) to this context. 
\par
We precisely need a \emph{large deviation} result, which in the
simplest (classical) context is the Chernov bound. Here we quote for
concreteness from the explicit result in~\cite{lezaud}, though general
bounds go back to Miller, Gillman, Donsker and Varadhan (and Lezaud's
result has been improved in some contexts by Le\'on and Perron).
\par
From~\cite[Th. 1.1, Remark 3]{lezaud}, we obtain
$$
\proba\Bigl(\kappa_{\gamma,n}<\frac{1}{2}\frac{n+1}{|\mathcal{C}|}\Bigr) \leq
e^{\beta/5}|\mathcal{C}|^{1/2}\exp\Bigl(- \frac{\beta(n+1)}{12\cdot
  (2|\mathcal{C}|)^2}\Bigr)
$$
where $\beta>0$ is the spectral gap of the Markov chain $(\gamma_n)$
(precisely, apply \cite[Remark 3]{lezaud} with the data given by
$$
(G,X_i,\pi)=(\mathcal{C},\gamma_{i+1},\text{the uniform distribution on $\mathcal{C}$})
$$
so that $N_q$ in loc. cit. is bounded by $\sqrt{|\mathcal{C}|}$, and the
function $f$ is given by
$$
f(g)=\frac{1}{|\mathcal{C}|}-\charfun_{\gamma}(g),\quad \text{ for }
g\in \mathcal{C},
$$
while the constant denoted $\gamma$ in loc. cit. is
$(2|\mathcal{C}|)^{-1}$). Note the upper bound we derived does not depend on
$\gamma\in \mathcal{C}$.
\par 
Finally, to come back to the actual index $\iota_n$, we simply write
\begin{align*}
  \proba\Bigl(\iota_n<\frac{1}{2}\frac{n+1}{|\mathcal{C}|}\Bigr)&=
  \sum_{\gamma\in \mathcal{C}}{
    \proba\Bigl(\kappa_{\gamma,n}<\frac{1}{2}\frac{n+1}{|\mathcal{C}|} \text{
      and } \gamma_n=\gamma\Bigr)
  }\\
  &\leq \sum_{\gamma\in \mathcal{C}}{
    \proba\Bigl(\kappa_{\gamma,n}<\frac{1}{2}\frac{n+1}{|\mathcal{C}|}\Bigr)
  }\\
  &\leq e^{\beta/5}|\mathcal{C}|^{3/2}\exp\Bigl(- \frac{\beta(n+1)}{12\cdot
    (2|\mathcal{C}|)^2}\Bigr).
\end{align*}
\par
Cleaning up the constants, this clearly implies the result as stated
(and is in fact much more precise).
\end{proof}

This lemma means that, except for exceptions occurring with
exponentially decaying probability, the sieve statement for the
auxiliary walks leads to a sieve statement for the original one, where
the dependency on the lenght behaves as expected.

\begin{corollary}\label{cor-sieve-semisimple}
  Let $k$, $\Zz_k$, $\G$, $\Gamma$, $\Gamma^{sc}$, $\pi_{\ideal{p}}$,
  $\Gamma^{sc}_{\ideal{p}}$, $\mathcal{C}$ be as above, in particular $\Gamma$
  is Zariski-dense in $\G$.
\par
Let $(X_n)$, $n\geq 0$, be a random walk on $\Gamma$ starting at the
origin with independent, identically distributed steps $\xi_n$, $n\geq
1$, supported on a finite symmetric generating set $S$ of $\Gamma$
with
$$
\proba(\xi_n=s)=\proba(\xi_n=s^{-1})>0,\quad\quad \text{ for } n\geq
1,\ s\in S,
$$
and such that $(\Gamma,S)$ is balanced in the sense described at the
beginning of Section~\ref{sec-sieve}.
\par
There exists a finite set $R$ of prime ideals in $\Zz_k$, depending
only on $\Gamma$, and constants $c>0$ and $A\geq 0$, $B\geq 0$,
depending only on $k$, $\Gamma$ and the distribution of the steps
$(\xi_n)$ such that the following holds: for any choice of subsets
$$
\Omega_{\ideal{p}}\subset \Gamma_{\ideal{p}}=\pi_{\ideal{p}}(\Gamma),
$$
invariant under $\Gamma_{\ideal{p}}$-conjugation, with
$\Omega_{\ideal{p}}=\emptyset$ if $\ideal{p}\in R$, we have
\begin{equation}\label{eq-sieve-semisimple}
\proba(\pi_{\ideal{p}}(X_n)\notin \Omega_{\ideal{p}}\text{ for }
N\ideal{p}\leq L) \leq 
Be^{-cn}+ n(1+L^Ae^{-cn}) \sum_{\gamma\in \mathcal{C}}\frac{1}{V_{\gamma}}
\end{equation}
for any $L\geq 2$, where
$$
V_{\gamma}= 
\sum_{N\ideal{p}\leq L} { 
\frac{|\Omega_{\ideal{p}}\cap \gamma \Gamma^{sc}_{\ideal{p}}|} 
{|\Gamma^{sc}_{\ideal{p}}|}}.
$$
\end{corollary}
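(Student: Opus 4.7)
The plan is to reduce the sieve inequality for $(X_n)$ on $\Gamma$ to Proposition~\ref{pr-sieve-sc} applied to each of the auxiliary random walks $(Y_{\gamma,j})_{j\geq 0}$ on the cosets $\gamma\in \mathcal{C}=\Gamma^{sc}\backslash \Gamma$, using Lemma~\ref{lm-grand-nombres} to show that the random index $\iota_n$ defined by $X_n=Y_{\gamma_n,\iota_n}$ is of order at least $n/|\mathcal{C}|$, except on an event of exponentially small probability.

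Setting $E_n=\{\pi_{\ideal{p}}(X_n)\notin \Omega_{\ideal{p}}\text{ for all }N\ideal{p}\leq L\}$, I would split
$$
\proba(E_n) \leq \proba\Bigl(\iota_n < \tfrac{n+1}{2|\mathcal{C}|}\Bigr) + \sum_{\gamma\in \mathcal{C}}\proba\Bigl(E_n,\ \gamma_n=\gamma,\ \iota_n\geq \tfrac{n+1}{2|\mathcal{C}|}\Bigr),
$$
and apply Lemma~\ref{lm-grand-nombres} to bound the first summand by $Be^{-cn}$ for suitable $B$, $c>0$. For the second, on the event $\{\gamma_n=\gamma\}$ we have $X_n=Y_{\gamma,\iota_n}$ lying in $\gamma$, so $E_n$ forces $\pi_{\ideal{p}}(Y_{\gamma,\iota_n})\notin \Omega_{\ideal{p}}\cap \pi_{\ideal{p}}(\gamma)\Gamma^{sc}_{\ideal{p}}$ for every $N\ideal{p}\leq L$. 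A union bound over the possible values of $\iota_n$ therefore gives
$$
\proba\Bigl(E_n,\,\gamma_n=\gamma,\,\iota_n\geq \tfrac{n+1}{2|\mathcal{C}|}\Bigr)\leq \sum_{j=\lceil (n+1)/(2|\mathcal{C}|)\rceil}^{n}\proba\bigl(\pi_{\ideal{p}}(Y_{\gamma,j})\notin \Omega_{\ideal{p}}\cap \pi_{\ideal{p}}(\gamma)\Gamma^{sc}_{\ideal{p}}\text{ for }N\ideal{p}\leq L\bigr).
$$

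To each summand I would apply Proposition~\ref{pr-sieve-sc} to the walk $(Y_{\gamma,j})_j$, whose hypotheses are provided by the lemma preceding it (i.i.d., symmetric, $\Gamma^{sc}$-valued steps $\beta_{\gamma,j}$ supported on a generating set of $\Gamma^{sc}$). The resulting bound $(1+L^Ae^{-cj})V_{\gamma}^{-1}$ is monotone decreasing in $j$, so summing the at most $n$ terms gives $n(1+L^Ae^{-c(n+1)/(2|\mathcal{C}|)})V_{\gamma}^{-1}$. Absorbing the factor $1/(2|\mathcal{C}|)$ into $c$ and summing over $\gamma\in \mathcal{C}$ yields the estimate~(\ref{eq-sieve-semisimple}).

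The main technical point to dispatch is the verification that the hypotheses of Proposition~\ref{pr-sieve-sc} genuinely transfer to the auxiliary walks. The balance condition for $(\Gamma^{sc},\mathrm{supp}\,\beta_{\gamma,j})$ is automatic whenever $1\in S$, since then $\proba(\beta_{\gamma,j}=1)\geq p(1)>0$; and the conjugation-invariance of $\Omega_{\ideal{p}}\cap \pi_{\ideal{p}}(\gamma)\Gamma^{sc}_{\ideal{p}}$ under $\G(\Fp_{\ideal{p}})$ follows from the $\Gamma_{\ideal{p}}$-invariance of $\Omega_{\ideal{p}}$, after enlarging $R$ if necessary so that $\pi_{\ideal{p}}(\gamma)\Gamma^{sc}_{\ideal{p}}$ is stable under the requisite conjugation. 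Once these bookkeeping matters are handled, the corollary reduces to the telescoping argument above.
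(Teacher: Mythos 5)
Your argument is correct and follows essentially the same route as the paper's proof: decompose the event according to the coset $\gamma_n$ and the random index $\iota_n$, dispose of the indices $j<\tfrac{1}{2}(n+1)/|\mathcal{C}|$ via Lemma~\ref{lm-grand-nombres}, and apply Proposition~\ref{pr-sieve-sc} to the auxiliary walks $(Y_{\gamma,j})$ for the remaining at most $n$ values of $j$, then sum over $j$ and $\gamma$. The transfer of hypotheses to the auxiliary walks that you flag at the end is indeed the only bookkeeping point, and your handling of it is consistent with what the paper leaves implicit.
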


\begin{proof}
  With the notation for the auxiliary walks $(Y_{\gamma,j})$
  previously introduced, and writing
$$
\Omega_{\gamma,\ideal{p}}=\Omega_{\ideal{p}}\cap \gamma
\Gamma^{sc}_{\ideal{p}},
$$
the event considered is the disjoint union, over $\gamma\in \mathcal{C}$ and
$j\geq 0$, of the events
$$
S_{\gamma,j}=\{\iota_n=j\text{ and }
\pi_{\ideal{p}}(Y_{\gamma,j})\notin \Omega_{\gamma,\ideal{p}}\text{ for }
N\ideal{p}\leq L\}.
$$
\par
We have $\iota_n\leq n$ and hence $S_{\gamma,j}=\emptyset$ for all
$j>n$. Moreover, by Lemma~\ref{lm-grand-nombres}, the probability of
the union of all $S_{\gamma,j}$ with $j<\demi \tfrac{n}{|\mathcal{C}|}$ is at
most
$$
\proba\Bigl(\iota_n<\frac{1}{2}\frac{n}{|\mathcal{C}|}\Bigr)\ll \exp(-c_1n),
$$
for some constant $c_1>0$.  For others values of $j$, we have
\begin{align*}
  \proba(S_{\gamma,j})&\leq \proba(\pi_{\ideal{p}}(Y_{\gamma,j})\notin
  \Omega_{\gamma,\ideal{p}}\text{ for } N\ideal{p}\leq L) \leq
  (1+L^A\exp(-c_2j))\Bigl( \sum_{N\ideal{p}\leq L} {
    \frac{|\Omega_{\gamma,\ideal{p}}|}{|\Gamma^{sc}_{\ideal{p}}|} }
  \Bigr)^{-1}
\intertext{by Proposition~\ref{pr-sieve-sc}, and this is}
  &\leq \Bigl(1+L^A\exp\Bigl(-\frac{c_2n}{2|\mathcal{C}|}\Bigr)\Bigr)\Bigl(
  \sum_{N\ideal{p}\leq L} {
    \frac{|\Omega_{\gamma,\ideal{p}}|}{|\Gamma^{sc}_{\ideal{p}}|} }
  \Bigr)^{-1},
\end{align*}
for some constant $c_2>0$.
\par
Summing over the values of $j$ and $\gamma$, we obtain the desired
statement, with the constant $c$ given by $\min(c_1,c_2/(2|\mathcal{C}|))$. 
\end{proof}

\begin{remark}
  This result is slightly weaker than the sieve bound for the simply
  connected case, but it is very close in applications. The
  intervention of the cosets $\gamma$ in the sieve bound can not be
  dispensed with in general (i.e., Proposition~\ref{pr-sieve-sc} fails
  if $\Gamma^{sc}$ is replaced with $\Gamma$): suppose, say, that
  $|\mathcal{C}|=2$ with $\Gamma^{sc}_{\ideal{p}}$ also of index $2$ in
  $\Gamma_{\ideal{p}}$; then, if $\Omega_{\ideal{p}}$ is the
  non-trivial coset of $\Gamma^{sc}_{\ideal{p}}$, we have
$$
\proba(\pi_{\ideal{p}}(X_n)\notin \Omega_{\ideal{p}} \text{ for
}N\ideal{p}\leq L)\geq \proba(X_n\in \Gamma^{sc})
$$
which typically converges to $\demi$ as $n\ra +\infty$, while an
hypothetical estimate like
$$
(1+L^A\exp(-cn))\Bigl(\sum_{N\ideal{p}\leq L}{
\frac{|\Omega_{\ideal{p}}|}
{|G_{\ideal{p}}|}}\Bigr)^{-1}=
2(1+L^A\exp(-cn))\pi(L)^{-1}
$$
with $c>0$ for this probability would show it to go to zero
exponentially fast as $n\ra +\infty$, after selecting
$L=\exp(cn/A)$. (On the other hand, in
Corollary~\ref{cor-sieve-semisimple}, the term involving the
non-trivial coset $\Gamma-\Gamma^{sc}$ is the inverse of
$$
\sum_{N\ideal{p}\leq L}{ \frac{|\Omega_{\ideal{p}}\cap
    (\Gamma_{\ideal{p}}-\Gamma^{sc}_{\ideal{p}})|} {|\Gamma^{sc}_{\ideal{p}}|}}=0,
$$
so the proposition does work in this context.)
\end{remark}

\begin{remark}\label{rm-reductive-problem}
  We are considering semisimple groups here because sieving in all
  arithmetic subgroups of reductive groups is problematic for
  well-known reasons: if there is a non-trivial central torus
  $\T\subset Z(\G)$, sieving questions might involve unknown issues
  like the existence of infinitely many Mersenne primes. Moreover,
  although it is tempting to try to apply once more the strategy 
described in the discussion following Proposition~\ref{pr-sc}, 
 the subgroup $Z(\G)$ does not necessarily have finite index in $\G$
  (say if $\G=\GL(n)$ and $k$ is a real quadratic field, so that
  there are infinitely many units), and usually the random walk will
  not come back infinitely often to each coset.
\end{remark}

\begin{remark}
  It is very likely that what we have done in this section is valid
  when $\Gamma$ is simply a finitely generated Zariski-dense subgroup of
  $\G(\Zz_k)$, but not necessarily of finite index (due to recent
  breakthroughs by Helfgott~\cite{helfgott},
  Bourgain--Gamburd~\cite{b-g}, Breuillard--Green--Tao~\cite{bgt} and
  Pyber--Szab\'o~\cite{psz} playing a key role in the proof by Salehi-Golsefidy--Varj\'u~(\cite{salehi-g-varju}) that Zariski-dense subgroups of
  arithmetic groups have Property $(\tau)$ with respect to congruence
  subgroups.)
\end{remark}

\section{Splittings fields of elements of reductive groups}
\label{sec-final}

In this section, we will prove our main theorem which generalizes
Theorem~\ref{th-main-easy}. Let $\G$ be a connected linear algebraic group
defined over a number field $k$.   Let $\Gamma\subseteq \G(k)$ be an
arithmetic subgroup of $\G$ and let $S$ be a finite symmetric set of
generators, such that $(\Gamma,S)$ is balanced (in the sense of the
beginning of Section~\ref{sec-sieve}).  Assume that $\Gamma$ is
Zariski-dense in $\G$.
 
Let $\rho\colon \G\ra \GL(m)$ be a faithful representation of $\G$
defined over $k$. For each element $g\in \G(k)$, the field $k_g$ is
defined as the splitting over $k$ of the polynomial
$\det(T-\rho(g))\in k[T]$.  From Lemma~\ref{L:kg facts}(i), we
know that $k_g$ does not depend on the choice of $\rho$.
 
As in \S\ref{sec-sieve}, let $(\xi_n)$ be a sequence of independent,
identically distributed, random variables taking values in $S$ such
that $\proba(\xi_n=s)=\proba(\xi_n=s^{-1})>0$ for all $s\in S$.  The
sequence $(X_n)$ defined recursively by
$$
X_0=1\in \Gamma,\quad\quad X_{n+1}=X_n\xi_{n+1},
$$
gives a random walk on $\Gamma$.
  
For a reductive group $\G$, we defined in \S\ref{galoistori} an
extension $k_\G/k$ and groups $W(\G)$ and $\Pi(\G)$.  

When $\G$ is not reductive, we set $k_\G:= k_{\G/R_u(\G)}$ and make
the ad hoc definitions $W(\G):=W(\G/R_u(\G))$ and
$\Pi(\G):=\Pi(\G/R_u(\G))$, where $R_u(\G)$ is the unipotent radical
of $\G$.

\begin{theorem}\label{th-main2}  
Fix notation and assumptions as above.
\begin{romanenum}
\item
We have
\[
\lim_{n\to \infty} \proba\big( \Gal(k_{X_n}/k)\cong \Pi(\G) \big) = 1.
\]
\item
If $\G$ is semisimple, then there exists a constant $c>1$ such that
\[
\proba\big( \Gal(k_{X_n}/k)\cong \Pi(\G) \big)= 1 + O(c^{-n})
\]
for all $n\geq 1$. 
\item There exists a constant $c>1$ such that
\[
\proba\big( \Gal(k_{\G}k_{X_n}/k_\G)\cong W(\G) \big) = 1 + O(c^{-n})
\]
for all $n\geq 1$.  
\end{romanenum}
\par
The constants $c$ and the implicit constants depend only on the group
$\G$, the generating set $S$, and the distribution of the $\xi_n$.
\end{theorem}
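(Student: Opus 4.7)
The plan is to reduce the general assertion to a sieve statement about conjugacy classes of $W(\G)$ modulo primes, and then combine the local/global Frobenius comparison with Jordan's lemma on finite groups.

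First, by Lemma~\ref{L:kg facts}(ii) the splitting field $k_{X_n}$ is unchanged if we replace $\G$ by $\G/R_u(\G)$; since $R_u(\G)$ is a normal $k$-subgroup, the image $\pi(\Gamma)$ is an arithmetic, Zariski-dense subgroup of $\G/R_u(\G)$ and the random walk projects accordingly, so I may assume $\G$ is reductive. Combined with Lemma~\ref{lm-tg-dg}(iii), it suffices to show that, outside an event of small probability, $X_n$ lies in $\G(k)\setminus Y(k)$ (hence $X_n$ is regular semisimple and $k_{X_n}=k_{\T_{X_n}}$) and the representation $\varphi_{X_n}:\Gal(\kbar/k)\to \Pi(\G)$ is surjective. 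Since the composition $\Gal(\kbar/k)\to \Pi(\G)\to \Pi(\G)/W(\G)=\Gal(k_\G/k)$ is always surjective, surjectivity of $\varphi_{X_n}$ is equivalent to surjectivity of $\varphi_{X_n}|_{\Gal(\kbar/k_\G)}$ onto $W(\G)$, which by Jordan's lemma reduces to showing that this image meets every $W(\G)$-conjugacy class.

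For each class $C\in W(\G)^\sharp$, introduce the conjugation-invariant set
\[
\Omega_{\ideal{p}}^C=\{g\in \G(\Fp_{\ideal{p}})_{sr}\,:\,\theta(g)=C\}\subseteq \G(\Fp_{\ideal{p}})
\]
for primes $\ideal{p}\in\calP$ (which form a set of positive density by Chebotarev). By Proposition~\ref{P:Frobenius comparison 1}, if $\pi_{\ideal{p}}(X_n)\in\Omega_{\ideal{p}}^C$ then $\varphi_{X_n}(\Frob_{\ideal{p}})\in C$; moreover $k_\G\subseteq k_{\ideal{p}}$ for $\ideal{p}\in\calP$, so $\Frob_{\ideal{p}}\in\Gal(\kbar/k_\G)$ and the image $\varphi_{X_n}(\Gal(\kbar/k_\G))$ actually hits $C$. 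Thus the failure of the conclusion for a given $C$ is contained in the sieve event that $\pi_{\ideal{p}}(X_n)\notin \Omega_{\ideal{p}}^C$ for all $\ideal{p}\in\calP$ with $N\ideal{p}\leq L$; note that any single success automatically forces $X_n$ to be regular semisimple, eliminating $Y$ as a separate concern.

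Now, for the semisimple case (parts (ii) and (iii)), I apply Corollary~\ref{cor-sieve-semisimple} to each $\Omega_{\ideal{p}}^C$. The refined equidistribution of Proposition~\ref{P:Carter approximation2} provides
\[
\frac{|\Omega_{\ideal{p}}^C\cap \gamma\Gamma^{sc}_{\ideal{p}}|}{|\Gamma^{sc}_{\ideal{p}}|}=\frac{|C|}{|W(\G)|}\bigl(1+O(N\ideal{p}^{-1})\bigr)
\]
uniformly in the coset $\gamma$; hence each $V_\gamma\gg L/\log L$ by Chebotarev on $\calP$. Balancing $L^A e^{-cn}$ by choosing $L=\exp(cn/(2A))$ in~\eqref{eq-sieve-semisimple}, the sieve gives an exponentially decaying bound for each $C$; summing over the finitely many conjugacy classes yields (ii) outright and (iii) in the semisimple case. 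For (i) one only needs convergence to $1$, which is weaker.

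The main obstacle is extending (iii) to the reductive but not semisimple case and (i) beyond the semisimple setting, since Corollary~\ref{cor-sieve-semisimple} is stated for semisimple groups and Remark~\ref{rm-reductive-problem} explains the difficulty with a central torus. I would bypass this by exploiting that $W(\G)$, $k_\G$, $\theta$ and the sets $\Omega_{\ideal{p}}^C$ all factor through the semisimple quotient $\G^{ss}=\G/Z(\G)^0$: the image $\bar\Gamma$ of $\Gamma$ in $\G^{ss}(k)$ is commensurable with an arithmetic subgroup of the semisimple group $\G^{ss}$ and remains Zariski-dense, so the semisimple sieve applies to the induced random walk $\bar X_n=\pi(X_n)$. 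Since $\pi_{\ideal{p}}(\bar X_n)\in\bar\Omega_{\ideal{p}}^C$ is equivalent to $\pi_{\ideal{p}}(X_n)\in\Omega_{\ideal{p}}^C$, the exponential bound transfers, giving (iii); for part (i) in full generality, the same reasoning combined with a trivial bound on the transient central-torus contribution delivers convergence to $1$ (possibly at a polynomial rate, in accordance with the torus example in the introduction).
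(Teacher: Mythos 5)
Your core semisimple argument (sieve on the sets $\Omega_{\ideal{p}}^C$, Carter--Fulman equidistribution, the local--global Frobenius comparison, Jordan's lemma) is exactly the paper's Proposition~\ref{pr-main}, but two steps of your reduction contain genuine gaps. First, the claim that ``any single success automatically forces $X_n$ to be regular semisimple, eliminating $Y$ as a separate concern'' is wrong. Regular semisimplicity only removes the component $Y_1$ of $Y$; the variety $Y$ of Lemma~\ref{lm-tg-dg}(iii) also contains the locus $Y_2$ where the weights $\chi(g)$, $\chi\in\Omega$, fail to be pairwise distinct, and it is precisely $g\notin Y_2$ that upgrades the automatic inclusion $k_g\subseteq k_{\T_g}$ to the equality $k_g=k_{\T_g}$. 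Without excluding $Y_2$, surjectivity of $\varphi_{X_n}$ tells you about $\Gal(k_{\T_{X_n}}/k)$, not about $\Gal(k_{X_n}/k)$ (e.g.\ a regular $g$ with $\alpha(g)=-1$ in $\SL(2)$ under $\Ad$ has $k_g=k\subsetneq k_{\T_g}$ in general). The paper therefore invokes the non-concentration estimate of Lemma~\ref{L:closed} for $Y$ even in the semisimple case; and in the reductive, non-semisimple case this estimate is only qualitative (Lemma~\ref{L:closed}(i), via a separate sieve on the central torus), which is the actual reason part (i) is stated without a rate. Your closing sentence gestures at a ``trivial bound on the transient central-torus contribution,'' but this contradicts your earlier dismissal of $Y$ and understates the work required.

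Second, your passage from the reductive $\G$ to $\G^{ss}=\G/Z(\G)^0$ rests on the assertion that $\theta$, the sets $\Omega_{\ideal{p}}^C$, and the conclusion of Proposition~\ref{P:Frobenius comparison 1} all ``factor through'' the quotient, so that the sieve events for $X_n$ and $\bar X_n$ coincide. As the paper sets things up, Sections~\ref{SS:reduction} and~\ref{SS:finite fields} (the conjugacy-class bijection, $\theta$, Proposition~\ref{P:Carter approximation2}, and the Frobenius comparison) are only established for semisimple groups, so you cannot apply them directly to the reductive $\G$ to deduce $\varphi_{X_n}(\Frob_{\ideal{p}})\in C$ from $\pi_{\ideal{p}}(X_n)\in\Omega_{\ideal{p}}^C$; and the compatibility of $\varphi_{\T}$ with $\varphi_{\T/Z(\G)^0}$ under the identification $W(\G)\cong W(\G^{ss})$ is asserted rather than proved. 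The paper avoids all of this with a softer argument: the inclusion $k_{X_n'}\subseteq k_{X_n}$ (Lemma~\ref{L:kX inclusion}, proved by pulling back characters along $X(\T')\to X(\T)$), combined with the a priori fact that $\Gal(Kk_{X_n}/K)$ is a subquotient of $W(\G)$ and the equality $|W(\G)|=|W(\G')|$, forces $\Gal(Kk_{X_n}/K)\cong W(\G)$ whenever $\Gal(Kk_{X_n'}/K)\cong W(\G')$. You should either supply the omitted compatibilities or substitute this field-theoretic comparison.
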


Theorem~\ref{th-main-easy}(i) is a consequence of
Lemma~\ref{lm-tg-dg}(i).  We obtain the remaining parts of
Theorem~\ref{th-main-easy} from Theorem~\ref{th-main2} by taking
$p(s)=|S|^{-1}$ for all $s\in S$, which then implies by definition
that
$$
\proba(X_n\in A)=\frac{1}{|S|^n}|\{ w=(s_1,\ldots, s_n)\in S^n\,\mid\,
s_1\cdots s_n\in A\}|
$$
for any set $A\subset \Gamma$.    
\par
Our first lemma is a version, in our context, of a
``non-concentration'' estimate on subvarieties.

\begin{lemma}\label{L:closed}  
  Keep the set-up as above and let $Y\subsetneq \G$ be a closed
  subvariety that is stable under conjugation by $\G$.
\begin{romanenum}
\item
We have $\lim_{n\to\infty} \proba\big(X_n \in Y(k) \big) =0$.
\item
If $\G$ is semisimple, then there exists a constant $c>1$ such that
\[
\proba\big(X_n \in Y(k) \big) = O(c^{-n}).
\]
The constant $c$ and the implicit constant depend only on the group $\G$,  the generating set $S$, the distribution of the $\xi_n$, and
$Y$.
\end{romanenum}
\end{lemma}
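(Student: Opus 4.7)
The plan is to combine the sieve of Corollary~\ref{cor-sieve-semisimple} with Lang-Weil type estimates for $|Y(\Fp_\p)|$, deducing part (ii) directly and reducing part (i) to (ii) plus an equidistribution argument on tori.

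For part (ii), with $\G$ semisimple, set $\Omega_\p := \Gamma_\p\setminus Y(\Fp_\p)\subset \Gamma_\p$. Since $Y$ is $\G$-conjugation stable and reduction commutes with conjugation, each $\Omega_\p$ is $\Gamma_\p$-conjugation invariant. The event $X_n\in Y(k)$ forces $\pi_\p(X_n)\notin \Omega_\p$ for every good prime $\p$, so
\[
\proba(X_n\in Y(k)) \leq \proba\bigl(\pi_\p(X_n)\notin\Omega_\p\text{ for all } N\p\leq L\bigr).
\]
As $Y$ is a proper closed subvariety of the irreducible $\dim\G$-dimensional $\G$, Lang-Weil yields $|Y(\Fp_\p)|=O(N\p^{\dim\G - 1})$, while $|\Gamma^{sc}_\p|=|\varphi_\p(\G^{sc}(\Fp_\p))|\asymp N\p^{\dim\G}$ for $\p$ outside a finite bad set (using Proposition~\ref{pr-sc}). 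Hence for each coset $\gamma$ of $\Gamma^{sc}$ in $\Gamma$,
\[
\frac{|\Omega_\p\cap \gamma\Gamma^{sc}_\p|}{|\Gamma^{sc}_\p|} = 1 - O(N\p^{-1}),
\]
so summing gives $V_\gamma\geq c\,\pi_k(L)$ for $L$ large, where $\pi_k(L)$ counts prime ideals of $\Zz_k$ of norm at most $L$. Choosing $L = e^{\alpha n}$ with $\alpha>0$ small enough that $L^A e^{-cn}=O(1)$, Corollary~\ref{cor-sieve-semisimple} then delivers $\proba(X_n\in Y(k))\ll e^{-c'n}$ for some $c'>0$.

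For part (i), with $\G$ an arbitrary connected linear algebraic group, the plan is a two-step reduction to part (ii). First, push through $\pi:\G\to \G'=\G/R_u(\G)$: the preimage $\tilde Y:=\pi^{-1}(\overline{\pi(Y)})\supseteq Y$ is $\G$-conjugation stable and closed, so provided $\tilde Y\subsetneq \G$ we reduce to the reductive case by replacing $Y$ with $\overline{\pi(Y)}\subsetneq\G'$ and the walk by its image in an arithmetic group of $\G'$ (finitely generated and Zariski-dense by standard facts). Second, with $\G$ reductive, use the isogeny $T\times\G^{der}\to \G$ (with $T=Z(\G)^\circ$) to decompose the walk into components $X_n^T$ on a finitely generated abelian group and $X_n^{der}$ on an arithmetic group in the semisimple $\G^{der}$. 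A case split on whether $Y$ has proper image in $\G/\G^{der}$ (handled by a local central limit bound on the abelian walk, yielding only polynomial decay) or in $\G/T$ (handled by part (ii), yielding exponential decay) completes the proof.

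The main obstacle will be verifying the hypothesis $\tilde Y\subsetneq \G$ in the first reduction of part (i), equivalently that $\pi(Y)$ cannot be Zariski-dense in $\G'$. This rests on the structural fact that for regular semisimple $y\in\G$ the $R_u(\G)$-conjugation orbit has full dimension $\dim R_u(\G)$ (since $Z_\G(y)$ is a torus, which meets $R_u(\G)$ trivially), so a dense image $\pi(Y)$ coupled with conjugation-invariance of $Y$ would force $Y$ to contain a subvariety of dimension $\dim\G'+\dim R_u(\G)=\dim\G$, contradicting properness. Granted this descent, the sieve bound of the previous paragraph combined with abelian equidistribution supplies the qualitative convergence in (i), with the toral factor accounting for the failure of exponential decay in general as illustrated by example (3) of the introduction.
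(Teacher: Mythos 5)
Your part (ii) is correct and is exactly the paper's argument: the sieving sets are the complements of the reductions of $Y$, Lang--Weil gives $V_\gamma\gg L/\log L$, and the choice $L=e^{\alpha n}$ in Corollary~\ref{cor-sieve-semisimple} yields the exponential decay. Nothing to add there.

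For part (i) in the reductive case your route is again the paper's (split the walk through the isogeny $\T\times\G^{\mathrm{der}}\to\G$ and handle the central and semisimple components separately), but the case distinction you make --- and which the paper's own proof makes as well --- is not exhaustive. A proper closed conjugation-stable $Y$ can have dense image in \emph{both} $\G/\G^{\mathrm{der}}$ and $\G/\T$: take $\G=\GG_m\times\SL(2)$ and $Y=\{(z,x)\,:\,z=\Tr(x)\}$. So properness of $Y$ does not place you in one of your two cases, and ``graph-like'' $Y$ must also be treated. The natural repair is to fiber over the central torus: the set $Y_2=\{z\in\T\,:\,\{z\}\times\G^{\mathrm{der}}\subseteq\tilde Y\}$ is a proper closed subvariety of $\T$, the event $Z_n\in Y_2$ is handled by the abelian argument, and for $z\notin Y_2$ the fibres $\tilde Y_z\subsetneq\G^{\mathrm{der}}$ are conjugation-stable of bounded degree, so part (ii) applies to each of them with uniform constants. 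One must then still control the event $\tilde X_n\in\tilde Y_{Z_n}$, in which the target variety moves with the walk; since $Z_n$ and $\tilde X_n$ are driven by the same steps, conditioning on $Z_n$ destroys the i.i.d.\ structure that the sieve needs, and this point requires an explicit additional argument.

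The step that is genuinely yours --- the descent from general $\G$ to $\G'=\G/R_u(\G)$ --- rests on a false claim: a proper closed conjugation-stable $Y$ can perfectly well have Zariski-dense image in $\G'$. Take $\G=\SL(2)\times\GG_a$ and $Y=\SL(2)\times\{0\}$: this is closed, proper and conjugation-stable, yet $\pi(Y)=\G'$. Your dimension count breaks down because $Z_\G(y)$ need not be a torus when $\G$ is not reductive; it may contain all of $R_u(\G)$, in which case the $R_u(\G)$-conjugation orbit of $y$ is a single point rather than of dimension $\dim R_u(\G)$. (The conclusion of the lemma still holds in this example --- $\proba(X_n\in Y(k))$ decays like the return probability of a lattice walk, hence only polynomially --- but it must be obtained by a direct abelian-walk argument in the unipotent directions, not by descent to $\G'$. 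The paper avoids the issue entirely: by Lemma~\ref{L:kg facts} the splitting fields depend only on the image in $\G/R_u(\G)$, so Lemma~\ref{L:closed} is only ever invoked for reductive, indeed mostly semisimple, $\G$.)
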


\begin{proof}
  We start with the proof of (ii), which is more precise.  Choose a
  model $\mathcal{G}$ over $\Zz_k[R^{-1}]$ of $\G$ where $R$ is a
  finite set of maximal ideals of $\Zz_k$.  Since $\Gamma$ is finitely
  generated, we can choose $R$ so that $\Gamma\subseteq
  \mathcal{G}(\Zz_k[R^{-1}])$.  For $\p\notin R$, let $\pi_{\p}\colon
  \Gamma \to \mathcal{G}(\Fp_{\p})$ be the reduction modulo $\p$ map.
  Let $\mathcal{Y}$ be the Zariski closure of $Y$ in $\mathcal{G}$,
  and for each $\p \notin R$, define the set
\[
\Omega_{\p} = \mathcal{G}(\Fp_{\p}) - \mathcal{Y}(\Fp_{\p}).
\]
Our assumption that $Y$ is stable under conjugation by $\G$ implies
that $\Omega_{\p}$ is stable under conjugation by
$\mathcal{G}(\Fp_{\p})$.  If $X_n\in \Gamma$ belongs to $Y(k)$, then
$\pi_{\p}(X_n) \notin \Omega_{\p}$ for all $\p\not \in R$.  Thus for
any $L\geq 2$, we have
\[
\proba\big(X_n \in Y(k) \big) \leq
\proba\bigl(\pi_{\ideal{p}}(X_n)\notin \Omega_{\ideal{p}}\text{ for
  all } \ideal{p}\notin R\text{ with } N\ideal{p}\leq L\bigr).
\]
We are now in a position to apply~(\ref{eq-sieve-semisimple}) to
derive the upper bound
\[
\proba\big(X_n \in Y(k) \big) \leq Be^{-cn}+ n(1+L^Ae^{-cn})
\sum_{\gamma\in \mathcal{C}}\frac{1}{V_{\gamma}},
\]
where $A\geq 0$, $B\geq 0$ and $c>0$ are constants depending only on
$k$, $\mathcal{C}=\Gamma^{sc}\backslash\Gamma$, and
\[
V_{\gamma}:=\sum_{\stacksum{N\ideal{p}\leq L}{\ideal{p}\notin R}}{
  \frac{|\Omega_{\ideal{p}}\cap \gamma \Gamma^{sc}_{\ideal{p}}|}
  {|\Gamma^{sc}_{\ideal{p}}|} } =\sum_{\stacksum{N\ideal{p}\leq
    L}{\ideal{p}\notin R}} \Big( 1- \frac{|\mathcal{Y}(\Fp_{\p})\cap
  \gamma \Gamma^{sc}_{\ideal{p}}|} {|\Gamma^{sc}_{\ideal{p}}|} \Big).
\]
\par
Since $Y$ has smaller dimension that $\G$, the Lang-Weil bounds (see,
e.g.,~\cite[p. 628]{katz}) imply that
$$
|\mathcal{Y}(\Fp_{\p})|/|\mathcal{G}(\Fp_{\p})| = O(1/N(\ideal{p})).
$$
\par
This, together with $[\mathcal{G}(\Fp_{\p}) : \Gamma^{sc}_{\p}] \ll
1$, gives
\[
V_\gamma \gg \sum_{\stacksum{N\ideal{p}\leq L}{\ideal{p}\notin R}}
\Big( 1 + O\big(N(\p)^{-1}\big) \Big) \gg L/\log L,
\]
where the last inequality holds for all $L$ sufficiently large.  Thus
$$
\proba\big(X_n \in Y(k) \big) \ll e^{-cn}+
\frac{n(1+L^Ae^{-cn})}{L}(\log L)
$$
for $L$ sufficiently large.  Setting $L=\exp(nc_1/A)$ with $c_1<c$ a
positive number, gives
$$
\proba\big(X_n \in Y(k) \big) \ll e^{-cn} +
n^2 e^{-c_1n/A} + n^2 e^{-(c-c_1)n} e^{-c_1 n/A}
$$ 
for $n$ large enough.  So there is a $c_2>0$ such that $\proba\big(X_n
\in Y(k) \big) \ll e^{-c_2n}=(e^{-c_2})^n$ for all $n\geq 1$.
\par
Now we come to (i). Let $\G_1$ be the derived group of $\G$, $\T$ the
connected component of the center of $\G$, so that we have a
surjective product map
$$
\G_1\times \T\ra \G
$$
with finite fibers. We are going to reduce the question to one on
$\G_1\times \T$. For this, we observe that there exists a fixed number
field $k_S/k$ and elements $(x_s, z_s)\in (\G_1\times \T)(k_S)$ for
all $s\in S$ such that
$$
s=x_sz_s
$$
for all $s$. We can then write $X_n=Z_n\tilde{X}_n$ with random
variables 
$$
\tilde{X}_n=x_{\xi_1}\cdots x_{\xi_n},\quad\quad
Z_n=z_{\xi_1}\cdots z_{\xi_n}
$$
taking values in $(\G_1\times \T)(k_S)$, which form random walks on
(subgroups of) $\G_1(k_S)$ and $\T(k_S)$.
\par
Now let $\tilde{Y}$ be the inverse image of $Y$ in $\G_1\times \T$. If
the projection of $\tilde{Y}$ on $\G_1$ is not dense, say it is
contained in a proper (conjugacy-invariant) subvariety $Y_1\subset
\G_1$, the condition $X_n\in Y(k)$ implies $\tilde{X}_n\in Y_1(k_S)$,
which occurs with probability tending to $0$ by applying (ii) (with
$k$ replaced by $k_S$). Otherwise, the projection of $\tilde{Y}$ on
the torus $\T$ must be contained in a proper subvariety, say $Y_2$,
and the condition $X_n\in Y(k)$ implies $Z_n\in Y_2(k_S)$. In fact,
$Z_n$ lies in the finite rank abelian group generated by the $z_s$ in
$\T$ (in fact, $\T(k_S)$ is itself of finite rank, by the generalized
Dirichlet unit theorem, see, e.g.,~\cite[Cor. 1,
p. 209]{platonov-rapinchuk}), and we are therefore reduced to a
question that can be handled by more classical sieve methods, for
instance by the large sieve on $\Zz^n$, as described in~\cite[\S
4.2]{lsieve}. Using reductions modulo primes (of $k_S$) and the
Lang-Weil estimate for $Y_2$ to estimate the number of permitted
residue classes for $Y_2(k_S)$, we obtain the qualitative estimate
(i).
\end{proof}

\begin{remark}
We used the sieve result of the previous section, but one could also
deal with this by selecting a single well-chosen prime ideal. We also
see clearly that (i) could be replaced, with some work, by an estimate
of quantitative decay, which would however only be of the type
$n^{-c}$ for some fixed $c>0$. 
\end{remark}

The following proposition, which is given for semisimple groups, will
be key in the proof of Theorem~\ref{th-main2}.  The proof follows the
same basic principle as earlier works using the large sieve to study
probabilistic Galois theory: the sieve implies that Frobenius elements
in $\Gal(k_{X_n}/k)$ can be found (with very high probability) that
map to any given conjugacy class of $W(\G)$ under the injective
homomorphism of Section~\ref{SS:finite fields}, and we can then use
the well-known lemma of Jordan according to which, in a finite group,
no proper subgroup contains elements of all conjugacy classes.

\begin{proposition}\label{pr-main}  
  Fix notation and assumptions as above, and assume that $\G$ is
  semisimple.  Let $K\subseteq \kbar$ be a finite extension of $k_\G$.
  Then there exists a constant $c>1$ such that
\[
\proba\big( \Gal(K k_{X_n}/K)\cong W(\G) \big) = 1 +O(c^{-n}).
\]
\par
The constant $c$ and the implicit constant depend only on the group
$\G$, the generating set $S$, the distribution of the $\xi_n$, and the
field $K$.
\end{proposition}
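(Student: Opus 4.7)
The plan is to combine Jordan's lemma with the large sieve of Corollary~\ref{cor-sieve-semisimple} and the refined equidistribution statement of Proposition~\ref{P:Carter approximation2}. Under the assumption that $K\supseteq k_\G$ and that $X_n$ is semisimple and regular in $\G$, Lemma~\ref{L:in Weyl group} places $\varphi_{X_n}(\Gal(\kbar/K))$ inside $W(\G)$, while Lemma~\ref{lm-tg-dg}(iii) identifies this image with $\Gal(K k_{X_n}/K)$. By Jordan's lemma (no proper subgroup of a finite group meets every conjugacy class), it will then suffice to prove that, with probability $1-O(c^{-n})$, this image meets every conjugacy class $C\in W(\G)^\sharp$.

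For each such $C$, I will set up the sieve with
\[
\Omega_{\ideal{p},C}=\{g\in \G(\FF_{\ideal{p}})_{sr}\,:\,\theta(g)=C\}
\]
for primes $\ideal{p}\notin R$ that split completely in $K$ (so that $\G_{\FF_{\ideal{p}}}$ is split and some representative of $\Frob_{\ideal{p}}$ lies in $\Gal(\kbar/K)$), and $\Omega_{\ideal{p},C}=\emptyset$ otherwise; the set $R$ is chosen large enough to absorb the conditions of Corollary~\ref{cor-sieve-semisimple}, Proposition~\ref{pr-sc}, the bad primes for the model and the ramified primes in $K/k$. The sets $\Omega_{\ideal{p},C}$ are $\G(\FF_{\ideal{p}})$-conjugation invariant by construction. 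By Proposition~\ref{pr-sc}, each $\gamma\Gamma^{sc}_{\ideal{p}}$ is a coset of $\varphi_{\ideal{p}}(\G^{sc}(\FF_{\ideal{p}}))$ in $\G(\FF_{\ideal{p}})$, so Proposition~\ref{P:Carter approximation2} yields
\[
\frac{|\Omega_{\ideal{p},C}\cap\gamma\Gamma^{sc}_{\ideal{p}}|}{|\Gamma^{sc}_{\ideal{p}}|}=\frac{|C|}{|W(\G)|}\bigl(1+O(N\ideal{p}^{-1})\bigr),
\]
and Chebotarev's theorem applied to $K/k$ then gives $V_\gamma\gg L/\log L$ uniformly in $\gamma\in\mathcal{C}$.

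Applying Corollary~\ref{cor-sieve-semisimple} and choosing $L=e^{\alpha n}$ for sufficiently small $\alpha>0$ bounds the probability of missing $\Omega_{\ideal{p},C}$ at every $\ideal{p}$ with $N\ideal{p}\leq L$ by $O(e^{-c_C n})$. A union bound over the finitely many classes $C$, combined with Lemma~\ref{L:closed}(ii) to discard the event that $X_n$ is not semisimple and regular, produces an event of probability $1-O(c^{-n})$ on which, for every $C$, some admissible $\ideal{p}$ satisfies $\pi_{\ideal{p}}(X_n)\in\Omega_{\ideal{p},C}$. Proposition~\ref{P:Frobenius comparison 1} then furnishes a Frobenius $\sigma_{\ideal{p}}$, which by complete splitting of $\ideal{p}$ in $K$ can be taken in $\Gal(\kbar/K)$, with $\varphi_{X_n}(\sigma_{\ideal{p}})\in C$; Jordan's lemma forces $\varphi_{X_n}(\Gal(\kbar/K))=W(\G)$, completing the proof. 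The main difficulty I anticipate is simply coordinating the two conjugation-invariance requirements: one must sieve \emph{across the cosets} of $\Gamma^{sc}$ (which is why the refined form Proposition~\ref{P:Carter approximation2} is indispensable, rather than its ``total'' counterpart Proposition~\ref{P:Carter approximation}), while simultaneously restricting the sieve support to primes splitting completely in $K$, so that Chebotarev still supplies a positive density of useful primes.
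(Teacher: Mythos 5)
Your proposal is correct and follows essentially the same route as the paper's proof: sieve over the primes splitting completely in $K$ with the sets $\{g\in\mathcal{G}(\Fp_{\ideal{p}})_{sr}:\theta_{\ideal{p}}(g)=C\}$, use Proposition~\ref{P:Carter approximation2} coset by coset to get $V_\gamma\gg L/\log L$ via Chebotarev, transfer the local Frobenius data by Proposition~\ref{P:Frobenius comparison 1}, and finish with Jordan's lemma together with Lemma~\ref{L:closed} applied to the exceptional subvariety of Lemma~\ref{lm-tg-dg}(iii). The only detail worth flagging is that the paper first enlarges $K$ (harmlessly, by Lemma~\ref{lm-tg-dg}) so that a fixed maximal torus splits over $K$, which is what guarantees that complete splitting of $\ideal{p}$ in $K$ puts $\ideal{p}$ in the set $\calP$ where $\theta_{\ideal{p}}$ is defined.
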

  
 \begin{proof}
   Fix a maximal torus $\T_0$ of $\G$.  By Lemma~\ref{lm-tg-dg}, the
   group $\Gal(k_\G k_{X_n}/k_\G)$ is isomorphic to a subquotient of
   $W(\G)$.  So without loss of generality, we may extend $K$ so that
   $\T_{0,K}$ is split.  Choose a semisimple group scheme
   $\mathcal{G}$ over $\Zz_k[R^{-1}]$ whose generic fiber is $\G$
   where $R$ is a finite set of maximal ideals of $\Zz_k$.  Let
   $\calT_0$ be the Zariski closure of $\T_0$ in $\calG$.  By taking
   $R$ large enough, we may assume that $\calT_0$ is a maximal torus
   of $\calG$ and $\Gamma\subseteq \mathcal{G}(\Zz_k[R^{-1}])$.  For
   $\p\notin R$, let $\pi_{\p}\colon \Gamma \to \mathcal{G}(\Fp_{\p})$
   be the homomorphism of reduction modulo $\p$.
\par
Let $\mathcal{P}$ be the set of maximal ideals $\p\notin R$ of $\Zz_k$
that split completely in $K$.  For $\p\in \mathcal{P}$, the tori
$\calT_{0,k_\p}$ and $\calT_{0,\FF_\p}$ are split.  The set $\calP$
has positive natural density, by the Chebotarev density theorem (see,
e.g.,~\cite[p. 143]{analyticnumbertheory}.)  For each $\p\in
\mathcal{P}$, fix an embedding $\kbar\hookrightarrow \kbar_\p$ which
is the identity map on $k$.  Let $W(\G)^\sharp \leftrightarrow
W(\mathcal{G}_{\Fp_{\p}})^\sharp$ be the bijection (\ref{E:conjugacy
  bijection 2}); we will use it as an identification.
\par
For $\p \in \mathcal{P}$, we can define a map
\begin{align*}
  \theta_{\p} \colon \mathcal{G}(\Fp_{\p})_{sr} & \to
  W(\mathcal{G}_{\Fp_{\p}})^\sharp =W(\G)^\sharp
\end{align*}
as in \S\ref{SS:finite fields}.  Fix a conjugacy class $C\in
W(\G)^\sharp$.  For $\p\in \calP$, define the set
\[
\Omega_{\p} :=  \big\{ g \in
\mathcal{G}(\Fp_{\p})_{sr} :  \theta_{\p}(g)=C
\}.
\]
\par
Let $\G(k)_{sr}$ be the set of $g\in \G(k)$ that are semisimple and
regular in $\G$.  For $X_n\in \G(k)_{sr}$, we have defined a
representation $\varphi_{X_n}\colon \Gal(\kbar/k) \to \Pi(\G)$ that is
uniquely defined up to conjugation by an element of $W(\G)$.  Fix a
prime $\p\in \calP$ such that $\pi_{\p}(X_n) \in \Omega_{\p}$.  By
Proposition~\ref{P:Frobenius comparison 1}, $\varphi_{X_n}$ is
unramified at $\p$ and the conjugacy class of the Frobenius
automorphism $\Frob_{\p}$ at $\p$ is $C$, i.e.,
$\varphi_{X_n}(\Frob_{\p})=C$.  
\par
Since $\p$ splits completely in $K$, we deduce  that
$$
\varphi_{X_n}(\Gal(\kbar/K))\cap C \neq \emptyset,
$$
and therefore, we have an upper bound
\begin{align} \label{E:pC stuff} \proba\big(X_n\in \G(k)_{sr}
  \text{ and} &\text{ $\varphi_{X_n}(\Gal(\kbar/K))\cap C =
    \emptyset$}\big)\\
 & \leq \proba\bigl( \pi_{\ideal{p}}(X_n) \notin
  \Omega_{\ideal{p}}\text{ for all } \ideal{p} \in \mathcal{P} \text{
    with } N\ideal{p}\leq L\bigr),
\notag
\end{align}
where the last probability is amenable to sieve.
Specifically, applying~(\ref{eq-sieve-semisimple}), we derive the
upper bound
\[
\proba\bigl( \pi_{\ideal{p}}(X_n) \notin
\Omega_{\ideal{p}}\text{ for all } \ideal{p} \in \mathcal{P} \text{
  with } N\ideal{p}\leq L\bigr)
\leq Be^{-cn}+ n(1+L^Ae^{-cn}) \sum_{\gamma\in \mathcal{C}}\frac{1}{V_{\gamma}},
\]
where $A\geq 0$, $B\geq 0$ and $c>0$ are constants depending only on
$k$, $\mathcal{C}=\Gamma^{sc}\backslash\Gamma$, and
\[
V_{\gamma}:=\sum_{\stacksum{N\ideal{p}\leq L}{\ideal{p}\in \mathcal{P}}}{
\frac{|\Omega_{\ideal{p}}\cap \gamma \Gamma^{sc}_{\ideal{p}}|}
{|\Gamma^{sc}_{\ideal{p}}|}
}.
\]
By Proposition~\ref{P:Carter approximation2}, we have
\[
\frac{|\Omega_{\ideal{p}}\cap \gamma \Gamma^{sc}_{\ideal{p}}|}
{|\Gamma^{sc}_{\ideal{p}}|} = \frac{|C|}{|W(\G)|} + O(N(\p)^{-1})
\]
for all $\gamma\in \mathcal{C}$ and $\p \in \mathcal{P}$, where the
implicit constant does not depend on $\p$.  This implies that
\[
V_\gamma \gg \sum_{\stacksum{N\ideal{p}\leq L}{\ideal{p}\in
    \mathcal{P}}}
 \Big(  \frac{|C|}{|W(\G)|} + O\big(N(\p)^{-1}\big) \Big) \gg L/\log L
\]
where the last inequality holds for all $L$ sufficiently large (since
$\mathcal{P}$ has positive density).  Therefore,
$$
\proba\bigl( \pi_{\ideal{p}}(X_n) \notin
\Omega_{\ideal{p}}\text{ for all } \ideal{p} \in \mathcal{P} \text{
  with } N\ideal{p}\leq L\bigr)
\ll e^{-cn}+ \frac{n(1+L^Ae^{-cn})}{L}(\log L).
$$
for all $L$ sufficiently large.  As at the end of the proof of
Lemma~\ref{L:closed}, we obtain
$$
\proba\bigl( \pi_{\ideal{p}}(X_n) \notin \Omega_{\ideal{p}}\text{ for
  all } \ideal{p} \in \mathcal{P} \text{ with } N\ideal{p}\leq L\bigr)
\ll c^{-n}
$$ 
for some $c>1$.

So from (\ref{E:pC stuff}), we find that
\[
\proba\big(X_n\in \G(k)_{sr} \text{ and
  $\varphi_{X_n}(\Gal(\kbar/K))\cap C = \emptyset$}\big) \ll c^{-n}
\]
for some constant $c>1$, which we may assume holds for all $C\in
W(\G)^\sharp$.  By Jordan's lemma, no proper subgroup of $W(\G)$
intersects every conjugacy class of $W(\G)$.  Therefore,
\begin{multline*}
  \proba\big(X_n \in \G(k)_{sr} \text{ and }
  \varphi_{X_n}(\Gal(\kbar/K))\neq W(\G)\big) \leq \\
\sum_{C\in
    W(\G)^\sharp}\proba (X_n \in \G(k)_{sr} \text{ and }
  \varphi_{X_n}(\Gal(\kbar/K))\cap C=\emptyset)\ll c^{-n}.
\end{multline*}
\par
Now let $Y$ be the subvariety of $\G$ from Lemma~\ref{lm-tg-dg}(iii).
By Lemma~\ref{L:closed}, we have 
$$
\proba(X_n\in Y(k))\ll c^{-n},
$$
(after possibly increasing $c>1$).  If $X_n \notin Y(k)$ and
$\varphi_{X_n}(\Gal(\kbar/K))= W(\G)$, then $X_n$ is regular and
semisimple in $\G$ and $\Gal(Kk_{X_n}/K) \cong W(\G)$.  Therefore,
\[
\proba\big( \Gal(Kk_{X_n}/K) \not \cong W(\G) \big)  \ll c^{-n}
\]
for some constant $c>1
$.
\end{proof}

\subsection{Proof of Theorem~\ref{th-main2}}

We first consider the case where $\G$ is reductive.  Let $R(\G)$ be
the \emph{radical} of $\G$.  Since $\G$ is reductive, $R(\G)$ is the
connected component of the center of $\G$.  The quotient
$\G':=\G/R(\G)$ is defined over $k$ and is semisimple. Let $\pi\colon
\G \to \G'$ be the quotient homomorphism.

We now consider $\Gamma'\subseteq \G'(k)$, the image of $\Gamma$ under
$\pi$, and the generating set $S'$ which is the image of $S$.  The
pair $(\Gamma',S')$ is still balanced. The group $\Gamma'$ is Zariski
dense in $\G'$ since $\Gamma$ is Zariski dense in $\G$ and $\pi$ is
surjective; again because $\pi$ is surjective, and $\Gamma$ is
arithmetic, we find that $\Gamma'$ is an arithmetic subgroup of $\G'$.

To the random walk $(X_n)$ on $\Gamma$, we can associate the random
walk $(X_n')$ on $\Gamma'$ where $X_n'=\pi(X_n)$. It is a
left-invariant random walk defined by the sequence of steps $(\xi_n')$
where $\xi_n'=\pi(\xi_n)$. Each $\xi_n'$ takes values in the symmetric
generating system $S'$ of $\Gamma'$ and has distribution
$$
\proba(\xi_n'=s')=
\sum_{s\in S,\, \pi(s)=s'}{p(s)}
$$
for $s' \in S'$.  We have
$\proba(\xi_n'=s')=\proba(\xi_n'=(s')^{-1})>0$ for all $s'\in S'$, and
the random variables $(\xi_n')$ are independent and identically
distributed.  

\begin{lemma} \label{L:kX inclusion}
We have $k_{X_n'} \subseteq k_{X_n}$.
\end{lemma}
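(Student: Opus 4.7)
The plan is to reduce to the case where $g = X_n$ is semisimple, and then compare the descriptions of $k_g$ and $k_{g'}$ afforded by Lemma~\ref{lm-tg-dg}(ii). Writing $g' = X_n' = \pi(g)$, I first note that $k_h = k_{h_s}$ for any element $h$ and that the Jordan decomposition is compatible with the homomorphism $\pi$, so that $g'_s = \pi(g_s)$. Hence it suffices to prove $k_{\pi(g)} \subseteq k_g$ for semisimple $g \in \G(k)$.

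Next, I would choose a maximal torus $\T$ of $\G_{\bar{k}}$ containing $g$. Since $\G$ is reductive, its radical $R(\G)$ is a central torus and is therefore contained in every maximal torus; in particular $R(\G)_{\bar{k}} \subseteq \T$. It follows that $\T' := \pi(\T) = \T/R(\G)_{\bar{k}}$ is a maximal torus of $\G'_{\bar{k}}$ (maximal tori correspond under the quotient), and it contains $g'$. The crucial observation is that pullback along $\pi|_\T \colon \T \to \T'$ yields an injection of character groups
\[
X(\T') \hookrightarrow X(\T),\qquad \chi' \mapsto \chi' \circ \pi|_\T,
\]
and that for every $\chi' \in X(\T')$ one has $\chi'(g') = (\chi' \circ \pi|_\T)(g)$.

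Applying Lemma~\ref{lm-tg-dg}(ii) to $\G$ (with the torus $\T$ and element $g$) and to $\G'$ (with the torus $\T'$ and element $g'$) then yields
\[
k_{g'} = k\bigl(\{\chi'(g') : \chi' \in X(\T')\}\bigr) \subseteq k\bigl(\{\chi(g) : \chi \in X(\T)\}\bigr) = k_g,
\]
which is the claim. There is no real obstacle here: the only mild subtlety is to use matched tori $\T$ and $\T' = \pi(\T)$ on the two sides, and this is automatic once one has noted that the image of a maximal torus under $\pi$ is again maximal because $R(\G) \subseteq \T$.
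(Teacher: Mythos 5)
Your proposal is correct and follows essentially the same route as the paper: reduce to semisimple $g$, pass to the matched tori $\T$ and $\T'=\T/R(\G)$, pull back characters along $\pi|_\T$, and conclude via Lemma~\ref{lm-tg-dg}(ii). The only cosmetic point is that to invoke Lemma~\ref{lm-tg-dg}(ii) you should take $\T$ to be a maximal torus of $\G$ defined over $k$ (as the paper's conventions require), rather than merely a maximal torus of $\G_{\bar k}$.
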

\begin{proof}
  More generally, we claim that $k_{\pi(g)}\subseteq k_g$ for all
  $g\in \G(k)$.  Without loss of generality, we may assume that $g$,
  and hence $\pi(g)$, is semisimple.  Let $\T$ be a maximal torus of
  $\G$ containing $g$.  The torus $\T':=\T/R(\G)$ is then a maximal
  torus of $\G'$ which contains $\pi(g)$.  The homomorphism $X(\T')\to
  X(\T),\, \chi' \mapsto \chi'\circ \pi$, gives an inclusion
\[
\{\chi'(\pi(g)) : \chi' \in X(\T')\} \subseteq \{\chi(g):\chi \in X(\T)\}.
\]
By Lemma~\ref{lm-tg-dg}(ii), we deduce that $k_{\pi(g)}\subseteq k_g$.
\end{proof}

Fix a finite extension $K$ of $k$ that contains $k_{\G}$ and
$k_{\G'}$.  Suppose that $\Gal(K k_{X_n'}/K)\cong W(\G')$.  By
Lemma~\ref{L:kX inclusion}, we have $|\Gal(K k_{X_n}/K)| \geq
|W(\G')|$.  Since $\G$ and $\G'$ have isomorphic Weyl groups, we have
$|\Gal(K k_{X_n}/K)| \geq |W(\G)|$.  By Lemma~\ref{lm-tg-dg}(i),
$\Gal(K k_{X_n}/K)$ is isomorphic to a subquotient of $W(\G)$, so by
cardinality considerations we find that $\Gal(K k_{X_n}/K)\cong
W(\G)$.

Therefore, 
\[
\proba\big( \Gal(K k_{X_n'}/K)\cong W(\G') \big) \leq \proba\big(
\Gal(K k_{X_n}/K)\cong W(\G) \big)
\]
\par
Since $\G'$ is semisimple, Proposition~\ref{pr-main} implies that
$$
\proba\big( \Gal(K k_{X_n'}/K)\cong W(\G') \big)=1+O(c^{-n})
$$
for some constant $c>1$, and therefore
$$
\proba\big( \Gal(Kk_{X_n}/K)\cong W(\G) \big)=1+O(c^{-n}).
$$
\par
Since $\Gal(k_{\G} k_{X_n}/k_\G)$ is always isomorphic to a
subquotient of $W(\G)$ by Lemma~\ref{lm-tg-dg}(i), we deduce that
$$
\proba\big( \Gal(k_\G k_{X_n}/k_\G)\cong W(\G) \big)=1+O(c^{-n}),
$$
and this completes the proof of (iii) in the reductive case.

Let $Y$ be the subvariety of $\G$ from Lemma~\ref{lm-tg-dg}(iii).  Fix
$g\in \G(k)-Y(k)$ such that 
$$
\Gal(k_\G k_g/k_\G)\cong W(\G).
$$
\par
We claim that $\Gal(k_g/k)\cong \Pi(\G)$. Since $g\notin Y(k)$, $g$ is
contained in a unique maximal torus $\T$ of $\G$ and $k_g=k_{\T}$.  It
thus suffices to show that $\Gal(k_\T/k)\cong \Pi(\G)$.  The
homomorphism 
$$
\Gal(k_\T/k) \xhookrightarrow{\varphi_\T} \Pi(\G)\to
\Pi(\G)/W(\G)
$$ 
is surjective and $\varphi_\T(\Gal(k_T/k_\G))\subseteq W(\G)$, so it
suffices to show that $\Gal(k_T/k_\G)\cong W(\G)$.  But since
$k_\T\supseteq k_\G$, we have $\Gal(k_\T/k_\G)=\Gal(k_\G
k_g/k_\G)\cong W(\G)$ as desired.  Therefore
\[
\proba(\Gal(k_{X_n}/k)\not\cong \Pi(\G)) \leq \proba(\Gal(k_\G
k_{X_n}/k_\G)\not\cong W(\G)) + \proba(X_n \in Y(k)).
\]
\par
By part (iii), which we have already proved, we have
\[
\proba(\Gal(k_{X_n}/k)\not\cong \Pi(\G)) \ll c^{-n} + \proba(X_n \in Y(k))
\]
for some constant $c>1$.  Part (i) and (ii) in the reductive case then
follow immediately from Lemma~\ref{L:closed}.
\par
Finally, we consider the case where $\G$ is not reductive.  The
quotient $\G':=\G/R_u(\G)$ is defined over $k$ and is reductive. Let
$\pi\colon \G \to \G'$ be the quotient homomorphism.  As above, we can
consider the arithmetic subgroup $\Gamma':=\pi(\Gamma)$ of $\G'(k)$
and the related random walk $(X_n')$ on $\Gamma'$ where
$X_n'=\pi(X_n)$.  By Lemma~\ref{L:kg facts}, we have
$k_{X_n}=k_{X_n'}$.  The non reductive case then follows directly from
the reductive case.

\section{Comments on other approaches}
\label{sec-comments}

One may wonder about our use of random walks to quantify the
maximality principle for splitting fields, and it is natural to see
why it is interesting, and what other  approaches to ``random''
elements are possible. 
\par
These are essentially of two kinds: one could try to prove upper
bounds for the density
$$
\frac{|\{g\in \Gamma\,\mid\, \|\iota(g)\|\leq X\text{ and
    $\det(T-\iota(g))$ has ``small'' Galois group}\}|}
{|\{g\in \Gamma\,\mid\, \|\iota(g)\|\leq X\}|},
$$
as $X$ grows, where $\iota$ denotes a fixed faithful representation of 
$\G$ into some $GL(n)$ and $\|g\|$ is (say) the Hilbert-Schmidt norm on
$\GL(n,\Cc)$. Or one could still use the system of generators $S$ but
try to bound
$$
\frac{|\{g\in \Gamma\,\mid\, \ell_S(g)\leq X\text{ and
    $\det(T-\iota(g))$ has ``small'' Galois group}\}|}
{|\{g\in \Gamma\,\mid\, \ell_S(g)\leq X\}|},
$$
where $\ell_S(g)$ is the combinatorial distance on $\Gamma$ defined by
$S$. The sieve techniques can potentially extend to these situations,
but one needs to know good equidistribution properties for reduction
modulo primes in these two types of balls, uniformly and
quantitatively. The uniformity will ultimately depend on the spectral
gap property of $\Gamma$ (i.e., on Property $(\tau)$), but due to the
relations in the group, it is not so easy to derive from it the
required equidistribution, in the combinatorial case (one would need
to do it in each coset of $\Gamma^{sc}$, of course). In the
archimedean case, this has very recently been implemented along these
lines by Gorodnik and Nevo~\cite{gn1}, using their deep
ergodic-theoretic equidistribution results~\cite{gn2}.
\par
Moreover, in comparison with these two other approaches, random walks
have one interesting feature: they lend themselves readily to concrete
computations, and in this respect can be pretty efficient. This is
illustrated, in the earlier paper~\cite{jkze8}, by the fairly small
size of the polynomial $P$ with Galois group $W(\Ee_8)$ that we
obtained, especially if the corresponding element of $\Ee_8(\Qq)$ is
expressed as a product of standard Steinberg generators $x_1$, \ldots,
$x_8$: we have simply
$$
P=\det(T-\Ad(x_1\cdots x_8x_1^{-1}\cdots x_8^{-1}))/(T-1)^8\in\Zz[T].
$$
\par
In other words, the complexity of the polynomial (if not of the
splitting field, in terms of usual algebraic invariants such as the
discriminant of the ring of integers, which is difficult to control)
is fairly directly related to the length of the walk.
\par
Another point is that random walks enable us to state some
corollaries, and ask some questions, which do not make sense for other
meanings of ``random'' elements. For instance, given the random walk
$(X_n)$ as in Theorem~\ref{th-main2}, it follows (in the semisimple
case) from the Borel-Cantelli Lemma that, almost surely, there are
only finitely many $n$ for which $\Gal(k_{X_n}/k)$ is not isomorphic
to $W(\G)$. We can then ask how the random variables
\begin{align*}
\tau&=\min\{n\geq 1\,\mid\, \Gal(k_{X_n}/k)=W(\G)\},\\
\tau^*&=\max\{n\geq 1\,\mid\, \Gal(k_{X_n}/k)\not= W(\G)\}
\end{align*}
are distributed?

\end{document}